\newtheorem{theorem}{Theorem}[section]
\newtheorem{lemma}{Lemma}[section]
\newtheorem{corollary}{Corollary}[section]
\newtheorem{proposition}{Proposition}[section] 
\newtheorem{definition}{Definition}[section]
\newtheorem{remark}{Remark}[section]
\newtheorem{example}{Example}[section]
\renewcommand{\arraystretch}{1.2}
\newtheorem{alg}{Algorithm}
\def\ba{\begin{array}}
\def\ea{\end{array}}
\def\beq{\begin{equation}}
\def\eeq{\end{equation}}
\def\bea{\begin{eqnarray}}
\def\eea{\end{eqnarray}}
\def\beann{\begin{eqnarray*}}
\def\eeann{\end{eqnarray*}}
\def\R{\mathbb{R}}
\def\Z{\mathbb{Z}}
\def\dist{\textup{dist}}
\def\inte{\textup{int}}
\def\range{\textup{range}}
\def\argmin{\textup{argmin}}
\def\rec{\textup{rec}}
\def\exp{\textup{exp}}
\newif \ifcomment 
\title{\bf Status Determination by Interior-Point Methods for Convex Optimization Problems in Domain-Driven Form}
\author{
Mehdi Karimi \and Levent Tun\c{c}el}
\thanks{* Some of the material in this manuscript appeared in a preliminary form in Karimi's PhD thesis \cite{karimi_thesis}. \\
Mehdi Karimi (m7karimi@uwaterloo.ca) and Levent Tun\c{c}el (ltuncel@math.uwaterloo.ca): Department of
Combinatorics and Optimization, University
of Waterloo, Waterloo, Ontario N2L 3G1, Canada. Research of the
authors was supported in part by Discovery Grants from NSERC and by U.S. Office of Naval Research under award numbers: N00014-12-1-0049, N00014-15-1-2171 and N00014-18-1-2078.}
\begin{document}
\begin{abstract}
We study the geometry of convex optimization problems given in a Domain-Driven form and categorize possible statuses of these problems using duality theory. Our duality theory for the Domain-Driven form, which accepts both conic and non-conic constraints, lets us determine and certify statuses of a problem as rigorously as the best approaches for conic formulations (which have been demonstrably very efficient in this context).  
 We analyze the performance of an infeasible-start primal-dual algorithm for the Domain-Driven form in returning the certificates for the defined statuses. 
Our iteration complexity bounds for this more practical Domain-Driven form match the best ones available for conic formulations. 
At the end, we propose some stopping criteria for practical algorithms based on insights gained from our analyses. 
\end{abstract}
\maketitle

\pagestyle{myheadings} \thispagestyle{plain}
\markboth{KARIMI and TUN{\c C}EL}
{Status Determination for Domain-Driven Formulations}



\section{Introduction} \label{introduction}
In this article, we are interested in \emph{convex optimization} as minimizing a \emph{convex function} over a closed \emph{convex set} in a finite dimensional Euclidean space. Without loss of generality, we may assume that the objective function is linear, then,  an instance $(P)$ of a convex optimization problems can be written as 
\begin{eqnarray} \label{main-p}
(P) \ \ \ \inf _{x} \{\langle c,x \rangle :  Ax \in D\},
\end{eqnarray}
where $x \mapsto Ax : \R^n \rightarrow \R^m$ is a linear embedding, $A$ and $c \in \mathbb \R^n$ are given, and $D \subset \mathbb \R^m$ is a closed convex set. For problem $(P)$, there are four possible statuses:
\begin{itemize}
\item having an optimal solution, 
\item having a finite optimal value, but no optimal solution, 
\item being unbounded (for every $M \in \R$, there exists a feasible solution with objective value strictly better than $M$),
\item being infeasible. 
\end{itemize}
The second status above cannot happen for the linear programming (LP) special case, which shows that determination of the statuses for a general convex optimization problem is more complicated than that for LP. 
A dual problem can be assigned to every $(P)$ and the above four statuses are also possible for this dual, hinged on the primal ones. The connection between the primal and dual problems  lets us provide verifiable \emph{certificates} for the statuses. For a discussion on the possible status patterns in the context of duality, see \cite{interior-book}-Section 4.2.2.\ or \cite{luo1996duality,MR1811754}. 
We can split the instances of convex optimization problems into ``well-posed", which means there exists an $\epsilon >0$ such that every $\epsilon$-perturbation\footnote{$\epsilon$-perturbation of $c$ means replacing $c$ by $c' \in \R^n$ where $\|c-c'\| \leq \epsilon$ (similarly for $A$), and $\epsilon$-perturbation of $D$ means shifting it by a vector $b\in \R^m$ with $\|b\| \leq \epsilon$.} of $(c,A,D)$ has the same status as that of $(c,A,D)$, or ``ill-posed", which means there exists an arbitrarily small perturbation of $(c,A,D)$ that changes the status of the instance. Renegar proved that (\cite{renegar1994possible}-Theorem 1.4.1) for ``semi-algebraic problems", it is impossible to know whether a given instance is ill-posed if one is using error measurement functions that are also semi-algebraic. Renegar  also developed complexity results based on the distance of the problem to being ill-posed \cite{renegar1994some, renegar1995incorporating}. While being aware of the above fundamental results, practical and theoretical approaches for solving convex optimization problems, given an input data set,  should return a status and a certificate, and must strive to determine these two as rigorously as possible.

After categorizing the statuses for a formulation, the next question is how does an algorithm determine the status of a given problem instance, in theory and in practice. Iterative algorithms initiate the process of solving a problem from a \emph{starting point}. 
In the development of most of the theory of such algorithms, a large portion of the literature has focused on feasible-start case (i.e. a feasible solution is assumed available to start the algorithm).  For most of the theoretical development, this is sufficient since one can employ a feasible-start algorithm in a two-phase approach or in other standard approaches. 
 For some applications, such as the recently popular implementation of interior-point methods in designing fast algorithms for combinatorial problems \cite{christiano2011electrical, cohen2017negative}, an obvious feasible solution is available. However, in general applications of and software for convex optimization, infeasible-start algorithms are essential. 
We review some of the infeasible-start interior-point methods for LP and general conic optimization in Section \ref{sec:review}. 

Definition of possible statuses for a given problem should depend on the formulation being used, and mostly on the way the underlying convex set is given. In this article, we consider the \emph{Domain-Driven} form for convex optimization \cite{karimi_arxiv,karimi_thesis, cone-free}, which is the form in \eqref{main-p} where  $D$ is
defined as the closure of the domain of a given $\vartheta$-\emph{self-concordant (s.c.) barrier} $\Phi$ \cite{interior-book}. Since every open convex set is the domain of a s.c.\ barrier \cite{interior-book}, in principle, every convex optimization problem can be treated in the Domain-Driven setup. 
The Domain Driven form is introduced to extend many desirable properties of primal-dual interior-point techniques available for conic optimization to a form that does not require all nonlinear constraints to be cone constraints \cite{karimi_arxiv}.
In applications, the most restrictive part of the modern interior-point approach is that a ``computable"\footnote{Computable means we can evaluate the function and its first and second derivatives at a reasonable cost.} s.c.\ barrier may not be available. However, as discussed in \cite{karimi_arxiv}, many problems that arise in practice can be handled by the Domain-Driven formulation. The examples presented in \cite{karimi_arxiv} are optimization over (1) symmetric cones (LP, SOCP, and SDP), (2) direct sums of an arbitrary collection of 2-dimensional convex sets defined as the epigraphs of univariate convex functions (including as special cases, geometric programming \cite{boyd2007tutorial} and entropy programming), (3) epigraph of relative entropy and vector relative entropy, (4) epigraph of a matrix norm (including as a special case, minimization of nuclear norm over a linear subspace), (5) epigraph of quantum entropy, and (6) any combination of all the above examples. 

\subsection{Contributions of this paper}  
The first contribution of this paper is classifying the possible statuses for convex optimization problems in a Domain-Driven form as in Table \ref{tbl:statuses}.
\begin{table}  [h]
\centering
  \caption{Possible statuses for a problem in Domain-Driven form. }
  \label{tbl:statuses}
  \begin{tabular}{ |c | c | }
    \hline
     {\bf Infeasible}  & {\bf Feasible} \\ \hline
     \begin{minipage}[t]{0.5\textwidth}
     \vspace{.2cm}
     \begin{itemize}
\item Strongly infeasible \begin{itemize}
\item Strictly infeasible
\end{itemize}
\item Ill-posed
\end{itemize}
\end{minipage}
 & \begin{minipage}[t]{0.5\textwidth}
     \begin{itemize}
\item Strictly primal-dual feasible 
\item Strongly Unbounded \begin{itemize}
\item Strictly Unbounded 
\end{itemize}
\item Ill-posed
 \vspace{.2cm}
\end{itemize}
\end{minipage}
    \\ \hline 
  \end{tabular}
\end{table}
Then, we study the geometric properties of the problem in different statuses. In this part, we exploit some properties of the Legendre-Fenchel (LF) conjugates of s.c.\ barriers, which are more than an arbitrary s.c.\ function \cite{karimi_arxiv}. 

 Then we focus on the polynomial time infeasible-start path following algorithm PtPCA  designed in \cite{karimi_arxiv,karimi_thesis} (a summary of the results we need come in Section \ref{sec:summary}) and discuss how the output of this algorithm can be interpreted to determine the status of a given problem. 
We discuss the certificates this algorithm returns (heavily relying on duality theory) for each of these cases, and analyze the number of iterations required to return such certificates.  Our approach (and in general interior-point methods) returns more robust certificates in provably stronger (polynomial) iteration complexity bounds compared to first-order methods such as Douglas-Rachford splitting \cite{liu2017new}, at the price  of higher computational cost per iteration. However, as explained in \cite{karimi_arxiv}, the quasi-Newton type ideas for deriving suitable primal-dual local metrics in \cite{tunccel2001generalization, myklebust2014interior} can be used to make our algorithm scalable, while preserving some primal-dual symmetry. The rest of the article covers:
\begin{itemize}
\item Discussing the strictly primal and dual feasible case and the more general case where there exists a pair of primal-dual feasible points with zero duality gap,  and proving that the PtPAC algorithm returns an approximate solution (with a certificate) in polynomially  many iterations (Section \ref{sec:solvable}). 
\item Defining a weak detector, which returns $\epsilon$-certificates of infeasibility or unboundedness in polynomial time  (Section \ref{sec:weak}). 
\item Defining a strict detector, which returns exact certificates when the problem is strictly infeasible or strictly unbounded  (Section \ref{sec:strict}). 
\item Studying the performance of PtPCA algorithm for some ill-conditioned cases (Section \ref{sec:ill}).
\item Designing the stopping criteria of the PtPAC algorithm for practice based on the insights gained from the analyses of the statuses (Section \ref{sec:conclude}).
\end{itemize}
 Our iteration complexity results are comparable with the current best theoretical iteration complexity bounds for conic formulations (mostly  given in \cite{infea-2}), and are new for the infeasible-start models used, even in the very special case of LP. The algorithms designed in \cite{karimi_arxiv} together with the output analysis results of this article make up the foundation of new software DDS (Domain-Driven Solver) for convex optimization problems.

\subsection{Notations and assumptions}

As justified in \cite{karimi_arxiv}, we assume that the kernel of $A$ in \eqref{main-p} is $\{0\}$ and also the Legendre-Fenchel (LF) conjugate $\Phi_*$ of $\Phi$ is given.
The domain of $\Phi_*$ is the interior of a cone $D_*$ defined as:
 \begin{eqnarray} \label{eq:leg-conj-2} 
D_*:=\{y:  \langle y , h \rangle  \leq 0, \ \ \forall h \in \rec(D)\}, 
\end{eqnarray}
where $\rec(D)$ is the recession cone of $D$.
Consider an Euclidean vector space $\mathbb E$ with dual space $\mathbb E^*$ and a scalar product $\langle \cdot, \cdot \rangle$.  For a self-adjoint positive definite linear transformation $B: \mathbb E \rightarrow \mathbb E^*$, we define a conjugate pair of Euclidean norms as:
\begin{eqnarray} \label{norms}
\|x\|_B &:=& \left [ \langle Bx,x \rangle \right] ^{1/2},  \nonumber \\
\|s\|^*_B &:=& \max\{\langle s,y \rangle: \ \|y\|_{B} \leq 1\} = \|s\|_{B^{-1}}=  \left [ \langle s, B^{-1}s \rangle \right] ^{1/2}.
\end{eqnarray} 
By using this definition, we have a general Cauchy-Schwarz  inequality:
\begin{eqnarray} \label{eq:CS}
\langle s,x \rangle  \leq \|x\|_B \|s\|^*_B, \ \ \forall x \in \mathbb E, \forall s \in \mathbb E^*. 
\end{eqnarray}
The abbreviations  RHS and LHS stand for right-hand-side and left-hand-side, respectively.

\section{Review of infeasible-start approaches for LP and general conic optimization} \label{sec:review}
Several infeasible-start interior-point approaches have been considered for LP and many of them have been extended to general convex optimization. 
In this section, we review some of these approaches and their iteration complexity. 
Having a feasible-start algorithm, an obvious approach for handling infeasibility is using a two-phase method.  
This approach is not desirable in practice and many researchers and practitioners are interested in approaches that solve the problem in a single phase. 
Another popular approach is the big-M approach where we reformulate our problem by adding some ``big" constants in a way that solving the reformulation lets us solve the initial problem. Assume that we want to solve the LP problem $\min \{c^\top x : Ax=b, x \geq 0\}$, where $A \in \R^{m \times n}$. When the data $(A,b,c)$ are rational, let $L$ be the size of the given data in the LP (the number of bits required to store the given data). The big-M approach has been used in interior-point methods \cite{kojima1989polynomial, monteiro1989interior, kojima1993little, kojima1989primal, mcshane1989implementation,megiddo1989pathways}  to achieve $O(\sqrt{n} L)$ number of iterations for solving the problem.
 This approach has major practical issues: (1) It is not clear how large the constants must be chosen (constants that are provably large enough for good theoretical behavior are typically unnecessarily large in practice), and (2) Introducing very large artificial constants to data tends to make the problem, and/or linear systems that arise in computations, ill-conditioned.

An elegant way of designing and analyzing interior-point algorithms involve \emph{potential functions} (which can be used to measure the progress of the algorithm, to find good search directions, and to find good step sizes). The underlying family of algorithms are called \emph{potential reduction algorithms}. Mizuno, Kojima, and Todd designed an infeasible-start potential reduction algorithm \cite{mizuno1995infeasible} for LP. Their purely potential reduction algorithm achieves  $O(n^{2.5}L)$ iteration complexity bound and the bound can be improved to $O(nL)$ by adding some centering steps. Seifi and Tun\c{c}el  \cite{seifi1998constant} designed another infeasible-start potential reduction algorithm with iteration complexity bound $O(n^{2}L)$. 

As explained in \cite{karimi_arxiv}, our infeasible-start approach is in the middle of two scenarios based on the number of artificial variables. In the scenario closer to ours (see \cite{lustig1990feasibility,lustig1991computational,kojima1993primal,zhang1994convergence,zhang1998extending}), the systems we solve at every iteration are the same as the ones we solve in the feasible-start case except for a perturbed RHS, and there is no artificial variable in the formulation. These algorithms have been very popular since late 1980's \cite{lustig1991computational,lipsol} and been recently used for even non-convex infeasible-start setups \cite{hinder-1,hinder-2}. However, their complexity analysis has been challenging \cite{kojima1993primal,zhang1994convergence, mizuno1994polynomiality, todd2003detecting}, and  in the case of LP the best bound for some variations of the approach is $O(nL)$ \cite{mizuno1994polynomiality}. At the other extreme are the infeasible-start algorithms which form  a homogeneous self-dual embedding \cite{YTM,infea-2} by adding artificial variables and homogenization variables.  Using this formulation, Ye, Todd, and Mizuno \cite{YTM} achieved the $O(\sqrt{n}L)$ iteration complexity bound for LP.  If we use a feasible-start algorithm that returns a strictly (self-)complementary solution, we can immediately solve both of the primal and dual problems \cite{YTM}. Many algorithms based on the homogeneous self-dual embedding formulation have been designed and implemented, see for example \cite{xu1996simplified}.
In our infeasible-start approach, we do not impose an explicit homogenization and add only one artificial variable which is tied to the central path parameter. Our complexity results here are new for this approach, even in the case of LP where our iteration bound is $O(\sqrt{n}L)$.

Let us shift our focus from LP to conic optimization problems.  Let $K \subset \R^n$ be a pointed closed convex cone, $\hat A: \R^n \rightarrow \R^m$ be a linear embedding, and $\hat c \in \R^n$ and $\hat b \in \R^m$ be given.  We define a primal-dual conic optimization pair as\footnote{ We use a hat for the data and parameters in the conic formulation as $\hat c, \hat \tau, \ldots$  and keep $c, \tau, \ldots$ for the Domain-Driven form.}:
\begin{eqnarray} \label{intro-1}
\text{(P)} &\inf& \{ \langle \hat c , \hat z \rangle :  \hat A \hat z= \hat b , \hat z \in K\}, \nonumber \\
\text{(D)} &\inf& \{ \langle \hat b , \hat v \rangle :  \hat s:=\hat c+\hat A^\top \hat v \in K^* \}, 
\end{eqnarray}
where $K^*:=\{\hat s: \langle \hat s, \hat z\rangle \geq 0, \forall \hat z \in K\}$ is the dual cone of $K$. We consider the infeasible-start approach of Nesterov \cite{infea-1} and its generalized version by Nesterov, Todd, and Ye \cite{infea-2} which uses a homogeneous self-dual embedding. \cite{infea-2}, as far as we know, is the most comprehensive result for infeasible-start interior-point methods for conic optimization to date and we compare our results with it. For arbitrary starting points $\hat z^0 \in \inte K$, $\hat s^0 \in \inte K^*$, $\hat v^0 \in \R^m$, and $\hat \tau_0, \hat \kappa_0 >0$, we define
\begin{eqnarray}  \label{eq:Q-A}
\begin{array}{rcl}
Q &:=&\left \{ ( \hat z,\hat \tau, \hat s, \hat v, \hat \kappa): \  \hat A \hat z= \check b +\hat \tau \hat b, \ \ \hat s=\check c+\hat \tau \hat c + \hat A^\top \hat v,  \right. \\
  && \ \ \ \left. \langle \hat c, \hat z \rangle+\langle \hat b,\hat v \rangle + \hat \kappa= \check g, \ \  \hat z \in \inte K, \ \hat s \in \inte K^*, \ \hat \tau, \hat \kappa >0 \right \},
\end{array}
\end{eqnarray}
where $\check b:= \hat A \hat z^0-\hat \tau_0 \hat b$, $\check c:= -\hat A^\top \hat v^0+\hat s^0-\hat \tau_0 \hat c$, and $\check g:= \langle \hat c, \hat z^0 \rangle + \langle \hat b,\hat v^0 \rangle +\hat \kappa_0$. 
The authors in \cite{infea-2} solved \eqref{intro-1} by finding a recession direction for $Q$. 
Note that $\langle \hat c,\hat z \rangle + \langle \hat b,\hat v \rangle$ is the conic duality gap. Assume that we have a point in $Q$ with a large $\hat \tau >0$. Then, $(\hat z/\hat \tau,\hat s/ \hat\tau)$ approximately satisfies all the optimality conditions, and if $\hat \tau$ tends to infinity, it converges to a primal-dual optimal solution. Similar principles underlie our approach. 
\section{Some definitions and results about the Domain-Driven formulation} \label{sec:summary}
In this section, we summarize the results we need from \cite{karimi_arxiv} including the definition of the duality gap, the primal-dual central path, and the theorem that shows there exists an algorithm PtPCA that follows the path efficiently. 

For every point $ x \in \R^n$ such that $A  x \in D$ and every point $ y \in D_*$ such that $A^\top y = -c$, \emph{the duality gap} is defined as:
\begin{eqnarray} \label{eq:duality-gap-1}
\langle c, x \rangle + \delta_*(y|D),
\end{eqnarray}
where
\begin{eqnarray}  \label{eq:supp-fun-1}
\delta_*(y|D) := \sup\{ \langle y,z \rangle :  z \in D\},  \ \ \ \text{(support function of $D$).}
\end{eqnarray}
Lemma 2.1 in \cite{karimi_arxiv} shows that duality gap is well-defined and zero duality gap is a guarantee for optimality.
Let us fix an absolute constant $\xi > 1$ and define the initial points:
\begin{eqnarray} \label{starting-points-copy-2}
z^0 := \text{any vector in  $\inte(D)$}, \ \ y^0 :=  \Phi'(z^0), \ \ y_{\tau,0} := -\langle y^0, z^0 \rangle -\xi \vartheta.
\end{eqnarray} 
Then, it is proved in \cite{karimi_arxiv} that the system 
\begin{eqnarray} \label{trans-dd-path-1-copy-2}
\begin{array}{rcl}
&(a)&  A x + \frac{1}{\tau} z^0 \in \inte D, \ \ \tau > 0,  \\
&(b)& A^\top y -A^\top y^0 = -(\tau-1) c, \ \ y \in \inte D_*,  \\
&(c)& y=\frac{\mu  }{\tau}  \Phi' \left (  A x + \frac{1}{\tau} z^0 \right), \\
&(d)& \langle c,x \rangle +  \frac{1}{\tau} \langle y, Ax+\frac{1}{\tau} z^0 \rangle =  -\frac{\vartheta \xi \mu}{\tau^2} +\frac{ -y_{\tau,0}}{\tau},
\end{array}
\end{eqnarray}
has a unique solution $(x(\mu), \tau(\mu), y(\mu))$ for every $\mu > 0$. The solution set of this system for all $\mu >0$ defines our \emph{infeasible-start primal-dual central path}. 
 Let us give a name to the set of points that satisfy \eqref{trans-dd-path-1-copy-2}-(a)-(b):
\begin{eqnarray} \label{QDD-copy-2}
 \ \ \ \ \ Q_{DD}:= \left \{ (x,\tau,y): A x + \frac{1}{\tau} z^0 \in \inte D, \ \tau > 0, \ \  A^\top y -A^\top y^0 = -(\tau-1) c, \   y \in \inte D_*  \right \}. 
\end{eqnarray}
In view of the definition of the central path, for all the points $(x,\tau,y)\in Q_{DD}$, we define
\begin{eqnarray}  \label{eq:dd-4-2}
\begin{array}{rcl}
\mu(x,\tau,y) &:=&   \frac{\tau}{\xi \vartheta}[-y_{\tau,0} - \tau \langle c, x \rangle - \langle y, A x + \frac{1}{\tau} z^0 \rangle], \\
                     &=&- \frac{1}{\xi \vartheta} \left [ \langle y,z^0 \rangle +\tau (y_{\tau,0}+\langle y, Ax \rangle) + \tau^2 \langle c,x \rangle \right] \\
                     &=& - \frac{1}{\xi \vartheta} \left [ \langle y,z^0 \rangle +\tau (y_{\tau,0}+\langle A^\top y^0+c,x \rangle) \right], \ \ \ \text{using \eqref{trans-dd-path-1-copy-2}-(b).}
                     \end{array}
\end{eqnarray}
We say that a point $(x,\tau,y) \in Q_{DD}$ is \emph{$\kappa$-close to the central path} if
\begin{eqnarray} \label{eq:lem:dg-bound-1-2}
\left\|Ax+\frac{1}{\tau} z^0-\Phi'_*\left(\frac{\tau}{\mu} y\right)\right\|_{[\Phi''_*(\frac{\tau}{\mu} y)]^{-1}} \leq \kappa,
\end{eqnarray}
where $\mu:=\mu(x,\tau,y)$.
In the rest of this article, for the analysis of the algorithms, we assume that the neighborhoods of the central path are chosen such that $\xi-1-\kappa > 0$. 
For the points $\kappa$-close to the central path, we have a bound on the duality gap as follows. 
\begin{lemma}  \label{lem:dg-bound-1}
Let $(x,\tau,y) \in Q_{DD}$ be $\kappa$-close to the central path and $\mu:=\mu(x,\tau,y)$. Then,
\begin{eqnarray} \label{eq:dg-bound-1}
-\left(\frac{y_{\tau,0}}{\tau}+\frac{\xi\mu\vartheta}{\tau^2}\right) -\kappa \frac{\mu \sqrt{\vartheta}}{\tau^2} \leq \langle c,x \rangle +\frac{1}{\tau} \delta_*\left(y|D\right)  \leq -\left(\frac{y_{\tau,0}}{\tau}+\frac{\xi\mu\vartheta}{\tau^2}\right) +\kappa \frac{\mu \sqrt{\vartheta}}{\tau^2}+ \frac{\mu \vartheta}{\tau^2}.
\end{eqnarray}
\end{lemma}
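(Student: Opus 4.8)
The plan is to pin down $\langle c,x\rangle + \frac{1}{\tau}\delta_*(y|D)$ by comparing $\delta_*(y|D)$ against the computable quantity $\langle y, Ax+\frac{1}{\tau}z^0\rangle$, because the definition of $\mu(x,\tau,y)$ in \eqref{eq:dd-4-2} already evaluates the combination involving the latter. Indeed, rearranging the first line of \eqref{eq:dd-4-2} gives
\[
\langle c,x\rangle + \frac{1}{\tau}\left\langle y,\, Ax+\tfrac{1}{\tau}z^0\right\rangle = -\frac{y_{\tau,0}}{\tau} - \frac{\xi\mu\vartheta}{\tau^2},
\]
which is exactly the common center term appearing in both sides of \eqref{eq:dg-bound-1}. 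So once I control $\frac{1}{\tau}\delta_*(y|D)$ relative to $\frac{1}{\tau}\langle y, Ax+\frac{1}{\tau}z^0\rangle$, adding $\langle c,x\rangle$ and invoking the display above finishes the argument.

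The key analytic input is a property of the LF conjugate of a $\vartheta$-s.c.\ barrier: for every $w\in\inte D_*$ one has $\langle w, \Phi'_*(w)\rangle \leq \delta_*(w|D) \leq \langle w, \Phi'_*(w)\rangle + \vartheta$, together with the barrier bound $\langle w, \Phi''_*(w)w\rangle \leq \vartheta$. The lower bound is immediate since $\Phi'_*(w)\in D$; the upper bound comes from integrating $\frac{d}{dt}\langle w, \Phi'_*(tw)\rangle = \langle w,\Phi''_*(tw)w\rangle \leq \vartheta/t^2$ over $t\in[1,\infty)$ and using $\lim_{t\to\infty}\langle w,\Phi'_*(tw)\rangle = \delta_*(w|D)$. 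I would apply this with $w:=\frac{\tau}{\mu}y\in\inte D_*$ (legitimate since $\tau>0$ and $\mu>0$, so $w$ is a positive multiple of $y$) and then use positive homogeneity of $\delta_*(\cdot|D)$ to divide out the factor $\frac{\tau}{\mu}$, obtaining
\[
\left\langle y,\, \Phi'_*\!\left(\tfrac{\tau}{\mu}y\right)\right\rangle \leq \delta_*(y|D) \leq \left\langle y,\, \Phi'_*\!\left(\tfrac{\tau}{\mu}y\right)\right\rangle + \frac{\mu\vartheta}{\tau}.
\]

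Next I would swap $\Phi'_*(\frac{\tau}{\mu}y)$ for the actual iterate via $\kappa$-closeness \eqref{eq:lem:dg-bound-1-2}. Writing $p:=\Phi'_*(\frac{\tau}{\mu}y)$ and $u:=Ax+\frac{1}{\tau}z^0$, the generalized Cauchy--Schwarz inequality \eqref{eq:CS} with $B=[\Phi''_*(\frac{\tau}{\mu}y)]^{-1}$ yields $|\langle y, u-p\rangle| \leq \|u-p\|_B\,\|y\|^*_B \leq \kappa\,\langle y,\Phi''_*(\frac{\tau}{\mu}y)y\rangle^{1/2}$. Since $\langle y,\Phi''_*(\frac{\tau}{\mu}y)y\rangle = \frac{\mu^2}{\tau^2}\langle w,\Phi''_*(w)w\rangle \leq \frac{\mu^2}{\tau^2}\vartheta$, the right-hand side is at most $\kappa\frac{\mu}{\tau}\sqrt{\vartheta}$. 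Substituting $\langle y,p\rangle = \langle y,u\rangle \mp \kappa\frac{\mu}{\tau}\sqrt{\vartheta}$ into the previous display, multiplying through by $\frac{1}{\tau}$, adding $\langle c,x\rangle$, and replacing $\langle c,x\rangle+\frac{1}{\tau}\langle y,u\rangle$ by its value $-\frac{y_{\tau,0}}{\tau}-\frac{\xi\mu\vartheta}{\tau^2}$ produces \eqref{eq:dg-bound-1} verbatim, with the extra $\frac{\mu\vartheta}{\tau^2}$ term on the upper side coming from the slack in the conjugate-barrier inequality.

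The main obstacle is purely the two conjugate-barrier facts used in the second paragraph, namely $\langle w,\Phi'_*(w)\rangle \leq \delta_*(w|D) \leq \langle w,\Phi'_*(w)\rangle+\vartheta$ and $\langle w,\Phi''_*(w)w\rangle\leq\vartheta$; everything else is bookkeeping driven by Cauchy--Schwarz and the definition of $\mu$. These are precisely the ``more than an arbitrary s.c.\ function'' properties of LF conjugates mentioned in the introduction, so I expect to cite them from \cite{karimi_arxiv} rather than reprove them, and I would take care to verify that the homogeneity rescaling and the sign $\mu>0$ are valid (the latter being guaranteed by the neighborhood choice with $\xi-1-\kappa>0$).
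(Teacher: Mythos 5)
Your proof is correct, and since the paper states Lemma \ref{lem:dg-bound-1} as a result imported from \cite{karimi_arxiv} without reproving it here, there is no in-paper proof to diverge from; your argument is exactly the one the surrounding machinery is built for (split $\delta_*(y|D)$ against $\langle y, Ax+\tfrac{1}{\tau}z^0\rangle$ via $\Phi'_*(\tfrac{\tau}{\mu}y)$, control the first gap by the conjugate-barrier inequalities of \cite{interior-book}-Theorem 2.4.2 and $\|w\|_{\Phi''_*(w)}\le\sqrt{\vartheta}$, and the second by Cauchy--Schwarz with the $\kappa$-closeness bound). The two conjugate facts you isolate are indeed the ones the paper itself invokes elsewhere (e.g., in \eqref{eq:dd-outcome-38} and in the proof of Proposition \ref{prop:strict-infea}), so citing them rather than re-deriving them by integration is appropriate.
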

A \emph{polynomial-time predictor-corrector algorithm} (PtPCA) is designed in \cite{karimi_arxiv} that follows the path efficiently in the following sense:
\begin{theorem} \label{thm:complexity-result}
For the polynomial-time predictor-corrector algorithm (PtPCA), there exists a positive absolute constant $\gamma$ depending on $\xi$  such that after $N$ iterations, the algorithm returns a point $(x,\tau,y) \in Q_{DD}$ close to the central path that satisfies
\begin{eqnarray} \label{eq:thm-com}
\mu(x,\tau,y)  \geq \exp\left(\frac{\gamma}{\sqrt{\vartheta}} N \right). 
\end{eqnarray}
\end{theorem}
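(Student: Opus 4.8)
The plan is to show that a single predictor-corrector iteration multiplies the path parameter $\mu$ by a factor of at least $\exp(\gamma/\sqrt{\vartheta})$, and then to iterate this bound multiplicatively. First I would pin down the base case: a short computation evaluating \eqref{eq:dd-4-2} at the starting configuration $x=0$, $\tau=1$, $y=y^0$ and substituting $y_{\tau,0}=-\langle y^0,z^0\rangle-\xi\vartheta$ from \eqref{starting-points-copy-2} gives $\mu=1$, so the initial iterate sits on the central path with $\mu_0=1$. Consequently \eqref{eq:thm-com} reduces to the per-iteration geometric growth claim, and the theorem follows from the telescoping estimate $\mu_N\ge \mu_0\prod_{k=1}^N\exp(\gamma/\sqrt{\vartheta})=\exp(\gamma N/\sqrt{\vartheta})$.

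\emph{The predictor step.} Here I would differentiate the central-path system \eqref{trans-dd-path-1-copy-2} to obtain the tangent direction at the current centered point and move along the resulting affine approximation of the path in the direction of increasing $\mu$, with step length $\alpha>0$. The two affine conditions defining $Q_{DD}$ in \eqref{QDD-copy-2} are preserved to first order, so the essential task is to bound how fast the proximity measure \eqref{eq:lem:dg-bound-1-2} is allowed to degrade. Using the self-concordance of $\Phi_*$ — in particular that $\Phi''_*$ is slowly varying on its Dikin ellipsoid and that $\|\Phi'_*(\cdot)\|^*_{\Phi''_*}\le\sqrt{\vartheta}$ by the $\vartheta$-barrier property — I expect the deterioration of \eqref{eq:lem:dg-bound-1-2} to be quadratic in the local-norm length of the step, which scales like $\alpha\sqrt{\vartheta}$. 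Hence a step $\alpha=\Theta(1/\sqrt{\vartheta})$ keeps the iterate inside a fixed wider neighborhood $\mathcal{N}(\kappa_1)$ with $\kappa_1>\kappa$, while the structure of \eqref{eq:dd-4-2} shows that this same step raises $\mu$ by a multiplicative factor $1+\Omega(1/\sqrt{\vartheta})$.

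\emph{The corrector step.} From the point in $\mathcal{N}(\kappa_1)$ produced by the predictor, I would apply Newton-type centering steps in the local metric $\Phi''_*(\tfrac{\tau}{\mu}y)$ appearing in \eqref{eq:lem:dg-bound-1-2}. Standard self-concordance theory guarantees that, once $\kappa_1$ is a small enough absolute constant, these steps converge quadratically and restore $\kappa$-closeness in $O(1)$ iterations; because the correction adjusts the residual in \eqref{eq:lem:dg-bound-1-2} while respecting the affine conditions of $Q_{DD}$, it changes $\mu$ only by a factor $1\pm O(1/\vartheta)$. Combining the predictor gain with the corrector loss, one cycle still yields $\mu_{k+1}\ge\mu_k\bigl(1+c/\sqrt{\vartheta}\bigr)\ge\mu_k\exp(\gamma/\sqrt{\vartheta})$ for suitable absolute constants $c,\gamma$ depending only on $\xi$ (through $\kappa,\kappa_1$), using $\log(1+t)\ge t/2$ for small $t$.

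The hard part will be the predictor analysis: precisely controlling the first-order change of the proximity functional \eqref{eq:lem:dg-bound-1-2} along the tangent direction. Unlike the feasible-start setting, the homogenizing variable $\tau$ couples the primal quantity $Ax+\tfrac1\tau z^0$ and the dual variable $y$ nonlinearly through conditions (a)--(c) of \eqref{trans-dd-path-1-copy-2}, so both the tangent direction and the induced change in $\mu$ must be estimated using the special identities for LF conjugates of self-concordant barriers, namely $\Phi'_*(\Phi'(z))=z$ and $\Phi''_*(\Phi'(z))=[\Phi''(z)]^{-1}$, rather than generic estimates for an arbitrary self-concordant function. Verifying that the $\tau$-weighted argument $\tfrac{\tau}{\mu}y$ stays in the interior of $\dom\Phi_*$ and that the extra coordinate $y_{\tau}$ evolves consistently with Lemma \ref{lem:dg-bound-1} is where I expect the bulk of the technical work to lie.
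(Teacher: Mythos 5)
There is an important mismatch of expectations here: this paper does not prove Theorem \ref{thm:complexity-result} at all. Section \ref{sec:summary} is explicitly a summary of results imported from the companion work \cite{karimi_arxiv}, where the PtPCA algorithm (its predictor and corrector directions, neighborhood parameters, and step-size rules) is actually defined and analyzed; within the present paper the theorem functions as a black box. So there is no in-paper proof to compare your attempt against, and any proof must in effect reconstruct the analysis of \cite{karimi_arxiv}. Your outline is the standard predictor--corrector template for doing so, and your base-case computation is correct: substituting $(x,\tau,y)=(0,1,y^0)$ into \eqref{eq:dd-4-2} together with $y_{\tau,0}=-\langle y^0,z^0\rangle-\xi\vartheta$ indeed gives $\mu=1$, and this point lies on the central path, so the theorem does reduce to a per-iteration multiplicative gain of $\exp(\gamma/\sqrt{\vartheta})$.

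The genuine gap is that the heart of the argument is asserted rather than proved. You state that the degradation of the proximity measure \eqref{eq:lem:dg-bound-1-2} along the tangent direction is quadratic in a local step length that ``scales like $\alpha\sqrt{\vartheta}$,'' that the corrector perturbs $\mu$ only by $1\pm O(1/\vartheta)$, and that the predictor raises $\mu$ by $1+\Omega(1/\sqrt{\vartheta})$ --- but none of these is derived, and you yourself flag the predictor estimate as the place where ``the bulk of the technical work'' lies. Two of these claims need particular care. First, $\mu$ here is not a free parameter but the functional $\mu(x,\tau,y)$ of \eqref{eq:dd-4-2}, so the corrector \emph{will} move it, and one must show the loss does not cancel the predictor's gain; this requires the explicit corrector direction, which is not available without the algorithm's definition. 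Second, the membership condition $Ax+\frac{1}{\tau}z^0\in\inte D$ in \eqref{QDD-copy-2} is not affine in $(x,\tau)$ (it involves $1/\tau$), so ``preserved to first order'' is not the right statement: the linear equation in $(y,\tau)$ can be preserved exactly by a direction satisfying its linearization, while interiority must be maintained through the neighborhood argument, and the theorem requires exact membership in $Q_{DD}$, not approximate. As it stands the proposal is a correct plan whose decisive quantitative steps remain open; completing it would amount to reproducing the analysis of \cite{karimi_arxiv}, which is beyond what can be extracted from the material in this paper alone.
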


At this stage, we assume that the algorithm returns a point $(x,\tau,y)$, $\kappa$-close to the central path, with a large enough $\mu(x,\tau,y)$. We need to interpret such a point to classify the status of a given instance as accurately as possible.  In this paper, we first categorize  the possible statuses for a problem instance in the Domain-Driven setup. Then we discuss the statuses that we can determine and their complexity analysis for obtaining the corresponding certificates. 
\section{Categorizing problem statuses} \label{sec:cat-statuses}
 Let us first define the following four parameters that are the measurements of primal and dual feasibility:
\begin{definition} \label{tf}
For a linear embedding $x \mapsto Ax$ : $\R^n \rightarrow \R^m$, a closed convex set $D \subset \mathbb \R^m$ with a nonempty interior, and $D_*$ defined in \eqref{eq:leg-conj-2}, we define ($\range(A)$ is the range of $A$)
\begin{eqnarray} \label{t'}
\sigma_p &:=&\dist(\range(A), D), \nonumber \\
\sigma_d &:=& \dist(\{y: A^\top y=-c\}, D_*),
\end{eqnarray}
where $\text{dist}(\cdot,\cdot)$ returns the distance between two convex sets. We call $\sigma_p$ the \emph{measure of primal infeasibility}, and $\sigma_d$ the \emph{measure of dual infeasibility}. For $z^0 \in \inte D$ and $y^0 \in \inte D_*$, we define
\begin{eqnarray} \label{t'-2}
t_p(z^0) &:=&\sup \{t \geq 1 : \exists x \in \R^n \ \text{s.t.} \ Ax+\frac{1}{t} z^0 \in D\}, \nonumber \\
&=&\sup \{t \geq 1 : \exists x \in \R^n \ \text{s.t.} \ Ax+ z^0 \in tD\}, \nonumber \\
t_d(y^0) &:=&\sup \{t \geq 1: \exists y \in D_* \ \text{s.t.} \ A^\top y-A^\top y^0=-(t-1)c \}. 	
\end{eqnarray}
\end{definition}
Note that all the above measures are scale dependent. For example, $t_p(z^0)$ attains different values when we change $z^0$ with respect to the boundary of the set $D$. 
The following lemma connects the parameters defined in Definition \ref{tf}. 
\begin{lemma}  \label{lem:dd-con-par}
Let $x \mapsto Ax : \R^n \rightarrow \R^m$ be a linear embedding, $c \in \mathbb \R^n$, and $D \subset \mathbb \R^m$ be a closed convex set with a nonempty interior. Then, \\
(a) for every $z^0 \in \inte D$ we have
\begin{eqnarray} \label{eq:lem:dd-con-par-1}
\sigma_p  \leq \frac{\|z^0\|}{t_p(z^0)};
\end{eqnarray}
(b) for every $y^0 \in \inte D_*$ we have
\begin{eqnarray} \label{eq:lem:dd-con-par-1-2}
\sigma_d  \leq \frac{\|y^0\|}{t_d(y^0)}.
\end{eqnarray}
\end{lemma}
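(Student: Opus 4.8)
The plan is to prove each inequality constructively: for a value of $t$ just below the defining supremum I would exhibit one point in each of the two sets whose distance is being measured, with separation no larger than the claimed right-hand side, and then pass to the limit as $t$ increases to the supremum.

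For (a), I would fix $t$ with $1 \le t < t_p(z^0)$ and take $x \in \R^n$ with $d := Ax + \tfrac{1}{t} z^0 \in D$, which exists by the definition of $t_p(z^0)$. Since $Ax \in \range(A)$ and $d \in D$,
\[ \sigma_p \le \|Ax - d\| = \left\| \tfrac{1}{t} z^0 \right\| = \frac{\|z^0\|}{t}. \]
Because this holds for every admissible $t$, letting $t \uparrow t_p(z^0)$ gives the claim; the case $t_p(z^0) = +\infty$ merely forces $\sigma_p = 0$.

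For (b), the structural fact I would use is that $D_*$ is a convex cone, which is immediate from its definition \eqref{eq:leg-conj-2} as an intersection of homogeneous half-spaces. I would fix $t$ with $1 < t < t_d(y^0)$ and let $y \in D_*$ satisfy $A^\top y - A^\top y^0 = -(t-1)c$. Dividing the defining relation by $t-1$ suggests the candidate points $y_1 := \tfrac{1}{t-1}(y - y^0)$ and $y_2 := \tfrac{1}{t-1} y$: a direct check gives $A^\top y_1 = -c$, so $y_1$ lies in $\{y : A^\top y = -c\}$, while $y_2 \in D_*$ by the cone property, and $\|y_1 - y_2\| = \|y^0\|/(t-1)$. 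Hence
\[ \sigma_d \le \frac{\|y^0\|}{t-1}, \]
and letting $t \uparrow t_d(y^0)$ produces the bound.

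The step I expect to be the real issue is pinning down the exact denominator in (b). Because the relation defining $t_d(y^0)$ carries the factor $-(t-1)c$ (mirroring the central-path equation \eqref{trans-dd-path-1-copy-2}-(b)), cancelling the $A^\top y^0$ term forces a division by $t-1$, not $t$, so my construction yields $\sigma_d \le \|y^0\|/(t_d(y^0)-1)$ rather than $\|y^0\|/t_d(y^0)$. This appears to be sharp: with $A = \mathrm{id}$ on $\R$, $c = 1$, and $D = (-\infty,1]$ (so $\rec(D) = (-\infty,0]$ and $D_* = \R_{\ge 0}$) and $y^0 = 1$, one computes $\sigma_d = 1$ and $t_d(y^0) = 2$, so $\|y^0\|/(t_d(y^0)-1) = 1$ is attained while $\|y^0\|/t_d(y^0) = \tfrac12 < \sigma_d$. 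I would therefore either record the dual bound with $t_d(y^0)-1$, or re-examine whether $t_d$ is intended with a $\tfrac{1}{t}$-type normalization analogous to the one in (a); the remaining points—non-attainment of the suprema, and the degenerate cases $t_p(z^0)=\infty$ or $t_d(y^0)=1$—are routine and are absorbed into the same limiting argument.
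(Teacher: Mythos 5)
Your proof of part (a) is exactly the paper's: the paper takes sequences $\{x^k\}$ and $\{t_k\}$ with $Ax^k+\frac{1}{t_k}z^0\in D$ and $t_k\rightarrow t_p(z^0)$, observes $\sigma_p\leq\|Ax^k+\frac{1}{t_k}z^0-Ax^k\|=\|z^0\|/t_k$, and passes to the limit, which is precisely your argument.

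For part (b) the paper offers nothing beyond ``Part (b) can be proved similarly,'' and your careful execution of the ``similar'' argument exposes a genuine discrepancy. The natural witnesses are indeed $y_1=\frac{1}{t-1}(y-y^0)$, which satisfies $A^\top y_1=-c$, and $y_2=\frac{1}{t-1}y\in D_*$ (using that $D_*$ is a closed convex cone), and these yield $\sigma_d\leq\|y^0\|/(t_d(y^0)-1)$ rather than the stated $\|y^0\|/t_d(y^0)$. Your one-dimensional example ($A=\mathrm{id}$ on $\R$, $c=1$, $D=(-\infty,1]$, so $D_*=[0,+\infty)$, $y^0=1$, giving $\sigma_d=1$ and $t_d(y^0)=2$) is a valid counterexample to \eqref{eq:lem:dd-con-par-1-2} as printed, so the issue is not an artifact of your particular construction: the asymmetry comes from the homogenization $-(t-1)c$ in the definition of $t_d$ versus the $\frac{1}{t}z^0$ scaling in the definition of $t_p$, exactly as you observe. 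The correct form of (b) is $\sigma_d\leq\|y^0\|/(t_d(y^0)-1)$, which is vacuous when $t_d(y^0)=1$. Fortunately the correction is harmless downstream: the lemma is invoked only to conclude that $\tau$ remains bounded on $Q_{DD}$ when $\sigma_p$ or $\sigma_d$ is positive (e.g.\ in Section \ref{sec:weak} and Theorem \ref{thm:weak-detector}), and replacing $t_d(y^0)\leq\|y^0\|/\sigma_d$ by $t_d(y^0)\leq 1+\|y^0\|/\sigma_d$ changes no $O(\cdot)$ complexity bound.
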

\begin{proof}
Let $z^0 \in \inte D$ be arbitrary and let sequences $\{x^k\} \subset \R^n$ and $\{t_k\} \subset [1,+\infty)$ be such that $Ax^k+\frac{1}{t_k} z^0 \in D$ and $\lim_{k \rightarrow +\infty} t_k =t_p(z^0)$.  Then, by definition of $\sigma_p$, for every $k$, we have
	\begin{eqnarray*}
		\sigma_p \leq \left\| Ax^k+\frac{1}{t_k} z^0-Ax^k \right\|=\frac{\|z^0\|}{t_k}.
	\end{eqnarray*}
We obtain \eqref{eq:lem:dd-con-par-1} when $k$ tends to $+\infty$. Part (b) can be proved similarly. 
\end{proof}
\noindent Here is the classification of statuses for problem \eqref{main-p}. For simplicity, we define 
\begin{eqnarray} \label{eq:feas-regions}
\mathcal F_p:=\{x \in \R^n: Ax \in D\}, \ \ \ \mathcal F_d:=\{y \in D_*: A^\top y=-c\},
\end{eqnarray}
and the duality gap is defined as
\[
\Lambda:= \inf_{x,y} \{\langle c, x\rangle +\delta_*(y|D) : x \in \mathcal F_p, y \in \mathcal F_d\}.
\]
\noindent {\bf Infeasible:} Problem \eqref{main-p} is called \emph{infeasible} if $\mathcal F_p$ is empty.
\begin{enumerate}[(i)]
\item {\bf Strongly infeasible:} there exists $y \in D_*$ such that $A^\top y=0$ and $\delta_*(y|D)=-1$ (equivalent to $\sigma_p >0$ by Lemma \ref{lem:outcome-infeas}). 

 {\bf Strictly infeasible:} there exists such a $y$ in $\inte D_*$. 

\item {\bf Ill-posed:} problem is infeasible, but $\sigma_p =0$. 
\end{enumerate}
\noindent {\bf Feasible:} Problem \eqref{main-p} is called \emph{feasible} if $\mathcal F_p$ is nonempty.
\begin{enumerate}[(i)]
	\item
{\bf Strictly primal-dual feasible:} there exist $ x$ such that $A  x \in \inte D$ and $ y \in \inte D_*$ such that $A^\top y = -c$.

\item {\bf Strongly unbounded:} there exist $x \in \R^n$ with $Ax \in \inte D$ and $h \in \R^n$ with $Ah \in \rec(D)$ such that $\langle c,h \rangle <0$. (equivalent to $\sigma_d >0$ by Lemma \ref{lem:outcome-infeas}). 

{\bf Strictly unbounded:} there exists such an $h$ with $Ah \in \inte(\rec(D))$. 
	
\item \textbf{Ill-posed:} problem is not strictly primal-dual feasible or strongly unbounded:
\begin{enumerate}
\item \textbf{Unstable Optima (with dual certificate):} there exist $x \in \mathcal F_p$ and $ y \in \mathcal F_d$ with duality gap equal to zero (i.e., $\langle c, x \rangle + \delta_*(y|D)=0$). 
\item \textbf{Unstable Optima (without dual certificate):} primal problem has an optimal solution $x^*$, however
\begin{itemize}
\item the set $\mathcal F_d$ is empty.
\item $\mathcal F_d$ is nonempty, and the duality gap is zero, but there does not exist $y \in \mathcal F_d$ such that $\langle c, x^* \rangle + \delta_*(y|D)=0$. 
\item $\mathcal F_d$ is nonempty, the duality gap is $\Lambda >0$, and there exists $y \in \mathcal F_d$ such that $\langle c, x^* \rangle + \delta_*(y|D)=\Lambda$.
\item $\mathcal F_d$ is nonempty, the duality gap is $\Lambda >0$, but there does not exist $y \in \mathcal F_d$ such that $\langle c, x^* \rangle + \delta_*(y|D)=\Lambda$.
\end{itemize}
\item {\bf Unsolvable:}  $\mathcal F_p$ is nonempty and the objective value of the primal problem is bounded, but the primal optimal value is not attained. In the dual side, 
\begin{itemize}
\item the set $\mathcal F_d$ is empty.
\item $\mathcal F_d$ is nonempty, and the duality gap is zero, but there does not exist $y \in \mathcal F_d$ such that $\inf_{x\in \mathcal F_p} \{\langle c, x \rangle + \delta_*(y|D)\}=0$. 
\item $\mathcal F_d$ is nonempty, the duality gap is $\Lambda >0$, and there exists $y \in \mathcal F_d$ such that $\inf_{x\in \mathcal F_p} \{\langle c, x \rangle + \delta_*(y|D)\}=\Lambda$.
\item $\mathcal F_d$ is nonempty, the duality gap is $\Lambda >0$, but there does not exist $y \in \mathcal F_d$ such that $\inf_{x\in \mathcal F_p} \{\langle c, x \rangle + \delta_*(y|D)\}=\Lambda$.
\end{itemize}
\end{enumerate}
\end{enumerate}

The following lemma connects strong infeasibility and unboundedness  to $\sigma_p$ and $\sigma_d$. 
\begin{lemma}  \label{lem:outcome-infeas}
Let $x \mapsto Ax :  \R^n \rightarrow \R^m$ be a linear embedding with kernel $\{0\}$, $D \subset \mathbb \R^m$ be a closed convex set with a nonempty interior, and $D_*$ be defined as in \eqref{eq:leg-conj-2}. \\
\noindent (a) There exists $y \in D_*$ such that $A^\top y=0$ and $\delta_*(y|D)=-1$ if and only if 
\begin{eqnarray} \label{eq:dd-outcome-2-2}
\sigma_p = \dist( \textup{range} (A), D)  >0.
\end{eqnarray}
\noindent (b) Assume that $\{x \in \R^n: Ax \in D\}$ is nonempty. For a vector $c \in \R^n$, there exists $h \in \R^n$ with $Ah \in \rec(D)$ such that $\langle c,h \rangle <0$ if and only if 
\begin{eqnarray} \label{eq:dd-outcome-2-3}
\sigma_d=\dist(\{y: A^\top y=-c\}, D_*)  >0.
\end{eqnarray}
		
\end{lemma}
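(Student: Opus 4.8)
My plan is to prove each equivalence by combining an \emph{easy} direction (existence of the certificate forces the distance to be positive, via a Cauchy--Schwarz estimate exactly in the spirit of the proof of Lemma~\ref{lem:dd-con-par}) with a \emph{harder} direction (a positive distance produces the certificate, via strict separation of two disjoint closed convex sets). The two parts are structurally dual: part (a) separates the subspace $\range(A)$ from $D$, while part (b) separates the affine set $\{y: A^\top y = -c\}$ from the cone $D_*$. The bridge for part (b) will be the bipolar identity $\rec(D) = (D_*)^\circ$, which holds because $\rec(D)$ is a closed convex cone and $D_*=(\rec(D))^\circ$ by \eqref{eq:leg-conj-2}.

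For part (a), the direction $(\Rightarrow)$ is short: given $y\in D_*$ with $A^\top y=0$ and $\delta_*(y|D)=-1$, for any $x$ and any $z\in D$ one has $\langle y, Ax-z\rangle = -\langle y,z\rangle \ge 1$, so Cauchy--Schwarz gives $\|Ax-z\|\ge 1/\|y\|$ and hence $\sigma_p\ge 1/\|y\|>0$. For $(\Leftarrow)$ I would use that $\dist(\range(A),D)=\dist(0,\range(A)-D)=\sigma_p>0$ puts $0$ outside the closed convex set $\cl(\range(A)-D)$, so a separating vector $w$ gives $\langle w,Ax\rangle\le\alpha$ for all $x$ and $\langle w,z\rangle\ge\alpha+\delta$ for all $z\in D$ with $\delta>0$. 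A linear functional bounded above on the subspace $\range(A)$ must vanish there, yielding $A^\top w=0$ and $\alpha\ge 0$, whence $\inf_{z\in D}\langle w,z\rangle\ge\delta>0$. Rescaling $y:=-w/\inf_{z\in D}\langle w,z\rangle$ makes $\delta_*(y|D)=-1$ and preserves $A^\top y=0$; finiteness of the support function then forces $\langle y,h\rangle\le 0$ for every $h\in\rec(D)$ (otherwise $\delta_*(y|D)=+\infty$), i.e.\ $y\in D_*$.

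Part (b) runs along the same lines in the dual geometry. For $(\Rightarrow)$, if $Ah\in\rec(D)$ and $\langle c,h\rangle<0$, then for any $y$ with $A^\top y=-c$ and any $y'\in D_*$ we get $\langle y-y',Ah\rangle=-\langle c,h\rangle-\langle y',Ah\rangle\ge-\langle c,h\rangle>0$ (using $\langle y',Ah\rangle\le 0$), so Cauchy--Schwarz bounds $\sigma_d$ below by $-\langle c,h\rangle/\|Ah\|>0$. For $(\Leftarrow)$ I would strictly separate the affine set $L:=\{y:A^\top y=-c\}$ (nonempty since $\ker A=\{0\}$ makes $A^\top$ onto) from the cone $D_*$, obtaining $w$ bounded below on $L$ and bounded above on $D_*$. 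Boundedness above on the cone forces $\langle w,y'\rangle\le 0$ for all $y'\in D_*$, i.e.\ $w\in(D_*)^\circ=\rec(D)$; boundedness below on the affine subspace $L$ forces $\langle w,\cdot\rangle$ to vanish on $\Null(A^\top)$, i.e.\ $w\in\range(A)$, so $w=Ah$ for some $h$. Finally, for any fixed $y_0\in L$ the separation constant gives $\langle Ah,y_0\rangle=-\langle c,h\rangle>0$, i.e.\ $\langle c,h\rangle<0$, while $Ah=w\in\rec(D)$, as required.

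The main obstacle is the two ``hard'' directions: one must (i) justify \emph{strict} separation, which is precisely what the positive-distance hypothesis buys (the difference set is bounded away from $0$), and (ii) read off the exact algebraic constraints from the separating functional --- that it annihilates the relevant subspace (giving $A^\top y=0$ in (a) and $w\in\range(A)$ in (b)) and respects the cone (giving $y\in D_*$ through finiteness of $\delta_*(\cdot|D)$ in (a), and $w\in\rec(D)$ through the bipolar theorem in (b)). The normalization to the exact value $\delta_*(y|D)=-1$ in part (a) and the identification $\rec(D)=(D_*)^\circ$ in part (b) are the two points that will require the most care.
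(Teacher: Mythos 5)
Your proposal is correct and follows essentially the same route as the paper: the easy directions via the Cauchy--Schwarz bound (yours stated as a direct quantitative lower bound on the distance, the paper's part (b) phrased as a contradiction, but the same computation), and the hard directions via strong separation of the two disjoint closed convex sets, reading off $A^\top y=0$ (resp.\ $w\in\range(A)$) from boundedness of the functional on the subspace (resp.\ affine set) and membership in $D_*$ (resp.\ $\rec(D)$) from boundedness on the cone. If anything, you are slightly more explicit than the paper on two points it compresses --- the bipolar identity $\rec(D)=(D_*)^\circ$ behind the step ``$\langle y,z\rangle\le 0$ for all $y\in D_*$ implies $z\in\rec(D)$'', and the normalization achieving $\delta_*(y|D)=-1$ exactly --- while the paper's $z=Ah+g$, $g=0$ decomposition in part (b) is just another way of saying the separating vector lies in $\range(A)$.
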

\begin{proof}
(a) First assume that there exists $y \in D_*$ such that $A^\top y=0$ and $\delta_*(y|D)=-1$. Consider two sequences $\{z^k\} \subset D$ and $\{x^k\} \subset \R^n$ such that 
\[
\lim_{k \rightarrow +\infty}\left\|z^k-Ax^k\right\|= \text{dist} (D, \text{range} (A)).
\]
Then, we have 
\begin{eqnarray} \label{eq:lem:outcome-infeas}
\langle y, z^k-Ax^k \rangle = \langle y, z^k \rangle \leq \delta_*(y|D) =-1, \ \ \ \forall k \in \Z_+. 
\end{eqnarray}
Using \eqref{eq:CS}, we have $1 \leq |\langle y, z^k-Ax^k \rangle | \leq \left\|z^k-Ax^k\right\| \|y\|$ which implies that $ \text{dist} (D, \text{range} (A)) >0$. For the other direction, assume that $ \text{dist} (D, \text{range} (A))$ is positive. Then, by a separating hyperplane theorem applied to nonempty, closed convex sets $D$ and $\range(A)$, there exist  $y \in \R^m$ and $\beta \in \R$ such that 
\[
\begin{array}{ll}
\langle y, z \rangle > \beta, &  \forall z \in \range(A), \\
\langle y, z \rangle < \beta,  &  \forall z \in D. 
\end{array}
\]
The first relation holds only if $A^\top y =0$, and if we substitute $z=0$ in it, we get $\beta <0$. The second relation holds only if $y \in D_*$ by the definition of $D_*$ in \eqref{eq:leg-conj-2}, and since $\beta < 0$, we have $\delta_*(y|D) < 0$. Suitably scaling $y$,  we may assume that $\delta_*(y|D) = -1$.\\
\noindent (b) Assume that there exists $h \in \R^n$ such that  $Ah \in \rec(D)$ and  $\langle c, h \rangle < 0$. If $\sigma_d=0$, there exists $\{y^k\}\subset D_*$ such that $\lim_{k\rightarrow +\infty} \|A^\top y^k +c\| =0$. By characterization of $D_*$ in \eqref{eq:leg-conj-2}, we have 
\[
0 \geq \langle y^k, Ah\rangle = \langle A^ \top y^k, h\rangle =  \langle A^ \top y^k+c, h\rangle -  \langle c, h\rangle, \ \ \forall k. 
\]
This gives a contradiction when $k$ tends to $+\infty$. For the other direction, assume that $\sigma_d >0$. As kernel of $A$ is $\{0\}$, the set $\{y: A^\top y=-c\}$ is nonempty. Similar to part (a), there exist  $z \in \R^m$ and $\beta \in \R$ such that 
\[
\begin{array}{ll}
\langle y, z \rangle > \beta, &  \forall y \in \{y: A^\top y=-c\}, \\
\langle y, z \rangle < \beta,  &  \forall y \in D_*. 
\end{array}
\]
As $D_*$ is a cone, we must have $\beta \geq 0$ and $\langle y,z \rangle \leq 0$ for every $y \in D_*$; by the definition of $D_*$ in \eqref{eq:leg-conj-2} we have $z \in \rec(D)$. Let us write $z$ as $Ah+g$ for $h \in \R^n$ and $g \in \R^m$ in the kernel of $A^\top$. We claim that $g=0$. Let $y_c$ be any vector such that $A^\top y_c =-c$ (since kernel of $A$ is $\{0\}$, such a vector exists). Then for every $\alpha >0$ we also have $A^\top (y_c\pm\alpha g)=-c $, which implies that 
\[
\beta < \langle y_c\pm\alpha g, z\rangle = \langle y_c\pm \alpha g, Ah+g\rangle=-\langle c,h \rangle+\langle y_c,g\rangle \pm \alpha \langle g,g\rangle,
\] 
for every $\alpha >0$. Therefore, $g=0$ and we have $\langle c, h \rangle < 0$. 
\end{proof}
Let us see an example to elaborate more on ill-posed cases. 
\begin{example}
Define a convex set $D:=\{(x_1,x_2)^\top \in \R^2: x_1 \geq \frac{1}{x_2}, 0 \leq x_2 \leq 2\}$, shown in Figure \ref{Fig8}, and $A:=I_{2\times 2}$, the identity matrix. Note that the recession cone of $D$ is a ray, which implies that $D_*$ defined in \eqref{eq:leg-conj-2} is $\{(y_1,y_2)^\top \in \R^2: y_1 \leq 0 \}$ as shown in Figure \ref{Fig8}. Let us define $c:=[0 \ 1]^\top$; then, the optimal objective value of the primal is 0 which is not attained. The system $A^\top y=-c$ has a unique solution $\bar y:=[0 \ -1]^\top$ that is on the boundary of $D_*$. It is also clear from the figure that $\delta_*(\bar y|D) \leq 0$. Therefore, both primal and dual problems are feasible; however, we do not have a pair of primal-dual feasible solutions with zero duality gap. 
\begin{figure}[h]
	\centering
	\includegraphics[scale=0.5] {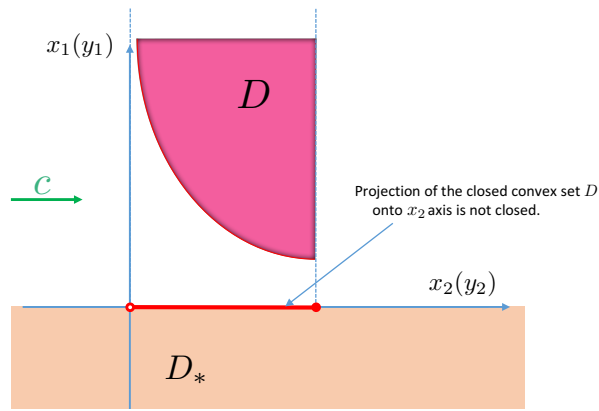}
	\caption{ \small An example of problem \eqref{main-p} with $D \subset \R^2$. }
	\label{Fig8}
\end{figure}

Now assume that we change $A$ to $A:=[1 \ 0]^\top$ ($\range(A)=(\R,0)$). It is clear from the picture that the feasible region of problem \eqref{main-p} is empty. If we choose $c=-1$, then the system $A^\top y= y_1=-c$ does not have a solution in $D_*$. This implies that both primal and dual are infeasible. However, the measure of primal feasibility $\sigma_p$ we defined in \eqref{t'} is zero and we have approximately feasible points with arbitrarily small objective values. If we shift $D$ to the right, $D_*$ does not change, but we can make $\sigma_p$ arbitrarily large. 
\end{example}
Table \ref{table:comparison}  compares the complexity bounds we derive in this paper and the corresponding ones for the conic setup in \cite{infea-2}. 
\begin{center}
\begin{table} [h]
\caption{\small Comparison of complexity bounds of Domain-Driven and conic setup for different statuses. The bounds for the strictly infeasible and unbounded cases recover the conic ones in case $D$ is a closed convex cone.}
\label{table:comparison}
\renewcommand{\arraystretch}{1.5}
\begin{tabular}{|c|c|c|}
\hline
 {\bf Status} & {\bf Domain-Driven form} &    {\bf Conic form \cite{infea-2}} \\ \hline
 \multicolumn{1}{l}{\emph{Strict Detectors}} \\ \hline
Strictly &  $O\left(\sqrt{\vartheta} \ln \left(\frac{\vartheta}{\sigma_f \epsilon}\right)\right)$   &   $O\left(\sqrt{\hat\vartheta} \ln \left(\frac{\hat \vartheta}{\rho_f \epsilon}\right)\right)$ \\ 
  primal-dual feasible  & $\sigma_f=$ feasibility measure & $\rho_f=$ feasibility measure \\ \hline
  Strictly &  $O\left(\sqrt{\vartheta} \ln \left(t_p(z^0) B_{p,\tau_{\xi,\kappa}}\right)\right)$   &   $O\left(\sqrt{\hat\vartheta} \ln \left(\frac{\hat \vartheta}{\rho_p }\right)\right)$ \\ 
  infeasible  & $B_{p,\tau_{\xi,\kappa}}=$ a bound on $x$ vectors & $\rho_p=$ primal infeasibility measure \\ \hline
  Strictly  unbounded &  $O \left ( \sqrt{\vartheta}  \ln \left (    \frac{1}{\vartheta\epsilon} t_d(y^0)+ t_d(y^0) B_{d,0} \right )\right)$   &   $O\left(\sqrt{\hat\vartheta} \ln \left(\frac{\hat \vartheta}{\rho_d}\right)\right)$ \\ 
(dual infeasible) & $B_{d,0}=$ a bound on $y$ vectors & $\rho_d=$ dual infeasibility measure \\ \hline
 \multicolumn{1}{l}{\emph{Weak Detectors}}   \\ \hline
 Unstable Optima  &  $O\left(\sqrt{\vartheta} \ln \left(\frac{\vartheta B}{\epsilon}\right)\right)$   &   $O\left(\sqrt{\hat \vartheta} \ln \left(\frac{\hat \vartheta \hat B}{\epsilon}\right)\right)$ \\ 
(with dual  & $B=\xi + \min \{ \frac{1}{\vartheta}\langle y^0- \bar y, z^0-A \bar x  \rangle: (\bar x,\bar y)$ &  $\hat B=1+\min \{ \langle \hat s^0, \hat z \rangle+ \langle \hat s, \hat z^0   \rangle: (\hat  z,\hat  s)$  \\ 
certificate) &      $ \text{primal-dual feas with 0 duality gap} \}$             &         $ \text{primal-dual feas with 0 duality gap} \}$       \\ \hline
  Weak detector for &  \multirow{3}{*}{ $O\left(\sqrt{\vartheta} \ln \left( \frac{1}{\vartheta\epsilon}  \min \left\{ \frac{\|z^0\|}{\sigma_p},\frac{\|y^0\|}{\sigma_d} \right\} \right)\right)$ }  &   $O\left(\sqrt{\hat\vartheta} \ln \left(\frac{\hat \vartheta}{ \epsilon} \hat B
  \right)\right)$ \\ 
  infeasibility  &  & $\hat B=$ a function of primal/dual \\
 unboundedness &  & infeasibility certificates \\ \hline
\end{tabular}
\end{table}
\end{center}

\section{{Strict primal-dual feasibility}} \label{sec:solvable}
 Let us first review the closest related results from the literature for conic optimization. Assume that for the conic formulation \eqref{intro-1}, both primal and dual problems are strictly feasible, and let $\hat{\bar z} \in \inte K$ and $\hat{\bar s} \in \inte K^*$ such that $\hat{\bar s} = -(\Phi^+)'(\hat{\bar z})$ and $\hat \vartheta:=\langle \hat{\bar s},\hat{\bar z}\rangle$, where $\Phi^+$ is a $\hat \vartheta$-LH s.c.\ barrier defined on $K$ \cite{interior-book}. Then, the conic \emph{feasibility measure} $\rho_{f}$ is defined in \cite{infea-1,infea-2} as
\begin{eqnarray} \label{eq:conic-mea-1}
	\rho_f:= \max \left \{ \alpha: \ \   \hat{\bar z}-\alpha \hat z^0 \in K, \ \  \hat{\bar s}-\alpha \hat s^0 \in K^*  \right\}.
\end{eqnarray}
Theorem 9 in \cite{infea-2} shows that for every point in $Q$ defined in \eqref{eq:Q-A}
we have
\begin{eqnarray} \label{eq:conic-mea-2}
	\hat \tau  \geq \frac{\hat \vartheta +1}{\hat \vartheta + \rho_f} \rho_f \hat \mu - \frac{1-\rho_f}{\rho_f},
\end{eqnarray}
where $\hat \mu$ is a function defined similar to \eqref{eq:dd-4-2} for the conic formulation. 
This inequality is important as it shows how $\hat \tau$ is lower-bounded by an increasing linear function of $\hat \mu$.  \cite{infea-2} also considers a case called ``solvable" where there exists a primal-dual feasible pair $(\hat z^*, \hat s^*)$ with duality gap equal to zero.  Theorem 10 in \cite{infea-2} shows that for the points close to the central path we have
\begin{eqnarray} \label{eq:conic-mea-3}
\hat \tau  \geq \frac{\omega \hat\mu }{\langle \hat s^0, \hat z^* \rangle + \langle \hat s^*, \hat z^0 \rangle + 1},
\end{eqnarray}
where $\omega$ is a positive absolute constant regulating proximity to the central path. For the Domain-Driven form, we consider the case of strict primal-dual feasibility and also the case where strict feasibility fails, but there exists a primal-dual feasible pair with zero duality gap (which is an ill-posed status in our categorization). 
For the case of strict primal and dual feasibility (there exist $x \in \R^n$ such that  $A  x \in \inte D$ and $ y \in \inte D_*$ such that $A^\top y=-c$), let us define
\begin{eqnarray} \label{eq:dd-outcome-29}
	 \bar x(1) &:=& \argmin_x  \{\Phi(Ax)+\langle c,x \rangle \},  \nonumber \\
	\bar y(1) &:=& \Phi'(A \bar x(1)),  \nonumber \\
	\bar y_\tau(1) &:=&  -\xi \vartheta - \langle \bar y(1), A \bar x(1) \rangle.
\end{eqnarray}
$\bar x(1)$ is well-defined by \cite{cone-free}-Lemma 3.1. By using the first order optimality condition, we have $A^\top \bar y(1) = -c$. Now we define the \emph{feasibility measure} as
\begin{eqnarray*} \label{eq:dd-outcome-30}
\sigma_f := \sup \left\{\alpha: \alpha <  1, \ \bar y(1) - \alpha y^0  \in D_*,  \ \frac{A \bar x(1) - \alpha z^0}{ 1- \alpha}  \in D, \ \delta_* (\bar y(1) - \alpha y^0|D) +  \bar y_\tau(1) - \alpha y _{\tau,0}  \leq 0   \right\}.
\end{eqnarray*}
Note that using \cite{interior-book}-Theorem 2.4.2 and the fact that $\bar y(1) = \Phi'(A \bar x(1))$, we have
\[
\delta_*(\bar y(1)|D) + \bar y_\tau(1) \leq \vartheta + \langle \bar y(1), A \bar x(1) \rangle -\xi \vartheta - \langle \bar y(1), A \bar x(1) \rangle = -(\xi-1) \vartheta  < 0.
\]
Hence, $\sigma_f > 0$. The following theorem gives a result similar to \eqref{eq:conic-mea-2} for the Domain-Driven setup. 

\begin{theorem} \label{thm:fes-meas-dd}
	Assume that both primal and dual are strictly feasible. For every point $(x,\tau,y) \in Q_{DD}$ with the additional property that $\delta_*(y|D)+y_\tau \leq 0$, where $y_\tau:=y_{\tau,0}+\tau \langle c,x \rangle$, we have 
\begin{eqnarray} \label{eq:thm:fes-meas-dd-1}
\tau-1  \geq  \sigma_f \mu(x,\tau,y) - \frac{1}{\sigma_f}. 
\end{eqnarray}
\end{theorem}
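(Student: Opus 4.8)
The plan is to prove the inequality separately for each $\alpha<1$ that is feasible in the supremum defining $\sigma_f$, in the sharper per-$\alpha$ form
\[
\tau-1 \;\geq\; \alpha\,\mu(x,\tau,y) - \tfrac{1}{\alpha},
\]
and then let $\alpha\uparrow\sigma_f$. This is legitimate because the right side is nondecreasing in $\alpha\in(0,1]$ (its derivative is $\mu+\alpha^{-2}>0$, using $\mu:=\mu(x,\tau,y)\ge 0$, which itself follows from $\langle y,\,Ax+\tfrac1\tau z^0\rangle \le \delta_*(y|D)\le -y_\tau$ together with the first expression in \eqref{eq:dd-4-2}). Fix such an $\alpha$, write $w:=Ax+\tfrac1\tau z^0\in\inte D$, and form the reference pair $\bar y:=\bar y(1)-\alpha y^0$ and $\bar w:=\frac{A\bar x(1)-\alpha z^0}{1-\alpha}$; the first two conditions in the definition of $\sigma_f$ give exactly $\bar y\in D_*$ and $\bar w\in D$.

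The engine is a pair of generalized-complementarity inequalities, each an instance of $\langle v,u\rangle\le\delta_*(v|D)$ for $v\in D_*$, $u\in D$:
\[
\langle y,\bar w\rangle \leq \delta_*(y|D), \qquad \langle \bar y, w\rangle \leq \delta_*(\bar y|D).
\]
I would take $(1-\alpha)$ times the first plus $\tau$ times the second. Expanding the left-hand sides with the affine relations $A^\top y=A^\top y^0-(\tau-1)c$ and $A^\top\bar y(1)=-c$, the terms linear in $x$ collect into $-\tau\langle\bar y,Ax\rangle=-\tau\langle\bar y,w\rangle+\langle\bar y,z^0\rangle$, whose $\langle\bar y,w\rangle$ piece cancels exactly against the $\tau\langle\bar y,w\rangle$ coming from the second pairing, while the residual $\langle y,z^0\rangle$ is eliminated by the $\mu$-identity \eqref{eq:dd-4-2}. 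On the right I would invoke the standing hypothesis $\delta_*(y|D)\le -y_\tau$ and the third defining condition $\delta_*(\bar y|D)\le -\bar y_\tau(1)+\alpha y_{\tau,0}$, and then substitute $\bar y_\tau(1)=-\xi\vartheta-\langle\bar y(1),A\bar x(1)\rangle$ and $y_{\tau,0}=-\langle y^0,z^0\rangle-\xi\vartheta$. The $\langle c,x\rangle$, $\langle y^0,z^0\rangle$ and $y_{\tau,0}$ contributions then telescope, leaving
\[
\langle y^0-\bar y(1),\,A\bar x(1)-z^0\rangle - (\tau-1)\xi\vartheta + \alpha\,\xi\vartheta\,\mu - (2-\alpha)\,\xi\vartheta \;\leq\; 0.
\]

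Recalling $y^0=\Phi'(z^0)$ and $\bar y(1)=\Phi'(A\bar x(1))$ and rearranging, this is precisely
\[
\tau-1 \;\geq\; \alpha\mu - (2-\alpha) - \frac{R}{\xi\vartheta}, \qquad R:=\langle \Phi'(z^0)-\Phi'(A\bar x(1)),\, z^0-A\bar x(1)\rangle \geq 0,
\]
where $R\ge 0$ by monotonicity of $\Phi'$. Since $-(2-\alpha)=-\tfrac1\alpha+\tfrac{(1-\alpha)^2}{\alpha}$, the per-$\alpha$ target $\tau-1\ge\alpha\mu-\tfrac1\alpha$ follows the moment one shows $R\le \xi\vartheta\,\tfrac{(1-\alpha)^2}{\alpha}$, and then $\alpha\uparrow\sigma_f$ delivers \eqref{eq:thm:fes-meas-dd-1}.

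The hard part is exactly this last estimate on $R$, and it is where the Domain-Driven generality bites: in the conic case the analogous residual disappears because $\langle (\Phi^+)'(\hat{\bar z}),\hat{\bar z}\rangle\equiv\hat\vartheta$ is constant, whereas here $\langle\Phi'(z),z\rangle$ varies. The first two conditions defining $\sigma_f$ are tailored for this. Condition two gives $A\bar x(1)=\alpha z^0+(1-\alpha)\bar w$ with $\bar w\in D$, so the whole segment $[z^0,A\bar x(1)]$ lies in $D$ and $z^0-A\bar x(1)=(1-\alpha)(z^0-\bar w)$; I would write $R=\int_0^1 \|z^0-A\bar x(1)\|^2_{\Phi''(A\bar x(1)+t(z^0-A\bar x(1)))}\,dt$ and control the local Hessian norms along this in-$D$ segment by self-concordance (\cite{interior-book}-Theorem 2.4.2 and its second-order companions), with condition one ($\bar y\in D_*$) governing the conjugate side; the factor $(1-\alpha)^2/\alpha$ should emerge from the $(1-\alpha)$ scaling of $z^0-A\bar x(1)$ together with the $1/\alpha$ reach to $\bar w$. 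I expect all the first-order telescoping to be routine once the two pairings and weights are fixed, and essentially all the difficulty to be concentrated in turning the qualitative $R\ge 0$ into the quantitative $R\le \xi\vartheta(1-\sigma_f)^2/\sigma_f$.
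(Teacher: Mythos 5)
Your skeleton is the right one and, up to bookkeeping, it reproduces the paper's argument: your two pairings $\langle y,\bar w\rangle\le\delta_*(y|D)\le -y_\tau$ and $\langle\bar y,w\rangle\le\delta_*(\bar y|D)\le -\bar y_\tau(1)+\alpha y_{\tau,0}$, combined with weights $(1-\alpha)$ and $\tau$ and the normalizations $\langle\bar y(1),A\bar x(1)\rangle+\bar y_\tau(1)=-\xi\vartheta=\langle y^0,z^0\rangle+y_{\tau,0}$, are exactly the paper's ``second claim'' together with its identities \eqref{eq:dd-outcome-35} and \eqref{eq:dd-outcome-36}, and your intermediate display $-R-(\tau-1)\xi\vartheta+\alpha\xi\vartheta\mu-(2-\alpha)\xi\vartheta\le 0$ checks out. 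The genuine gap is that you then declare the reduction target $R\le\xi\vartheta\,(1-\alpha)^2/\alpha$ to be ``the hard part'' and sketch a self-concordance route (writing $R$ as an integral of local Hessian norms along the segment $[A\bar x(1),z^0]$) that you do not carry out. That route is not just incomplete, it is the wrong tool: $R=\langle\Phi'(z^0)-\Phi'(A\bar x(1)),z^0-A\bar x(1)\rangle$ admits no bound in terms of $\vartheta$ alone (already for $D=\R_+$, $\Phi=-\ln$, it blows up as $A\bar x(1)$ approaches the boundary with $z^0$ fixed), so any valid bound must draw its $\alpha$-dependence from the definition of $\sigma_f$, and barrier calculus along the segment will not produce the conjugate-side control you need.

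What closes the argument is one more instance of the very same support-function pairing you already used twice, namely the shifted dual point against the shifted primal point: $\langle\bar y,\bar w\rangle+\bar y_\tau(1)-\alpha y_{\tau,0}\le\delta_*(\bar y|D)+\bar y_\tau(1)-\alpha y_{\tau,0}\le 0$ by the second and third conditions in the definition of $\sigma_f$. Multiplying by $(1-\alpha)$, expanding $\langle\bar y(1)-\alpha y^0,\,A\bar x(1)-\alpha z^0\rangle$, and substituting the two $-\xi\vartheta$ normalizations gives precisely $R+2\xi\vartheta\le\xi\vartheta(\tfrac1\alpha+\alpha)$, i.e.\ $R\le\xi\vartheta\,(1-\alpha)^2/\alpha$ (this is the paper's ``first claim,'' inequality \eqref{eq:dd-outcome-33}). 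So the missing step is a one-line algebraic consequence of the third defining condition of $\sigma_f$ evaluated at $\bar w$ rather than at $w$; with it, your per-$\alpha$ inequality and the limit $\alpha\uparrow\sigma_f$ (your monotonicity/continuity argument there is fine, as is your verification that $\mu\ge 0$ under the hypothesis) complete the proof.
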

\begin{proof}
For two points of the form $(\tilde z,\tilde\tau,\tilde y,\tilde y_\tau),(\bar z,\bar \tau,\bar y,\bar y_\tau) \in \R^m\oplus\R\oplus\R^m\oplus\R$, we use $\bullet$ to denote the natural scalar product in this context:
\[
(\tilde z,\tilde\tau,\tilde y,\tilde y_\tau)\bullet (\bar z,\bar \tau,\bar y,\bar y_\tau):= \langle \tilde y, \bar z \rangle + \bar \tau \tilde y_\tau+\langle \bar y, \tilde z \rangle + \tilde\tau \bar  y_\tau. 
\]
The proof is based on the three scalar products among three points $(\tau Ax+z^0, \tau, y, y_\tau)$, $(z^0,1,y^0, y_{\tau,0})$, and $(A\bar x(1),1,\bar y(1), \bar y_\tau(1))$. First we claim that 
\begin{eqnarray} \label{eq:dd-outcome-33}
-(A\bar x(1),1,\bar y(1), \bar y_\tau(1)) \bullet (z^0,1,y^0, y_{\tau,0})  \leq \xi \vartheta  \left ( \frac{1}{\sigma_f} + \sigma _f \right).
\end{eqnarray}
Consider a sequence $\{\alpha_k\} \subset (0,\sigma_f)$ such that $\lim_{k \rightarrow+\infty} \alpha_k = \sigma_f$.  By definition of $\sigma_f$ and $\delta_*$, for every $k$ we have
\begin{eqnarray*} \label{eq:dd-outcome-31}
\langle \bar y(1) - \alpha_k y^0, \frac{A \bar x(1) - \alpha_k z^0}{ 1- \alpha_k} \rangle  +  \bar y_\tau(1) - \alpha_k y _{\tau,0} \ \leq \ \delta_*(\bar y(1) - \alpha_k  y^0|D) + \bar y_\tau(1) - \alpha_k y _{\tau,0} \ \leq 0.
\end{eqnarray*}
Multiplying both sides with $(1-\alpha_k)$, reordering the terms, and taking the limit as $k \rightarrow +\infty$  give us
\begin{eqnarray*} \label{eq:dd-outcome-32}
\langle  \bar y(1) , A \bar x(1)  \rangle + \bar y_\tau(1) -  \sigma _f \left (\langle \bar y(1) , z^0  \rangle + \bar y_\tau(1) +\langle y^0 , A \bar x(1)  \rangle + y _{\tau,0} \right )  + \sigma_f ^2  \left ( \langle  y^0 ,  z^0  \rangle + y _{\tau,0} \right ) \leq 0.
\end{eqnarray*}
By \eqref{eq:dd-outcome-29} we have $\langle  \bar y(1) , A \bar x(1)  \rangle + \bar y_\tau(1) = -\xi \vartheta$ and by \eqref{starting-points-copy-2} we have $\langle  y^0 ,  z^0  \rangle + y _{\tau,0} = -\xi \vartheta$. Substituting these in the above inequality and dividing both sides by $\sigma_f$ we get \eqref{eq:dd-outcome-33}. The second claim is that
\begin{eqnarray*} \label{eq:dd-outcome-34}
-(\tau Ax+z^0, \tau, y, y_\tau)\bullet (A\bar x(1),1,\bar y(1), \bar y_\tau(1))  \geq \sigma_f (-(\tau Ax+z^0, \tau, y, y_\tau)\bullet (z^0,1,y^0, y_{\tau,0})).
\end{eqnarray*}
To prove this, we need the following two inequalities: 
\begin{eqnarray} \label{eq:inter-1}
\begin{array}{rcl}
\langle y, A \bar x(1) - \sigma _f z^0  \rangle + (1-\sigma_f) y_\tau &\leq& 0,  \\ 
\langle  \bar y(1)-\sigma_f y^0,  Ax+\frac{1}{\tau} z^0  \rangle + (\bar y_\tau(1) - \sigma_f y_{\tau,0})  &\leq& 0. 
\end{array}
\end{eqnarray}	
The first one is by using the hypothesis of the theorem and then the definition of $\delta_*$. The second one holds by using the definition of $\sigma_f$ and $\delta_*$ and a similar argument we made by using $\{\alpha_k\}$. 
If we multiply the second inequality in \eqref{eq:inter-1} by $-\tau$, add it to the negation of the first inequality, add $\sigma_f \left ( -\langle y,z^0  \rangle -  y_\tau - \langle  y^0,\tau Ax+z^0  \rangle - \tau  y_{\tau,0} \right)$ to both sides, and simplify, we prove the second claim.
For the next relation between the $\bullet$ products, by adding and subtracting $(\tau-1) \bar y_\tau(1)$ and using $\bar y_\tau(1) = -\xi \vartheta - \langle \bar y(1), A \bar x(1) \rangle=-\xi \vartheta +\langle c,\bar x(1) \rangle$, and also using $y _{\tau,0}=y_\tau-\tau \langle c,x \rangle$, for the LHS of \eqref{eq:dd-outcome-33} we have
\begin{eqnarray} \label{eq:dd-outcome-35}
\nonumber 
\begin{array}{rclc}
&& -(A\bar x(1),1,\bar y(1), \bar y_\tau(1)) \bullet (z^0,1,y^0, y_{\tau,0}) & \\
&=& (\tau-1)(-\xi \vartheta +\langle c,\bar x(1) \rangle) -\langle y^0 , A \bar x(1)  \rangle -  y_\tau + \tau \langle c,x\rangle  - \langle  \bar y(1) , z^0  \rangle - \tau \bar y_\tau(1) & \\
&=& -(\tau-1) \xi \vartheta -\langle  A^\top y^0-(\tau-1) c , \bar x(1) \rangle -  y_\tau - \langle  \bar y(1) ,\tau Ax+z^0 \rangle - \tau \bar y_\tau(1), & \text{$A^\top \bar y(1)=-c$,} \\
&=& -(\tau-1) \xi \vartheta -(\tau Ax+z^0, \tau, y, y_\tau)\bullet (A\bar x(1),1,\bar y(1), \bar y_\tau(1)).
\end{array}
\end{eqnarray}
For the final relation, by first using $y_\tau=y_{\tau,0}+\tau \langle c,x \rangle$ and then the third equality in \eqref{eq:dd-4-2} for $\mu$, and also $y_{\tau,0}= -\langle y^0, z^0 \rangle -\xi \vartheta$, we have
\begin{eqnarray} \label{eq:dd-outcome-36}
-(\tau Ax+z^0, \tau, y, y_\tau)\bullet (z^0,1,y^0, y_{\tau,0}) = \xi \vartheta (\mu+1). \nonumber 
\end{eqnarray}
By putting together the four inequality and equations we have for the three $\bullet$ products, we get
\begin{eqnarray} \label{eq:dd-outcome-37}
	\sigma_f  (\mu +1) \xi \vartheta  \leq (\tau-1) \xi \vartheta +\xi \vartheta  \left ( \frac{1}{\sigma_f} + \sigma _f \right).
\end{eqnarray}
By dividing both sides by $\xi \vartheta$ and reordering, we obtain the desired result. 
\end{proof}
This theorem and Lemma \ref{lem:dg-bound-1} imply a form of strong duality for the Domain-Driven form. 
\begin{corollary} (Strong Duality) Assume that both primal and dual problems are strictly feasible (there exists $x \in \R^n$ with $A x \in \inte D$ and $y \in \inte D_*$ such that $A^\top y=-c$). Then, there exist $x^* \in \mathcal F_p$ and $y^* \in \mathcal F_d$ such that $\langle c,x^* \rangle + \delta_*(y^*|D)=0$. 
\end{corollary}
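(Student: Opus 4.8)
The plan is to generate points on the infeasible-start central path with $\mu \to +\infty$, show that the corresponding $\tau$ diverges while the normalized duality gap tends to $0$, and then extract limit points that are primal-dual feasible with zero gap; this realizes the claim ``this theorem and Lemma \ref{lem:dg-bound-1} imply strong duality''. Concretely, since the central path exists for every $\mu>0$ (equivalently, by Theorem \ref{thm:complexity-result}, PtPCA produces points in $Q_{DD}$ close to the path with $\mu(x,\tau,y)$ arbitrarily large), I take a sequence $(x_k,\tau_k,y_k)\in Q_{DD}$, $\kappa$-close to the central path, with $\mu_k:=\mu(x_k,\tau_k,y_k)\to+\infty$. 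After checking that these points satisfy the hypothesis $\delta_*(y_k|D)+y_{\tau,k}\le 0$ of Theorem \ref{thm:fes-meas-dd} (which for path points reduces to the self-concordance estimate $\delta_*(\Phi'(w)|D)\le\langle\Phi'(w),w\rangle+\vartheta$ already used just before that theorem), I may invoke it.

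From Theorem \ref{thm:fes-meas-dd}, $\tau_k-1\ge \sigma_f\mu_k-\tfrac{1}{\sigma_f}$. Since $\sigma_f>0$ and $\mu_k\to+\infty$, this forces $\tau_k\to+\infty$; rearranging the same inequality gives $\mu_k\le \sigma_f^{-1}(\tau_k-1+\sigma_f^{-1})$, i.e. $\mu_k=O(\tau_k)$. I then apply Lemma \ref{lem:dg-bound-1}. Using positive homogeneity of the support function, $\tfrac1{\tau_k}\delta_*(y_k|D)=\delta_*(y_k/\tau_k\,|D)$, so the lemma sandwiches $\langle c,x_k\rangle+\delta_*(y_k/\tau_k\,|D)$ between two quantities built from $-\tfrac{y_{\tau,0}}{\tau_k}$ and fixed multiples of $\tfrac{\mu_k}{\tau_k^2}$. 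Because $y_{\tau,0}$ is a fixed constant and $\mu_k=O(\tau_k)$ yields $\tfrac{\mu_k}{\tau_k^2}=O(\tfrac1{\tau_k})$, every such term tends to $0$, so $\langle c,x_k\rangle+\delta_*(y_k/\tau_k\,|D)\to 0$.

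Finally I pass to a limit. Using strict primal feasibility (which bounds the dual near-optimal set) and strict dual feasibility (which bounds the primal near-optimal set), the sequences $\{x_k\}$ and $\{y_k/\tau_k\}$ lie in bounded sets, so I extract $x_k\to x^*$ and $y_k/\tau_k\to y^*$ along a subsequence. From $Ax_k+\tfrac1{\tau_k}z^0\in\inte D$ with $\tfrac1{\tau_k}z^0\to 0$ and $D$ closed, $Ax^*\in D$, so $x^*\in\mathcal F_p$; dividing $A^\top y_k - A^\top y^0 = -(\tau_k-1)c$ by $\tau_k$ gives $A^\top y^*=-c$, and since $D_*$ is a closed cone containing each $y_k/\tau_k$ we get $y^*\in D_*$, so $y^*\in\mathcal F_d$. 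As $\langle c,x_k\rangle\to\langle c,x^*\rangle$, the limit above forces $\delta_*(y_k/\tau_k\,|D)\to-\langle c,x^*\rangle$; lower semicontinuity of the support function then yields $\delta_*(y^*|D)\le-\langle c,x^*\rangle$, i.e. $\langle c,x^*\rangle+\delta_*(y^*|D)\le 0$. Weak duality (Lemma 2.1 of \cite{karimi_arxiv}, giving a nonnegative gap for every feasible pair) supplies the reverse inequality, so the gap is exactly $0$.

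The main obstacle I anticipate is the compactness step: justifying boundedness of $\{x_k\}$ and $\{y_k/\tau_k\}$ so that the limits exist. This is precisely where both strictness assumptions are genuinely used, each one bounding the near-optimal set on the opposite side, and it must be argued rather than assumed. A secondary technical point is verifying $\delta_*(y|D)+y_\tau\le 0$ along the path, which I would dispatch using the self-concordance estimate invoked above.
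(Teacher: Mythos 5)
Your route is the same as the paper's: combine Theorem \ref{thm:fes-meas-dd} (to force $\tau\to+\infty$ with $\mu=O(\tau)$) with Lemma \ref{lem:dg-bound-1} (to conclude $\langle c,x\rangle+\frac{1}{\tau}\delta_*(y|D)\to 0$ along the path), then extract feasible limit points $x^*$ and $y^*$ with zero gap. The verification of the hypothesis $\delta_*(y|D)+y_\tau\le 0$, the limit passage using closedness of $D$ and $D_*$, lower semicontinuity of $\delta_*(\cdot|D)$, and the appeal to weak duality for the reverse inequality are all fine and match the paper's proof sketch.

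The genuine gap is the one you yourself flag: boundedness of $\{x_k\}$ and $\{y_k/\tau_k\}$ is asserted (``strict primal feasibility bounds the dual near-optimal set and strict dual feasibility bounds the primal near-optimal set'') but never argued, and it is precisely the nontrivial content of this corollary. The paper closes it using the auxiliary point $\bar x(1)=\argmin_x\{\Phi(Ax)+\langle c,x\rangle\}$: since $A^\top\Phi'(A\bar x(1))=-c$ and the gap estimate eventually gives $\langle c,x(\mu)-\bar x(1)\rangle\le 0$, one gets $\langle \Phi'(A\bar x(1)),Ax(\mu)-A\bar x(1)\rangle\ge 0$, and the level-set boundedness property of s.c.\ barriers (\cite{nemirovski-notes}-Lemma 3.2.1) then bounds $\{x(\mu)\}$; for the dual, if $\{y(\mu)/\tau(\mu)\}$ were unbounded, a normalized subsequence would converge to a nonzero $h\in D_*$ with $A^\top h=0$, and $A\bar x(1)\in\inte D$ forces $\frac{1}{\tau}\delta_*(y|D)\to+\infty$, contradicting the gap tending to $0$. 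Some argument of this kind must be supplied (a recession-direction contradiction would also work on the primal side, using $\inte D_*\ne\emptyset$ and $\ker A=\{0\}$ to rule out nonzero recession directions of the near-optimal primal level set); without it the existence of $x^*$ and $y^*$ is not established.
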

\begin{proof}[Proof sketch]
For this proof, we can assume that $x^0 \in \mathcal F_p$ and $z^0=0$; therefore, for all the points on the central path we have $x(\mu) \in \mathcal F_p$. By Lemma \ref{lem:dg-bound-1}, the points on the central path $(x(\mu),\tau(\mu), y(\mu))$ satisfy the hypothesis of Theorem \ref{thm:fes-meas-dd} for every $\mu>0$ and so \eqref{eq:thm:fes-meas-dd-1} holds. This means $\lim_{\mu\rightarrow +\infty} \tau(\mu) = +\infty$. Also note that \eqref{eq:dg-bound-1} together with \eqref{eq:thm:fes-meas-dd-1} imply
\begin{eqnarray} \label{eq:SD-1}
\lim_{\mu \rightarrow +\infty} \left( \langle c, x(\mu) \rangle + \frac{1}{\tau(\mu)}\delta_*(y(\mu) | D)\right) =0. 
\end{eqnarray}
To complete the proof, we need to show there exists $\bar \mu >0$ such that the sets $\{x(\mu): \mu \geq \bar \mu\}$ and $\left\{{y(\mu)}/{\tau(\mu)}: \mu \geq \bar \mu\right\}$  are bounded and so have accumulation points $x^*$ and $y^*$ when $\mu \rightarrow +\infty$. For $x(\mu)$, $A\bar x(1) \in \inte D$ and $A^\top \Phi'(A \bar x(1)) = -c$ imply that there exists $\bar \mu$ such that for every $\mu \geq \bar \mu$, we have $
\langle \Phi'(A \bar x(1)), Ax(\mu)-A \bar x(1) \rangle =-\langle c, x(\mu)-\bar x(1) \rangle \geq 0
$. 
Then, by \cite{nemirovski-notes}-Lemma 3.2.1, the set $\{x(\mu): \mu \geq \bar \mu\}$ is bounded. For the dual part, if the set $\left\{{y(\mu)}/{\tau(\mu)}: \mu \geq \bar \mu\right\}$ is unbounded (seeking a contradiction), there exists a sequence $\{\mu^k\}$ such that $\|{y(\mu^k)}/{\tau(\mu^k)}\| \rightarrow +\infty$ and so $h:= \lim_{k\rightarrow+\infty} \frac{(y(\mu^k)/\tau(\mu^k)}{\|(y(\mu^k)/\tau(\mu^k)\|}$ is in $D_*$ and satisfies $A^\top h=0$. Then, $A\bar x(1) \in \inte D$ implies that $\frac{1}{\tau(\mu^k)}\delta_*\left(y(\mu^k)|D\right) \rightarrow +\infty$, which contradicts \eqref{eq:SD-1}. 
\end{proof}
Let us now consider a more general case where we just know that the primal and dual problems are feasible. 
\begin{lemma}  \label{lem:dd-11}
	Assume that there exist $ \bar x \in \mathcal F_p$ and $ \bar y \in \mathcal F_d$ with duality gap equal to $\langle c, \bar x \rangle + \delta_*(\bar y|D)=\Lambda$. For every point  $(x,\tau,y) \in Q_{DD}$,  $\kappa$-close to the central path, the variable $\tau$ satisfies 
	\begin{eqnarray} \label{eq:dd-outcome-3}
		((\xi-1) \vartheta-\kappa \sqrt{\vartheta}) \frac{\mu(x,\tau,y)}{\tau} \leq  \xi \vartheta + \langle y^0- \bar y, z^0-A \bar x  \rangle+\tau \Lambda.  
	\end{eqnarray}
\end{lemma}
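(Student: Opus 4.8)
The plan is to combine two elementary support-function inequalities, coming from the feasibility of $\bar x$ and $\bar y$, with the upper bound on $\delta_*(y|D)$ supplied by Lemma \ref{lem:dg-bound-1}. Before starting, I would record the identities that encode the hypotheses. Since $\bar y \in \mathcal F_d$ we have $A^\top \bar y = -c$, hence $\langle c, \bar x\rangle = -\langle \bar y, A\bar x\rangle$; combined with $\langle c, \bar x\rangle + \delta_*(\bar y|D) = \Lambda$ this gives $\delta_*(\bar y|D) = \Lambda + \langle \bar y, A\bar x\rangle$ and $\langle c, \bar x\rangle = \Lambda - \delta_*(\bar y|D)$. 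I would also keep at hand $y_{\tau,0} = -\langle y^0, z^0\rangle - \xi\vartheta$ from \eqref{starting-points-copy-2} and the two defining relations of $Q_{DD}$: writing $w := Ax + \frac{1}{\tau}z^0$, we have $w \in \inte D$ and $A^\top y = A^\top y^0 - (\tau-1)c$.

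Next I would produce the two inequalities by pairing a feasible point of one problem with the dual object of the other. Pairing $w \in D$ with $\bar y \in D_*$ gives $\langle \bar y, w\rangle \leq \delta_*(\bar y|D)$, and expanding the left-hand side via $A^\top \bar y = -c$ turns this into
\[
-\langle c, x\rangle + \frac{1}{\tau}\langle \bar y, z^0\rangle \leq \delta_*(\bar y|D).
\]
Symmetrically, pairing $A\bar x \in D$ with $y \in \inte D_*$ gives $\langle y, A\bar x\rangle \leq \delta_*(y|D)$, and substituting $A^\top y = A^\top y^0 - (\tau-1)c$ together with $\langle c,\bar x\rangle = \Lambda - \delta_*(\bar y|D)$ yields
\[
\langle y^0, A\bar x\rangle - (\tau-1)\bigl(\Lambda - \delta_*(\bar y|D)\bigr) \leq \delta_*(y|D).
\]

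The core of the argument is a three-term chain. I would bound $\delta_*(y|D)$ from above using the right-hand inequality of \eqref{eq:dg-bound-1} (multiplied through by $\tau > 0$) and insert this into the second displayed inequality, leaving $-\tau\langle c, x\rangle$ as the only unresolved quantity; I would then eliminate it with the first displayed inequality (also multiplied by $\tau$). At this point every occurrence of $\langle c, x\rangle$ cancels, the three $\mu/\tau$ terms inherited from Lemma \ref{lem:dg-bound-1} collect into $-\frac{\mu}{\tau}\bigl((\xi-1)\vartheta - \kappa\sqrt{\vartheta}\bigr)$ — which is where the coefficient on the left-hand side of \eqref{eq:dd-outcome-3} originates, and it is positive by the standing assumption $\xi - 1 - \kappa > 0$ — and the two $\delta_*(\bar y|D)$ contributions combine to $\delta_*(\bar y|D) + (\tau-1)\Lambda$. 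After isolating the $\mu/\tau$ term, the remaining step is bookkeeping: I would substitute $\langle y^0, z^0\rangle = -y_{\tau,0} - \xi\vartheta$ and $\langle \bar y, A\bar x\rangle = \delta_*(\bar y|D) - \Lambda$ into the expansion of $\xi\vartheta + \langle y^0 - \bar y, z^0 - A\bar x\rangle + \tau\Lambda$ and verify that it reduces to the right-hand side just obtained, which proves \eqref{eq:dd-outcome-3}. I expect the only genuine hazard to be sign discipline — using the upper (not lower) bound of Lemma \ref{lem:dg-bound-1} and preserving the direction of each inequality when multiplying by $\tau$ — since the algebra itself is routine once the two support-function inequalities and the correct bound are assembled.
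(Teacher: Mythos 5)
Your proof is correct and follows essentially the same route as the paper: both arguments combine the upper bound of Lemma \ref{lem:dg-bound-1} with the two support-function pairings $\langle y, A\bar x\rangle \leq \delta_*(y|D)$ and $\langle \bar y, Ax+\tfrac{1}{\tau}z^0\rangle \leq \delta_*(\bar y|D)$, use $A^\top y = A^\top y^0-(\tau-1)c$ and $A^\top \bar y=-c$, and then substitute $y_{\tau,0}=-\langle y^0,z^0\rangle-\xi\vartheta$ to reassemble the right-hand side of \eqref{eq:dd-outcome-3}. The only cosmetic difference is that you carry $\delta_*(\bar y|D)$ explicitly where the paper writes $\Lambda-\langle c,\bar x\rangle$.
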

\begin{proof}
Let $(x,\tau,y)$ be an arbitrary point in $Q_{DD}$. Utilizing $\langle y, A \bar x \rangle \leq \delta_*(y|D)$ in \eqref{eq:dg-bound-1}, we get
\begin{eqnarray} \label{eq:dd-outcome-4}
\frac{(\xi-1) \mu \vartheta-\mu\kappa \sqrt{\vartheta}}{\tau} \leq  -y_{\tau,0} - \tau \langle c , x \rangle -\langle y, A \bar x \rangle. 
\end{eqnarray}
Also note that from $\langle c, \bar x \rangle + \delta_*(\bar y|D)=\Lambda$ we have
\begin{eqnarray} \label{eq:dd-outcome-5}
\langle \bar y, z \rangle  \leq - \langle c, \bar x \rangle+\Lambda, \ \ \ \forall z \in D. 
\end{eqnarray}
Then,
\begin{eqnarray} \label{eq:dd-outcome-6}
-\langle c,  x \rangle &=&\langle  \bar y,  A x \rangle = \langle  \bar y,  A x + \frac{1}{\tau} z^0  \rangle -\langle  \bar y,  \frac{1}{\tau} z^0 \rangle  \nonumber \\
&\leq& - \langle c, \bar x \rangle+\Lambda -\langle  \bar y,  \frac{1}{\tau} z^0 \rangle,  \ \ \ \text{using \eqref{eq:dd-outcome-5}},
\end{eqnarray}
and also using $A^\top y = A^\top y^0 - (\tau-1) c$ we can easily get
\begin{eqnarray} \label{eq:dd-outcome-7}
\langle y, A \bar x \rangle = \langle  A^\top y^0 - (\tau-1) c ,  \bar x \rangle. 
\end{eqnarray}
Using \eqref{eq:dd-outcome-6}  and \eqref{eq:dd-outcome-7} in \eqref{eq:dd-outcome-4}, we have
\begin{eqnarray} \label{eq:dd-outcome-8}
\frac{(\xi-1) \mu \vartheta-\mu\kappa \sqrt{\vartheta}}{\tau} &\leq&  -y_{\tau,0}  - \langle c, \bar x \rangle -\langle  \bar y,  z^0 \rangle- \langle   y^0  ,  A \bar x \rangle +\tau \Lambda \nonumber \\
&=& \xi \vartheta+ \langle y^0 , z^0 \rangle - \langle c, \bar x \rangle -\langle  \bar y,  z^0 \rangle- \langle   y^0  ,  A \bar x \rangle + \tau \Lambda,
\end{eqnarray}
where the last equation is by substituting $y_{\tau,0}=-\langle y^0 , z^0 \rangle - \xi \vartheta$ from \eqref{starting-points-copy-2}. Then, we use $c=-A^\top \bar y$ to conclude \eqref{eq:dd-outcome-3}. 
\end{proof}
When there exist $\bar x \in \mathcal F_p$ and  $\bar y \in \mathcal F_d$ with zero duality gap,  we can rewrite \eqref{eq:dd-outcome-3} as
\begin{eqnarray} \label{eq:dd-outcome-3-2}
 \mu(x,\tau,y) \leq  \left[ \frac{  \xi \vartheta + \langle y^0- \bar y, z^0-A \bar x  \rangle}{(\xi-1) \vartheta-\kappa \sqrt{\vartheta}}\right] \tau, 
	\end{eqnarray}
which shows a lower bound for the rate of increase of $\tau$ with respect to $\mu$.

Similar to \cite{infea-2}, we define a point $(x,\tau,y) \in Q_{DD}$ an \emph{$\epsilon$-solution} of our problem if
\begin{eqnarray} \label{eq:eps-solution}
\max \left\{\frac{1}{\tau} \ , \ \frac{\vartheta \mu}{\tau^2} \right\} \leq \epsilon. 
\end{eqnarray}
Theorem \ref{thm:complexity-result}, Theorem \ref{thm:fes-meas-dd} and Lemma \ref{lem:dd-11} yield the following theorem for detecting an $\epsilon$-solution.

\begin{theorem}  \label{thm:solvable-com}
(a) Assume we have strict primal-dual feasibility for the Domain-Driven problem \eqref{main-p}. Then, the PtPCA algorithm returns an $\epsilon$-solution in number of iterations bounded by
\begin{eqnarray*}
	O\left(\sqrt{\vartheta} \ln \left(\frac{\vartheta}{\sigma_f \epsilon}\right)\right).
\end{eqnarray*}

\noindent (b) Assume there exist $\bar x \in \mathcal F_p$ and  $\bar y \in \mathcal F_d$ with zero duality gap. In view of Lemma \ref{lem:dd-11}, let 
\[
B:=\frac{\xi \vartheta + \min \{ \langle y^0- \bar y, z^0-A \bar x  \rangle: \langle c,\bar x\rangle+\delta_*(\bar y|D) =0, \bar x \in \mathcal F_p, \bar y \in \mathcal F_d \}}{(\xi-1)\vartheta-\kappa \sqrt{\vartheta}}.
\]
Then, the PtPCA algorithm returns an $\epsilon$-solution in number of iterations bounded by
\begin{eqnarray*}
	O\left(\sqrt{\vartheta} \ln \left(\frac{\vartheta B}{ \epsilon}\right)\right).
\end{eqnarray*}
\end{theorem}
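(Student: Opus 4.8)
The plan is to reduce both parts to a single template: exhibit an explicit threshold $\mu^*$ such that any $\kappa$-close point $(x,\tau,y)\in Q_{DD}$ with $\mu(x,\tau,y)\geq\mu^*$ is already an $\epsilon$-solution in the sense of \eqref{eq:eps-solution}, and then convert $\mu\geq\mu^*$ into an iteration count via Theorem \ref{thm:complexity-result}. Concretely, \eqref{eq:thm-com} gives $\mu\geq\exp(\gamma N/\sqrt\vartheta)$ after $N$ iterations, so $\mu\geq\mu^*$ is guaranteed once $N\geq(\sqrt\vartheta/\gamma)\ln\mu^*$; hence it suffices to produce $\mu^*$ and simplify $\ln\mu^*$ in each case.

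For part (a) I would first verify that the output of PtPCA meets the extra hypothesis of Theorem \ref{thm:fes-meas-dd}. A $\kappa$-close point lies in $Q_{DD}$, and multiplying the upper estimate in \eqref{eq:dg-bound-1} by $\tau$ collapses the $y_{\tau,0}$ terms and leaves
\[
\delta_*(y|D)+y_\tau \ \leq\ \frac{\mu\vartheta}{\tau}\Bigl(1-\xi+\tfrac{\kappa}{\sqrt\vartheta}\Bigr)\ \leq\ 0,
\]
where the final inequality uses $\vartheta\geq 1$ and the standing assumption $\xi-1-\kappa>0$. Theorem \ref{thm:fes-meas-dd} then gives $\tau-1\geq\sigma_f\mu-1/\sigma_f$, so for $\mu\geq 2/\sigma_f^2$ we have $\tau\geq\tfrac12\sigma_f\mu$, whence $1/\tau\leq 2/(\sigma_f\mu)$ and $\vartheta\mu/\tau^2\leq 4\vartheta/(\sigma_f^2\mu)$. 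Both terms fall below $\epsilon$ once $\mu\geq\mu^*:=4\vartheta/(\sigma_f^2\epsilon)$, which also dominates the auxiliary bound $2/\sigma_f^2$. The decisive observation is that the quadratic growth $\tau\sim\sigma_f\mu$ makes the $\vartheta\mu/\tau^2$ term---not $1/\tau$---set the threshold. Finally $\ln\mu^*=\ln(\vartheta/(\sigma_f^2\epsilon))\leq 2\ln(\vartheta/(\sigma_f\epsilon))$ because $\ln(\vartheta/\epsilon)\geq 0$, yielding the stated $O(\sqrt\vartheta\,\ln(\vartheta/(\sigma_f\epsilon)))$.

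Part (b) runs identically with Lemma \ref{lem:dd-11} replacing Theorem \ref{thm:fes-meas-dd}; here no hypothesis check is needed since $\kappa$-closeness is exactly what the lemma assumes. Its zero-gap specialization \eqref{eq:dd-outcome-3-2} reads $\tau\geq\mu/B$ with the $B$ of the statement (the minimum over admissible $(\bar x,\bar y)$, chosen to make the bound sharpest). Substituting into \eqref{eq:eps-solution} gives $1/\tau\leq B/\mu$ and $\vartheta\mu/\tau^2\leq\vartheta B^2/\mu$, so both drop below $\epsilon$ once $\mu\geq\mu^*:=\vartheta B^2/\epsilon$. Again $\ln(\vartheta B^2/\epsilon)\leq 2\ln(\vartheta B/\epsilon)$ because $\ln(\vartheta/\epsilon)\geq 0$, and Theorem \ref{thm:complexity-result} delivers $O(\sqrt\vartheta\,\ln(\vartheta B/\epsilon))$.

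The steps are more bookkeeping than conceptual, and the only places that need care are: (i) confirming that PtPCA's $\kappa$-close output satisfies $\delta_*(y|D)+y_\tau\leq 0$, the one extra hypothesis of Theorem \ref{thm:fes-meas-dd}, which is precisely where the neighborhood condition $\xi-1-\kappa>0$ enters; and (ii) noticing that replacing $\sigma_f^2$ by $\sigma_f$ (resp.\ $B^2$ by $B$) inside the logarithm costs only a factor of two, absorbed into the $O(\cdot)$, since $\ln(\vartheta/\epsilon)\geq 0$. The remaining work---tracking which of the two $\epsilon$-solution conditions is binding---is immediate once $\tau$ is known to grow linearly in $\mu$.
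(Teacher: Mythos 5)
Your proof is correct and follows exactly the route the paper intends: the paper derives this theorem in one line from Theorem \ref{thm:complexity-result}, Theorem \ref{thm:fes-meas-dd} and Lemma \ref{lem:dd-11}, and you have supplied precisely the missing bookkeeping (including the needed verification, via the right-hand inequality of \eqref{eq:dg-bound-1} and $\xi-1-\kappa>0$, that $\kappa$-close points satisfy the hypothesis $\delta_*(y|D)+y_\tau\leq 0$ of Theorem \ref{thm:fes-meas-dd}). No gaps.
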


\section{Weak infeasibility and unboundedness detector} \label{sec:weak}
We start this section by showing that for the points close to the central path, variable $\tau$ stays away from zero. 
\begin{lemma} \label{lem:tau_bound}
Consider two points $(x,\tau,y)$ and $(\bar x,\bar \tau, \bar y)$ in $Q_{DD}$ with $\mu:=\mu(x,\tau,y)$ and $\bar \mu:=\mu(\bar x,\bar \tau, \bar y)$ that are $\kappa$-close to the central path. Then
\begin{eqnarray} \label{eq:bound_tau_3}
\mu \bar \tau^2+\bar \mu \tau^2 \leq  \frac{\xi}{\xi-1-\kappa} \tau \bar \tau(\mu + \bar \mu). 
\end{eqnarray}
\end{lemma}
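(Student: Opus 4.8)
The plan is to attach to each point of $Q_{DD}$ a single ``lifted'' vector and to exploit the symmetric bilinear form $\bullet$ introduced in the proof of Theorem \ref{thm:fes-meas-dd}. For $(x,\tau,y)\in Q_{DD}$ write $w:=Ax+\frac1\tau z^0\in\inte D$, $s:=\frac\tau\mu y\in\inte D_*$ (note $\mu>0$ is forced by \eqref{eq:lem:dg-bound-1-2}, since $\Phi'_*$ is defined on the cone $\inte D_*$ and $\tau>0$), and $y_\tau:=y_{\tau,0}+\tau\langle c,x\rangle$; set $\mathbf p:=(\tau w,\tau,y,y_\tau)$, and define $\bar{\mathbf p}:=(\bar\tau\bar w,\bar\tau,\bar y,\bar y_\tau)$ analogously. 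First I would record the self-product identity, which is only a rewriting of \eqref{eq:dd-4-2}: since $-y_{\tau,0}-\tau\langle c,x\rangle=-y_\tau$ and $\langle y,w\rangle=\frac1\tau\langle y,\tau w\rangle$, one gets $\tau y_\tau+\langle y,\tau w\rangle=-\xi\vartheta\mu$, hence $\mathbf p\bullet\mathbf p=2(\langle y,\tau w\rangle+\tau y_\tau)=-2\xi\vartheta\mu$, and likewise $\bar{\mathbf p}\bullet\bar{\mathbf p}=-2\xi\vartheta\bar\mu$.

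The crucial step is a cross-product identity: I claim $\mathbf p\bullet\bar{\mathbf p}=-\xi\vartheta(\mu+\bar\mu)$, which, given the two self-products, is equivalent to $(\mathbf p-\bar{\mathbf p})\bullet(\mathbf p-\bar{\mathbf p})=0$. This isotropy is where the affine constraints defining $Q_{DD}$ do all the work: $\tau w-\bar\tau\bar w=A(\tau x-\bar\tau\bar x)\in\range(A)$, subtracting the two equations $A^\top y-A^\top y^0=-(\tau-1)c$ gives $A^\top(y-\bar y)=-(\tau-\bar\tau)c$, and $y_\tau-\bar y_\tau=\langle c,\tau x-\bar\tau\bar x\rangle$. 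Substituting these into $(\mathbf p-\bar{\mathbf p})\bullet(\mathbf p-\bar{\mathbf p})=2\langle y-\bar y,\tau w-\bar\tau\bar w\rangle+2(\tau-\bar\tau)(y_\tau-\bar y_\tau)$ makes the two terms cancel exactly. I expect getting this cancellation (and its sign) right to be the main obstacle; everything after it is bookkeeping.

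Next I would re-expand the \emph{same} cross product directly, using $y=\frac\mu\tau s$, $\bar y=\frac{\bar\mu}{\bar\tau}\bar s$ and the self-product identity in the form $\tau y_\tau=-\mu(\xi\vartheta+\langle s,w\rangle)$. Writing $P:=\frac{\mu\bar\tau}{\tau}+\frac{\bar\mu\tau}{\bar\tau}$ this gives
\[
\mathbf p\bullet\bar{\mathbf p}=-\xi\vartheta P+\frac{\mu\bar\tau}{\tau}\big(\langle s,\bar w\rangle-\langle s,w\rangle\big)+\frac{\bar\mu\tau}{\bar\tau}\big(\langle\bar s,w\rangle-\langle\bar s,\bar w\rangle\big),
\]
so comparison with the identity of the previous paragraph yields $\xi\vartheta(P-\mu-\bar\mu)$ equal to the two weighted inner-product differences. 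To bound them I would use $\bar w\in D$ together with \cite{interior-book}-Theorem 2.4.2 at the point $\Phi'_*(s)$ (where $\Phi'(\Phi'_*(s))=s$): $\langle s,\bar w\rangle\le\delta_*(s|D)\le\langle s,\Phi'_*(s)\rangle+\vartheta$, hence $\langle s,\bar w\rangle-\langle s,w\rangle\le\vartheta+\langle s,\Phi'_*(s)-w\rangle$. The last term is controlled by the $\kappa$-closeness \eqref{eq:lem:dg-bound-1-2} and the Cauchy--Schwarz inequality \eqref{eq:CS}: $\langle s,\Phi'_*(s)-w\rangle\le\|s\|_{\Phi''_*(s)}\,\|\Phi'_*(s)-w\|_{[\Phi''_*(s)]^{-1}}\le\kappa\|s\|_{\Phi''_*(s)}$, and $\|s\|_{\Phi''_*(s)}\le\sqrt\vartheta$ because $\Phi''_*(s)=[\Phi''(\Phi'_*(s))]^{-1}$ and the defining inequality of a $\vartheta$-s.c.\ barrier gives $\langle[\Phi'']^{-1}\Phi',\Phi'\rangle\le\vartheta$. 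Thus each difference is at most $\vartheta+\kappa\sqrt\vartheta$, and the second pair obeys the same bound by symmetry.

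Finally I would assemble the estimate. Since the weights $\frac{\mu\bar\tau}{\tau},\frac{\bar\mu\tau}{\bar\tau}$ are nonnegative and sum to $P$,
\[
\xi\vartheta(P-\mu-\bar\mu)\le P(\vartheta+\kappa\sqrt\vartheta),
\]
so $P\big[\vartheta(\xi-1)-\kappa\sqrt\vartheta\big]\le\xi\vartheta(\mu+\bar\mu)$. The bracket is positive, since $\xi-1-\kappa>0$ and $\vartheta\ge1$ force $\sqrt\vartheta(\xi-1)>\kappa$, so dividing gives $P\le\frac{\xi}{\xi-1-\kappa/\sqrt\vartheta}(\mu+\bar\mu)\le\frac{\xi}{\xi-1-\kappa}(\mu+\bar\mu)$; multiplying through by $\tau\bar\tau>0$ is precisely \eqref{eq:bound_tau_3}. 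The only delicate points are the sign and cancellation in the isotropy identity and the tracking of the $\sqrt\vartheta$ factor, which in fact sharpens rather than weakens the stated constant.
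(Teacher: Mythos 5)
Your proof is correct, and it is at bottom the same symmetrization argument the paper uses, just organized differently. The paper's proof takes the right-hand inequality of Lemma \ref{lem:dg-bound-1} at $(x,\tau,y)$, lower-bounds $\delta_*(y|D)$ by $\langle y, A\bar x+\frac{1}{\bar\tau}z^0\rangle$, converts the cross terms using the affine constraints of $Q_{DD}$ and the formula \eqref{eq:dd-4-2} for $\bar\mu$, and then adds the inequality obtained by swapping the two points; the left-hand sides cancel exactly and \eqref{eq:bound_tau_4} drops out. Your isotropy identity $(\mathbf p-\bar{\mathbf p})\bullet(\mathbf p-\bar{\mathbf p})=0$ is precisely that cancellation made structural, and your self-product identities $\mathbf p\bullet\mathbf p=-2\xi\vartheta\mu$ encode the same use of \eqref{eq:dd-4-2}. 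The one genuine difference is that where the paper simply invokes Lemma \ref{lem:dg-bound-1}, you re-derive the needed half of it from first principles: the bound $\langle s,\bar w\rangle-\langle s,w\rangle\le\vartheta+\kappa\sqrt\vartheta$ via \cite{interior-book}-Theorem 2.4.2, the $\kappa$-closeness condition \eqref{eq:lem:dg-bound-1-2}, Cauchy--Schwarz, and $\|s\|_{\Phi''_*(s)}\le\sqrt\vartheta$. This makes your write-up self-contained (and shows the lemma really only needs the upper half of \eqref{eq:dg-bound-1}), at the cost of some length; it also lets you keep the $\kappa/\sqrt\vartheta$ factor, yielding the marginally sharper constant $\xi/(\xi-1-\kappa/\sqrt{\vartheta})$ before you relax it to the stated $\xi/(\xi-1-\kappa)$, exactly as the paper does when it passes from $\kappa\sqrt\vartheta+\vartheta$ to $(1+\kappa)\vartheta$. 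All the delicate points you flagged (the sign in the isotropy identity, positivity of the weights $\mu\bar\tau/\tau$ and $\bar\mu\tau/\bar\tau$, and positivity of the bracket $\vartheta(\xi-1)-\kappa\sqrt\vartheta$) check out.
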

\begin{proof}
By considering the fact that $\langle \frac{y}{\tau}, A \bar x + \frac{1}{\bar \tau} z^0 \rangle \leq \frac{1}{\tau} \delta_*(y|D) $, multiplying both sides of the RHS inequality in \eqref{eq:dg-bound-1} with $\tau \bar \tau$, using $A^\top y = A^\top y^0-(\tau-1)c$, and reordering the terms we get
\begin{eqnarray} \label{eq:bound_tau_1}
\langle y,z^0 \rangle + \tau \bar \tau\langle c,x \rangle -\tau \bar \tau\langle c, \bar x \rangle +\langle A^\top y^0+c,  \bar \tau \bar x  \rangle  +\bar \tau y_{\tau,0} \leq  - \frac{(\xi-1-\kappa)\mu \bar \tau  \vartheta}{\tau}.
\end{eqnarray}
If we use the third line of \eqref{eq:dd-4-2} for $\bar \mu$, we can simplify \eqref{eq:bound_tau_1} as
\begin{eqnarray} \label{eq:bound_tau_2}
\langle y,z^0 \rangle -\langle \bar y,z^0 \rangle+ \tau \bar \tau\langle c,x \rangle -\tau \bar \tau\langle c, \bar x \rangle  \leq  - \frac{(\xi-1-\kappa)\mu \bar \tau  \vartheta}{\tau} + \xi \bar \mu \vartheta.
\end{eqnarray}
Considering the symmetry we have in \eqref{eq:bound_tau_2}, if we change the role of $(x,\tau,y)$ and $(\bar x,\bar \tau, \bar y)$ and repeat the argument, we get a similar inequality as \eqref{eq:bound_tau_2} with the LHS negated and $\mu$ and $\bar \mu$ swapped in the RHS. By adding these two inequalities and canceling out $\vartheta$ from both sides, we get 
\begin{eqnarray} \label{eq:bound_tau_4}
0 \leq -(\xi-1-\kappa)\left(\frac{\mu \bar \tau }{\tau} + \frac{\bar \mu \tau }{\bar \tau}\right)+ \xi (\mu + \bar \mu). 
\end{eqnarray}
By reordering the terms we obtain \eqref{eq:bound_tau_3}. 
\end{proof}
\begin{corollary}\label{cor:bound_tau}
For all the points $(x,\tau,y) \in Q_{DD}$ with $\mu(x,\tau,y) \geq 1$ which are $\kappa$-close to the central path, we have 
\begin{eqnarray} \label{eq:tau_bound}
\tau \geq \tau_{\xi,\kappa}:= \frac{\xi-1-\kappa}{2\xi}.
\end{eqnarray}
\end{corollary}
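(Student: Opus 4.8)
The corollary asserts a uniform lower bound $\tau \geq \tau_{\xi,\kappa} = \frac{\xi-1-\kappa}{2\xi}$ for every point $(x,\tau,y) \in Q_{DD}$ that is $\kappa$-close to the central path and satisfies $\mu(x,\tau,y) \geq 1$. The natural plan is to derive this directly from Lemma \ref{lem:tau_bound}, which I would specialize by choosing the second point $(\bar x, \bar \tau, \bar y)$ to be a convenient reference point whose $\tau$-coordinate and $\mu$-value are both known and controllable. The most obvious candidate is one of the initial/central-path points associated with the starting data \eqref{starting-points-copy-2}, where $\bar \tau = 1$ and $\bar \mu$ is a fixed (small) known quantity; this turns the two-point inequality \eqref{eq:bound_tau_3} into a quadratic (or linear) inequality in the single unknown $\tau$.

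**Main step.** With $\bar \tau = 1$ fixed, inequality \eqref{eq:bound_tau_3} becomes
\[
\mu + \bar\mu \, \tau^2 \leq \frac{\xi}{\xi-1-\kappa}\, \tau (\mu + \bar\mu).
\]
I would view this as a constraint forcing $\tau$ away from $0$: rearranging to isolate the linear-in-$\tau$ behaviour, the term $\mu$ on the left (which is $\geq 1$ by hypothesis) cannot be dominated by the right-hand side unless $\tau$ is bounded below. Concretely, dropping the nonnegative term $\bar\mu\,\tau^2$ from the left gives $\mu \leq \frac{\xi}{\xi-1-\kappa}\,\tau(\mu+\bar\mu)$, hence
\[
\tau \geq \frac{\xi-1-\kappa}{\xi}\cdot\frac{\mu}{\mu+\bar\mu}.
\]
If the reference point is chosen so that $\bar\mu \leq \mu$ (e.g. $\bar\mu \leq 1 \leq \mu$, which holds for a suitable initial point), then $\frac{\mu}{\mu+\bar\mu} \geq \tfrac12$, yielding exactly $\tau \geq \frac{\xi-1-\kappa}{2\xi} = \tau_{\xi,\kappa}$.

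**The delicate point.** The hard part is justifying the existence and properties of the reference point $(\bar x,\bar\tau,\bar y)$: it must lie in $Q_{DD}$, be $\kappa$-close to the central path, have $\bar\tau = 1$, and satisfy $\bar\mu \leq 1$ (or whatever bound makes $\frac{\mu}{\mu+\bar\mu} \geq \tfrac12$). I expect one can take the initial point built from \eqref{starting-points-copy-2}, since $z^0 \in \inte(D)$, $y^0 = \Phi'(z^0)$, and $y_{\tau,0} = -\langle y^0,z^0\rangle - \xi\vartheta$ are precisely engineered so that $\tau = 1$ and $\mu$ evaluates to a controlled value; using the third line of \eqref{eq:dd-4-2} at this point should give $\bar\mu$ exactly, and the choice of $y^0$ as the gradient makes the proximity condition \eqref{eq:lem:dg-bound-1-2} trivially satisfied (distance zero). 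One must double-check that the standing assumption $\xi - 1 - \kappa > 0$ guarantees $\tau_{\xi,\kappa} > 0$ and that the inequality direction is preserved throughout; these are routine given the earlier hypotheses, so the essential content really is the single substitution $\bar\tau = 1$ into Lemma \ref{lem:tau_bound} followed by discarding the quadratic term.
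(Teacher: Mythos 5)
Your proof is correct and is essentially the intended derivation: the paper states the corollary immediately after Lemma \ref{lem:tau_bound} with no separate argument, and the implicit proof is exactly your specialization to the initial point $(x,\tau,y)=(0,1,y^0)$, which lies in $Q_{DD}$, is exactly on the central path (proximity measure zero, since $\Phi_*'(y^0)=z^0$), and has $\bar\mu=1$ by the third line of \eqref{eq:dd-4-2} together with $y_{\tau,0}=-\langle y^0,z^0\rangle-\xi\vartheta$. Dropping the quadratic term and using $\mu\geq 1=\bar\mu$ then gives $\tau\geq\frac{\xi-1-\kappa}{2\xi}$ exactly as you wrote.
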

Assume that the problem is (strongly) infeasible or unbounded, but not ill-posed. Then, at least one of $\sigma_p$ or $\sigma_d$ defined in \eqref{tf} is positive and Lemma \ref{lem:dd-con-par} implies that $\tau$ is bounded. Because $\tau$ is bounded and we have 
\[
\frac{\tau}{\mu} A^\top y = \frac{ \tau}{\mu} A^\top y^0 - \frac{\tau (\tau-1)}{\mu} c,
\]
$\frac{\tau}{\mu}y$ converges to a point in the kernel of $A^\top$ when $\mu$ goes to $+\infty$. If we can confirm that $\frac{\tau}{\mu} \delta_* (y|D) < 0$, then we have an approximate certificate of infeasibility. On the other hand, if $\langle c,x \rangle$ becomes a very large negative number, then $Ax$ dominates the other term in $Ax+\frac{1}{\tau} z^0$ (by Corollary \ref{cor:bound_tau}, $\tau \geq \tau_{\xi,\kappa}$) and we have an approximate certificate of unboundedness; since for every vector $y_c$ such that $A^\top y_c =-c$, we have $\|y_c\| \|Ax\| \geq |\langle c, x \rangle|$.

\noindent We say $(x,\tau,y) \in Q_{DD}$ is an \emph{$\epsilon$-certificate of infeasibility} if it satisfies  
\begin{eqnarray} \label{eq:epsilon-cert-2}
	\frac{\tau}{\mu}  \delta_* \left (\left.y\right |D \right)  < -1, \ \ \ \frac{\tau}{\mu} \|A^\top y\| \leq \epsilon. 
\end{eqnarray}
We say $(x,\tau,y) \in Q_{DD}$ is an \emph{$\epsilon$-certificate of unboundedness} if it satisfies  
\begin{eqnarray} \label{eq:epsilon-cert-3}
	\langle c,x \rangle   < -\frac{1}{\epsilon}.
\end{eqnarray}
When we are $\kappa$-close to the central path, by Lemma \ref{lem:dg-bound-1}, we have
\begin{eqnarray} \label{eq:epsilon-cert-4}
\langle c,x \rangle +\frac{1}{\tau} \delta_*\left(\left.y\right|D \right)  \leq \frac{-y_{\tau,0}}{\tau} - \left ((\xi-1)-\frac{\kappa}{\sqrt{\vartheta}}\right)\frac{\mu \vartheta }{\tau^2}.
\end{eqnarray}
Using this inequality, we can prove the following theorem:
\begin{theorem} \label{thm:weak-detector} (weak detector) Assume that at least one of $\sigma_p$ or $\sigma_d$ is positive. Then, the PtPCA  algorithm returns either an $\epsilon$-certificate of infeasibility or an $\epsilon$-certificate of unboundedness in number of iterations bounded by
\begin{eqnarray} \label{eq:epsilon-cert-5}
O\left(\sqrt{\vartheta} \ln \left( \frac{1}{\vartheta\epsilon}  \min \left\{ \frac{\|z^0\|}{\sigma_p},\frac{\|y^0\|}{\sigma_d} \right\} \right)\right).
\end{eqnarray}
\end{theorem}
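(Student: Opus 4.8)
The plan is to combine the exponential growth of $\mu$ along the central-path neighborhood (Theorem \ref{thm:complexity-result}) with a uniform upper bound on $\tau$ that is available \emph{precisely because} $\sigma_p>0$ or $\sigma_d>0$. Set $T:=\min\{\|z^0\|/\sigma_p,\ \|y^0\|/\sigma_d\}$, with the convention that a term is $+\infty$ when the corresponding measure vanishes. I would first show that every $(x,\tau,y)\in Q_{DD}$ obeys $\tau\le T$. Indeed, the constraints in \eqref{QDD-copy-2} force $Ax+\frac1\tau z^0\in D$ and $A^\top y-A^\top y^0=-(\tau-1)c$ with $y\in D_*$, so whenever $\tau\ge 1$ the value $\tau$ is admissible in the two suprema defining $t_p(z^0)$ and $t_d(y^0)$ in \eqref{t'-2}; hence $\tau\le\min\{t_p(z^0),t_d(y^0)\}$. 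Combining this with Lemma \ref{lem:dd-con-par} gives $\tau\le T$ (using whichever measure is positive), and for $\tau<1$ the bound is trivial since $T\ge 1$.

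With $\tau$ now trapped in $[\tau_{\xi,\kappa},T]$ (the lower bound from Corollary \ref{cor:bound_tau}, valid once $\mu\ge 1$), the core of the argument is a dichotomy driven by \eqref{eq:epsilon-cert-4}. Fix the returned point. If $\langle c,x\rangle<-1/\epsilon$, then \eqref{eq:epsilon-cert-3} already certifies unboundedness and we are done. Otherwise $\langle c,x\rangle\ge -1/\epsilon$; substituting this into \eqref{eq:epsilon-cert-4} and multiplying through by $\tau^2/\mu$, then using $\tau_{\xi,\kappa}\le\tau\le T$, yields
\[
\frac{\tau}{\mu}\,\delta_*(y|D)\ \le\ -\Big((\xi-1)-\tfrac{\kappa}{\sqrt\vartheta}\Big)\vartheta\ +\ \frac{1}{\mu}\Big(T\,|y_{\tau,0}|+\tfrac{T^2}{\epsilon}\Big).
\]
Because $(\xi-1)-\kappa>0$, the leading term is a negative multiple of $\vartheta$, so the right-hand side falls below $-1$ as soon as $\mu$ exceeds a threshold $\mu^\ast=O\!\big(T^2/(\vartheta\epsilon)\big)$ (up to the lower-order constant-data term), establishing the first half of \eqref{eq:epsilon-cert-2}.

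It remains to secure the second half of \eqref{eq:epsilon-cert-2}, namely $\frac{\tau}{\mu}\|A^\top y\|\le\epsilon$. Using $A^\top y=A^\top y^0-(\tau-1)c$ together with $\tau\le T$, one has $\tau\|A^\top y\|\le T(\|A^\top y^0\|+(T-1)\|c\|)$, a quantity independent of $\mu$, so this inequality holds once $\mu$ passes a second threshold of order $T^2/\epsilon$. Taking the larger of the two thresholds produces a single $\mu^\ast$, polynomial in $T$ and $1/\epsilon$, beyond which either \eqref{eq:epsilon-cert-3} holds or both conditions of \eqref{eq:epsilon-cert-2} hold.

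Finally I would convert this into an iteration count. By Theorem \ref{thm:complexity-result}, $\mu\ge\exp(\gamma N/\sqrt\vartheta)$ after $N$ iterations, so $\mu\ge\mu^\ast$ is reached within $N=O(\sqrt\vartheta\,\ln\mu^\ast)$ iterations; since $T\ge 1$ and one may assume $\vartheta\epsilon\le 1$, the square and the constant-data factors inside $\ln\mu^\ast$ are absorbed, giving $\ln\mu^\ast=O\!\big(\ln(\frac{1}{\vartheta\epsilon}T)\big)$, which is the claimed bound. The main obstacle is the bookkeeping in the dichotomy step: one must keep $\tau$ simultaneously bounded above by $T$ and below by $\tau_{\xi,\kappa}$ while tracking how the constant-data terms ($y_{\tau,0}$, $\|A^\top y^0\|$, $\|c\|$) and the factor $T^2$ collapse into the single clean logarithmic factor $\ln(\frac{1}{\vartheta\epsilon}\min\{\|z^0\|/\sigma_p,\|y^0\|/\sigma_d\})$; a secondary subtlety is that the $\tau\le T$ argument exploits only the $t\ge 1$ portion of the definitions in \eqref{t'-2}, so the regime $\tau<1$ must be dispatched separately.
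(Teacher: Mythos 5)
Your proposal is correct and follows essentially the same route as the paper's (much more compressed) proof: bound $\tau$ via Lemma \ref{lem:dd-con-par}, run the dichotomy on inequality \eqref{eq:epsilon-cert-4} to force either \eqref{eq:epsilon-cert-3} or \eqref{eq:epsilon-cert-2}, and convert the resulting threshold on $\mu$ into an iteration count via Theorem \ref{thm:complexity-result}. The extra details you supply (the explicit $\tau\le T$ argument, the separate treatment of $\frac{\tau}{\mu}\|A^\top y\|\le\epsilon$, and the logarithmic bookkeeping) are exactly the steps the paper leaves implicit.
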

\begin{proof}
In view of \eqref{eq:epsilon-cert-4}, we want $\frac{\mu \vartheta }{\tau^2}$ to be as large as $O(1/\epsilon)$ and $\tau$ is bounded by the result of Lemma \ref{lem:dd-con-par}. We also know how the PtPCA algorithm  increases $\mu$ by Theorem \ref{thm:complexity-result}. We can assume that  $ (\xi-1)-\frac{\kappa}{\sqrt{\vartheta}} > 0$, then, when $\frac{\mu \vartheta }{\tau^2}$ gets large enough, \eqref{eq:epsilon-cert-4} implies that at least one of \eqref{eq:epsilon-cert-2} or \eqref{eq:epsilon-cert-3} happens. Putting together these facts  gives us the desired result. 
\end{proof}
Let us see how the weak detector behaves in the infeasibility and unbounded cases we defined above. 
\subsection{Infeasibility} 
If the problem is infeasible, but not ill-posed, we have $\sigma_p >0$, and so by Lemma \ref{lem:dd-con-par}, $t_p(z^0) < +\infty$. 
For a given $t >0$, let us define
\begin{eqnarray} \label{eq:bound-1}
B_{p,t}:=\sup \left\{ \|x\|: \exists \tau \in (t,t_p(z^0)) \ \ \text{s.t.} \ Ax+\frac{1}{\tau} z^0 \in D \right\}. 
\end{eqnarray}
By using \eqref{eq:CS}, for every point $(x,\tau,y) \in Q_{DD}$, we have
\begin{eqnarray} \label{eq:dd-outcome-9}
-\langle c ,x \rangle \leq \|c\| B_{p,\tau_{\xi,\kappa}}. 
\end{eqnarray}
We will show that strict  infeasibility is a sufficient condition for $B_{p,t} < +\infty$ for every $t \in (0,t_p(z^0))$. If we multiply both sides of \eqref{eq:dg-bound-1} by $\tau^2/\mu$ and reorder the terms, we have
\begin{eqnarray} \label{eq:lem:dd-13-1}
\frac{\tau}{\mu} \delta_* \left (y|D \right)  \leq \frac{\tau}{\mu}[-y_{\tau,0} - \tau \langle c , x \rangle]  - [(\xi-1)\vartheta-\kappa \sqrt{\vartheta}].
\end{eqnarray}
Therefore, when $-\langle c ,x \rangle$ is bounded, for every point $(x,\tau,y) \in Q_{DD}$ $\kappa$-close to the central path with a large  $\mu$, we have $\delta_*(y|D) < 0$. By the proof of Theorem \ref{thm:weak-detector} and \eqref{eq:dd-outcome-9}, the weak detector returns an $\epsilon$-certificate of infeasibility in number of iterations bounded by
\[
O \left ( \sqrt{\vartheta}  \ln \left (t_p(z^0)  B_{p,\tau_{\xi,\kappa}} +  \frac{t_p(z^0)}{\vartheta} \frac{1}{\epsilon}  \right )\right).
\]


\subsection{{Unboundedness}}
If the problem is unbounded, but not ill-posed, we have $\sigma_d >0$, and so $t_d(y^0) < +\infty$ for every $y^0 \in \inte D_*$ by Lemma \ref{lem:dd-con-par}. For a given $t >0$, let us define
\begin{eqnarray} \label{eq:bound-2}
B_{d,t}:=\sup \left\{ \|y\|: y \in D_*, \exists \tau \in (t,t_d(y^0)) \ \ \text{s.t.} \ A^\top y= A^\top y^0-(\tau-1)c \right\}. 
\end{eqnarray}
We will show that strict unboundedness is a sufficient condition for $B_{d,t} < +\infty$ for every $t \in (0,t_d(y^0))$. Then, for every $(x,\tau,y) \in Q_{DD}$ we have
\begin{eqnarray} \label{eq:dd-outcome-39-2}
\frac{\tau}{\mu} \delta_* \left(\left. y\right| D\right)\geq \frac{\tau}{\mu} \langle y, z^0 \rangle  \geq - \frac{B_{d,\tau_{\xi,\kappa}}}{\mu} t_d(y^0) \|z^0\|.
\end{eqnarray}
Hence, for every point $(x,\tau,y) \in Q_{DD}$ with $\mu \geq 2B_{d,\tau_{\xi,\kappa}} t_d(y^0) \|z^0\|$  we have $\frac{\tau}{\mu}\delta_* \left( \left.y\right|D\right)  \geq -\frac 12$. Therefore, by Theorem \ref{thm:weak-detector}, our weak detector returns an $\epsilon$-certificate of unboundedness in polynomial time. 
In fact, for every $\epsilon > 0$, by using the proof of Theorem \ref{thm:weak-detector}, after at most
\begin{eqnarray} \label{eq:dd-outcome-25-2}
	O \left ( \sqrt{\vartheta}  \ln \left (t_d(y^0)  B_{d,\tau_{\xi,\kappa}} +  \frac{t_d(y^0)}{\vartheta} \frac{1}{\epsilon}  \right )\right)
\end{eqnarray}
iterations, the weak detector returns an $\epsilon$-certificate of unboundedness.  
\section{Strict infeasibility and unboundedness detector} \label{sec:strict}
In this section, we show that in the case of strict infeasibility and unboundedness, we can actually find a certificate for the exact problem in polynomial time. The idea is that we need to project our current point onto a proper linear subspace using a suitable norm.
\subsection{Infeasibility}
By definition, if there exists  $ y \in  \inte{D_*}$ such that $A^\top  y=0$ and $\delta_*(y|D)=-1$, we have strict primal infeasibility. To get an exact certificate, we show how to project our current point $y$ onto the kernel of $A^\top$ with respect to a suitable norm. 
Let us first show that for all the points $(x,\tau)$ such that $Ax+\frac{1}{\tau}z^0 \in D$,  norm of $x$ is bounded. 
\begin{lemma}  \label{lem:dd-16}
Assume that there is a point $y \in \inte D_*$ such that $A^\top y =0$ and $\delta_*(y|D)=-1$. Then, $t_p(z^0) < +\infty$ and for a given $t \in (0,t_p(z^0))$, we have $B_{p,t} < +\infty$. 
\end{lemma}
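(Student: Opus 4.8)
The plan is to treat the two assertions in turn, the first being essentially immediate and the second requiring a recession/compactness argument. For $t_p(z^0) < +\infty$, I would invoke Lemma~\ref{lem:outcome-infeas}(a): the existence of $y \in D_*$ with $A^\top y = 0$ and $\delta_*(y|D) = -1$ is equivalent to $\sigma_p > 0$. Combined with Lemma~\ref{lem:dd-con-par}(a), which gives $\sigma_p \leq \|z^0\|/t_p(z^0)$, this immediately yields $t_p(z^0) \leq \|z^0\|/\sigma_p < +\infty$.

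For the boundedness of $B_{p,t}$, I would fix $t \in (0, t_p(z^0))$ and argue by contradiction. Suppose $B_{p,t} = +\infty$; then there are sequences $\{x^k\}$ and $\{\tau_k\} \subset (t, t_p(z^0))$ with $z^k := Ax^k + \frac{1}{\tau_k} z^0 \in D$ and $\|x^k\| \to +\infty$. Since $(t, t_p(z^0))$ is bounded away from $0$, the shifts $\frac{1}{\tau_k} z^0$ are uniformly bounded; after passing to a subsequence I may also assume $x^k/\|x^k\| \to h$ with $\|h\| = 1$, and $Ah \neq 0$ because $\ker A = \{0\}$.

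The crux is to extract a genuine recession direction of $D$. As $t > 0$ keeps $\frac{1}{\tau_k} z^0$ bounded while $\|Ax^k\| \to +\infty$ by injectivity of $A$, one has $\|z^k\| \to +\infty$ and $z^k/\|z^k\| \to Ah/\|Ah\|$. Since $D$ is closed and convex and $z^k \in D$, the standard recession-cone characterization gives $Ah/\|Ah\| \in \rec(D)$, so $Ah \in \rec(D) \setminus \{0\}$. I would then exploit strictness: because $y \in \inte D_*$ and $D_*$ is the polar cone of $\rec(D)$ in the sense of \eqref{eq:leg-conj-2}, every nonzero $w \in \rec(D)$ obeys $\langle y, w \rangle < 0$ — indeed $y + \epsilon\, w/\|w\| \in D_*$ for small $\epsilon > 0$ forces $\langle y, w \rangle \leq -\epsilon \|w\| < 0$. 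Taking $w = Ah$ gives $\langle y, Ah \rangle < 0$, contradicting $\langle y, Ah \rangle = \langle A^\top y, h \rangle = 0$. Hence $B_{p,t} < +\infty$.

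The main obstacle is precisely this middle passage: one must verify that the negligible shift $\frac{1}{\tau_k} z^0$ does not spoil the normalized limit (which is exactly where the hypothesis $t > 0$ enters) and that injectivity of $A$ makes the limiting direction $Ah$ nonzero, and then convert the interiority $y \in \inte D_*$ into the strict inequality $\langle y, Ah \rangle < 0$. I note that the support-function value $\delta_*(y|D) = -1$ is needed only for the first assertion (through Lemma~\ref{lem:outcome-infeas}); for the boundedness of the $x$-vectors only $A^\top y = 0$ together with $y \in \inte D_*$ is actually used.
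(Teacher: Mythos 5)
Your proof is correct, and its overall architecture matches the paper's: the first assertion via Lemma~\ref{lem:outcome-infeas}(a) combined with Lemma~\ref{lem:dd-con-par}(a) is exactly the paper's argument, and for the second assertion both you and the paper argue by contradiction by extracting a nonzero recession direction $Ah \in \rec(D) \cap \range(A)$ (you in fact supply the limiting argument $z^k/\|z^k\| \to Ah/\|Ah\|$ that the paper only asserts). Where you genuinely diverge is in how the contradiction is reached. The paper passes through the barrier: it takes $z \in \inte D$ with $A^\top \Phi'(z) = 0$ (available because $\Phi'$ maps $\inte D$ onto $\inte D_*$ and $y \in \inte D_* \cap \ker A^\top$) and invokes the self-concordant-barrier inequality $-\langle \Phi'(z), Ah\rangle \geq \|Ah\|_{\Phi''(z)}$ for recession directions, forcing $Ah = 0$. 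You instead argue purely convex-geometrically: since $y \in \inte D_*$ and $D_*$ is the polar of $\rec(D)$, every nonzero $w \in \rec(D)$ satisfies $\langle y, w\rangle < 0$, contradicting $\langle y, Ah\rangle = \langle A^\top y, h\rangle = 0$. Your route is more elementary — it uses no properties of $\Phi$ at all and works for any closed convex $D$ with the stated dual point — while the paper's route stays within the self-concordance toolkit it uses elsewhere (and implicitly relies on the nondegeneracy of $\Phi''$). Your closing observation that $\delta_*(y|D) = -1$ is needed only for the first assertion is accurate and consistent with the paper's proof as well.
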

\begin{proof}
By Lemma \ref{lem:outcome-infeas} we have $\sigma_p>0$ and so by Lemma \ref{lem:dd-con-par}, $t_p(z^0) < +\infty$. Suppose that $B_{p,t} = +\infty$ (we are seeking a contradiction). Then, since $\tau$ is bounded by $t_p(z^0)$, the set $D$ must have a nonzero recession direction in the range of $A$; we write it as $Ah$. Consider a point $z \in \inte D$ such that $A^\top \Phi'(z)=0$, which exists because we have a point $y \in \inte D_*$ such that $A^\top y =0$. Then, by a property of s.c.\ barriers (see for example \cite{nemirovski-notes}-Corollary 3.2.1), we have
\begin{eqnarray} \label{eq:lem:dd-16-3}
0= \langle \Phi'(z), Ah \rangle  \geq \sqrt{\langle Ah, \Phi''(z) Ah \rangle}  \ \ \Rightarrow \ \ Ah=0.
\end{eqnarray}
This is a contradiction. 
\end{proof}
For the main proof of this section, we define a set of points that get close to the points on the central path for large enough $\mu$. Consider the following optimization problem for each $\tau \geq \tau_{\xi,\kappa}$, where $\tau_{\xi,\kappa}$ is the lower bound we have for $\tau$ by Corollary \ref{cor:bound_tau}. 
\begin{eqnarray} \label{eq:dd-outcome-14}
	\begin{array}{crcl}
		\min & \Phi_*(y) &&\\
		& A^\top y &=&0 \\
		& \langle y, z^0 \rangle &=&-\tau \xi \vartheta.
	\end{array}
\end{eqnarray}
Note that this problem is feasible by strict infeasibility assumption. It also has an optimal solution for every $\tau$ such that there exists $x$ with $Ax +\frac{1}{\tau}z^0 \in D$. This holds since the s.c.\ function $\Phi_*$ is bounded from below on the feasible region \cite{interior-book}-Theorem 2.2.3; by Fenchel-Young inequality:
\[
\Phi_*(y) \geq \langle y,Ax+\frac{1}{\tau} z^0 \rangle - \Phi\left(Ax+\frac{1}{\tau} z^0\right) = -\xi\vartheta - \Phi\left(Ax+\frac{1}{\tau} z^0\right).
\] Let us denote the solution of this problem by $\bar y(\tau)$. If we write the optimality conditions for $\bar y(\tau)$, we have $\Phi'_*(\bar y(\tau)) = A \bar x(\tau) + \frac{1}{t(\tau)} z^0$, for some $\bar x(\tau)$ and $t(\tau)$. We claim that $t(\tau) \geq \frac{\tau_{\xi,\kappa}\xi}{\tau_{\xi,\kappa}\xi+1}$.  By \cite{interior-book}-Theorem 2.4.2, we have
\begin{eqnarray} \label{eq:dd-outcome-38}
	\langle \bar y(\tau),z^0 \rangle -\vartheta  \leq \langle \bar y(\tau), \Phi'_*(\bar y(\tau)) \rangle= \langle \bar y(\tau), A \bar x(\tau) + \frac{1}{t(\tau)} z^0 \rangle= \frac{1}{t(\tau)} \langle \bar y(\tau),z^0 \rangle  \nonumber \\
	\Rightarrow  \ \ -\tau \xi \vartheta - \vartheta  \leq \frac{-1}{t(\tau)}  \tau  \xi \vartheta \ \ \Rightarrow  \ \ \frac{1}{t(\tau)}    \leq \frac{\tau \xi +1}{\tau \xi}  \leq \frac{ \tau_{\xi,\kappa}\xi +1}{ \tau_{\xi,\kappa}\xi}.
\end{eqnarray}
Now we are ready to prove the following lemma which shows $ \bar y(\tau)$ gets very close to $\frac{\tau}{\mu} y$ in the local norm when $\mu$ is large enough. For the proof, we use a property that if $f$ is a s.c.\ function, then, for every  $x$ and $y$ in its domain, we have \cite{lectures-book}-Theorem 5.1.8  ($r:=  \|y-x\|_{f''(x)}$)
	\begin{eqnarray} \label{property-ineq-1-2}
	\langle f'(x) - f'(y) , y-x \rangle \geq  \frac{r^2}{1+r}.
	\end{eqnarray}
\begin{lemma}  \label{lem:dd-17}
For $\mu \geq 1$, consider a point $(x,\tau,y):=(x(\mu),\tau(\mu),y(\mu))$ on the central path and $\bar y(\tau)$ as the solution of \eqref{eq:dd-outcome-14}. Then, there exists a constant $B_{inf}$ depending on the initial point $z^0$, $B_{p,\tau_{\xi,\kappa}}$, and $t_p(z^0)$ such that 
\begin{eqnarray} \label{eq:dd-outcome-20}
\frac{\left\|\bar y(\tau) - \frac{\tau}{\mu} y\right\|^2_{\Phi_*''(\frac{\tau}{\mu} y)}}{1+\left\|\bar y(\tau) - \frac{\tau}{\mu} y\right\|_{\Phi_*''(\frac{\tau}{\mu} y)}}   \leq   \frac{B_{inf}}{\mu}.
\end{eqnarray}
\end{lemma}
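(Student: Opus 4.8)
The plan is to apply the self-concordance inequality \eqref{property-ineq-1-2} to $f=\Phi_*$ at the two points $\frac{\tau}{\mu}y$ and $\bar y(\tau)$, both of which lie in $\dom\Phi_*=\inte D_*$ (the former since $D_*$ is a cone and $\frac{\tau}{\mu}>0$, the latter as the minimizer of \eqref{eq:dd-outcome-14}). With $r:=\big\|\bar y(\tau)-\frac{\tau}{\mu}y\big\|_{\Phi_*''(\frac{\tau}{\mu}y)}$, the left-hand side of \eqref{eq:dd-outcome-20} is exactly $r^2/(1+r)$, so \eqref{property-ineq-1-2} reduces the lemma to producing an $O(1/\mu)$ bound on the single inner product
\[
I:=\left\langle \Phi_*'\!\left(\tfrac{\tau}{\mu}y\right)-\Phi_*'(\bar y(\tau)),\ \bar y(\tau)-\tfrac{\tau}{\mu}y\right\rangle.
\]
Both gradients are explicit: the central-path equation \eqref{trans-dd-path-1-copy-2}-(c), after applying $\Phi_*'$, gives $\Phi_*'(\frac{\tau}{\mu}y)=Ax+\frac{1}{\tau}z^0$, while the first-order optimality conditions for \eqref{eq:dd-outcome-14} (as recorded just before \eqref{eq:dd-outcome-38}) give $\Phi_*'(\bar y(\tau))=A\bar x(\tau)+\frac{1}{t(\tau)}z^0$.

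Substituting these, I would split $I$ into a range-of-$A$ part and a $z^0$ part:
\[
I=\left\langle A(x-\bar x(\tau)),\ \bar y(\tau)-\tfrac{\tau}{\mu}y\right\rangle+\left(\tfrac{1}{\tau}-\tfrac{1}{t(\tau)}\right)\left\langle z^0,\ \bar y(\tau)-\tfrac{\tau}{\mu}y\right\rangle.
\]
In the first part I move $A$ across the inner product and use the constraint $A^\top\bar y(\tau)=0$ from \eqref{eq:dd-outcome-14} together with $A^\top y=A^\top y^0-(\tau-1)c$ from \eqref{trans-dd-path-1-copy-2}-(b); this leaves a prefactor $\frac{\tau}{\mu}$ multiplying $\langle x-\bar x(\tau),\,A^\top y^0-(\tau-1)c\rangle$. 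In the second part I use the equality constraint $\langle\bar y(\tau),z^0\rangle=-\tau\xi\vartheta$ of \eqref{eq:dd-outcome-14} together with the crucial substitution of the third expression for $\mu$ in \eqref{eq:dd-4-2}, namely $\langle y,z^0\rangle=-\xi\vartheta\mu-\tau\big(y_{\tau,0}+\langle A^\top y^0+c,x\rangle\big)$. This substitution cancels the $\pm\tau\xi\vartheta$ pieces and exhibits $\langle z^0,\bar y(\tau)-\frac{\tau}{\mu}y\rangle=\frac{\tau^2}{\mu}\big(y_{\tau,0}+\langle A^\top y^0+c,x\rangle\big)$, again with an explicit $1/\mu$.

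It then remains to verify that every surviving factor is bounded by a constant independent of $\mu$, giving $I\le B_{inf}/\mu$. The strict-infeasibility hypothesis enters precisely here, through Lemma \ref{lem:dd-16}: it forces $t_p(z^0)<+\infty$ and $B_{p,t}<+\infty$, which bounds $\tau\le t_p(z^0)$ from above (and $\tau\ge\tau_{\xi,\kappa}$ from below by Corollary \ref{cor:bound_tau}), bounds $\|x\|\le B_{p,\tau_{\xi,\kappa}}$, and—since $A\bar x(\tau)+\frac{1}{t(\tau)}z^0\in D$ with $t(\tau)$ bounded below by $\tau_{\xi,\kappa}\xi/(\tau_{\xi,\kappa}\xi+1)$ via \eqref{eq:dd-outcome-38}—bounds $\|\bar x(\tau)\|$ through $B_{p,\cdot}$ as well. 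The same estimate \eqref{eq:dd-outcome-38} controls $|\frac{1}{\tau}-\frac{1}{t(\tau)}|$, while $y_{\tau,0}$, $\|y^0\|$, $\|c\|$, $\|z^0\|$ are fixed data. Collecting these yields $B_{inf}$ depending only on $z^0$, $B_{p,\tau_{\xi,\kappa}}$, and $t_p(z^0)$ (besides the absolute constants $\xi,\kappa,\vartheta$). The main obstacle is the $z^0$ part: a naive expansion leaves several terms of apparent order $\xi\vartheta$, and one must recognize that they cancel only after rewriting $\langle y,z^0\rangle$ through the definition of $\mu$ in \eqref{eq:dd-4-2}; without that step the estimate degrades to $O(1)$ instead of the required $O(1/\mu)$.
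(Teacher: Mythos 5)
Your proof is correct and follows essentially the same route as the paper's: apply the self-concordance bound \eqref{property-ineq-1-2} to $\Phi_*$ at $\frac{\tau}{\mu}y$ and $\bar y(\tau)$, substitute the explicit gradients $Ax+\frac{1}{\tau}z^0$ and $A\bar x(\tau)+\frac{1}{t(\tau)}z^0$, split into the range-of-$A$ and $z^0$ parts, extract the $1/\mu$ factor from the constraints of \eqref{eq:dd-outcome-14} together with the expression for $\mu$ in \eqref{eq:dd-4-2}, and bound the surviving factors via $t_p(z^0)$, $B_{p,\tau_{\xi,\kappa}}$, Corollary \ref{cor:bound_tau} and \eqref{eq:dd-outcome-38} --- exactly the paper's chain \eqref{eq:dd-outcome-15}--\eqref{eq:dd-outcome-19}. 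The only (immaterial) discrepancy is that your $I$ carries the orientation of \eqref{property-ineq-1-2} as literally printed, whereas the paper's own proof pairs $\frac{\tau}{\mu}y-\bar y(\tau)$ with $\Phi_*'(\frac{\tau}{\mu}y)-\Phi_*'(\bar y(\tau))$ (the orientation for which the product is nonnegative); since your estimates of the two parts are magnitude bounds, this sign convention does not affect the argument.
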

\begin{proof}
Note that by the definition of $\tau_{\xi,\kappa}$ and our choices of $\kappa$ and $\xi$, we have $\tau_{\xi,\kappa}\leq  \frac{\tau_{\xi,\kappa} \xi}{\tau_{\xi,\kappa} \xi+1}$. 
By using property \eqref{property-ineq-1-2} of s.c.\ functions, we have
\begin{eqnarray} \label{eq:dd-outcome-15}
\frac{\left\|\bar y(\tau) - \frac{\tau}{\mu} y\right\|^2_{\Phi_*''(\frac{\tau}{\mu} y)}}{1+\left\|\bar y(\tau) - \frac{\tau}{\mu} y\right\|_{\Phi_*''(\frac{\tau}{\mu} y)}}   \leq   \langle \frac{\tau}{\mu} y - \bar y(\tau) , Ax + \frac{1}{\tau} z^0 -A \bar x(\tau) -  \frac{1}{t(\tau)} z^0 \rangle.
\end{eqnarray}
Because $\bar y(\tau)$ is the solution of \eqref{eq:dd-outcome-14}, we have
\begin{eqnarray} \label{eq:dd-outcome-16}
		\langle  -\bar y(\tau) , Ax + \frac{1}{\tau} z^0 -A \bar x(\tau) -  \frac{1}{t(\tau)} z^0 \rangle = \left( \frac{1}{\tau} -  \frac{1}{t(\tau)}\right) \tau \xi \vartheta. 
\end{eqnarray}
We also have
\begin{eqnarray} \label{eq:dd-outcome-17}
		\langle  \frac{\tau}{\mu} y , Ax + \frac{1}{\tau} z^0 -A \bar x(\tau) -  \frac{1}{t(\tau)} z^0 \rangle =  \frac{\tau}{\mu} \langle y , A x - A \bar x (\tau) \rangle +  \left( \frac{1}{\tau} -  \frac{1}{t(\tau)}\right) \langle \frac{\tau}{\mu} y, z^0\rangle. 
\end{eqnarray}
For the first term of \eqref{eq:dd-outcome-17}, using Lemma \ref{lem:dd-16}, we have
\begin{eqnarray} \label{eq:dd-outcome-18}
\frac{\tau}{\mu} \langle y , A x - A \bar x (\tau) \rangle &=&  \frac{\tau}{\mu} \langle A^\top y ,  x -  \bar x (\tau) \rangle  =  \frac{\tau}{\mu} \langle A^\top y^0+(\tau-1)c ,  x -  \bar x (\tau) \rangle\nonumber \\
&\leq& \frac{t_p(z^0)}{\mu} \left(2B_{p,\tau_{\xi,\kappa}}(\|A^\top y^0\|+(t_p(z^0)-1)\|c\|)\right). 
\end{eqnarray}
For the second term of \eqref{eq:dd-outcome-17}, by using the third line in \eqref{eq:dd-4-2}, we have
\begin{eqnarray} \label{eq:dd-outcome-19}
\frac{\tau}{\mu}  \langle y , z^0 \rangle & =&  \frac{ \tau^2 (-y_{\tau,0} -  \langle c,  x \rangle - \langle y^0 , A x  \rangle)}{\mu} - \tau \xi \vartheta   \nonumber \\
&\leq & \frac{t^2_p(z^0)}{\mu} \left(|y_{\tau,0}|+B_{p, \tau_{\xi,\kappa}}\|A^\top y^0+c\|\right)- \tau \xi \vartheta. 
\end{eqnarray}
\eqref{eq:dd-outcome-18} and \eqref{eq:dd-outcome-19} give an upper bound for \eqref{eq:dd-outcome-17}, which we add to \eqref{eq:dd-outcome-16} to get an upper bound for the RHS of \eqref{eq:dd-outcome-15}.     Therefore, \eqref{eq:dd-outcome-20} holds for 
\begin{eqnarray*} \label{eq:dd-outcome-21}
B_{inf}:=t_p(z^0) \left(2B_{p,\tau_{\xi,\kappa}}(\|A^\top y^0\|+(t_p(z^0)-1)\|c\|)\right)+\frac{2t^2_p(z^0)}{\tau_{\xi,\kappa}} \left(|y_{\tau,0}|+B_{p,\tau_{\xi,\kappa}} \|A^\top y^0+c\| \right). 
\end{eqnarray*}
\end{proof}
Now we can prove the following proposition for our strict detector:
\begin{proposition} \label{prop:strict-infea} (Strict primal infeasibility detector) Assume that there exists $\hat y \in \inte D_*$ such that $A^\top \hat y = 0$ and $\delta_*(\hat y|D) =-1$. Then, a modification of the PtPCA algorithm returns a point $y \in D_*$ with $A^\top y =0$ and $\delta_*(y|D) \leq -1$ in number of iterations bounded by
\begin{eqnarray} \label{eq:prop:strict-infea}
O\left ( \sqrt{\vartheta}  \ln \left(t_p(z^0)B_{p,\tau_{\xi,\kappa}} \right) \right). 
\end{eqnarray}
\end{proposition}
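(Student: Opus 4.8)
The plan is to run the PtPCA algorithm until $\mu:=\mu(x,\tau,y)$ is large, and then, as the promised \emph{modification}, to replace the approximate dual certificate $\frac{\tau}{\mu}y$ (which has $A^\top(\frac{\tau}{\mu}y)\to 0$ but is never exactly in the kernel) by the exact minimizer $\bar y(\tau)$ of problem \eqref{eq:dd-outcome-14}. By construction $\bar y(\tau)\in\inte D_*$ and $A^\top\bar y(\tau)=0$ \emph{exactly}, so the only thing left to verify is that $\delta_*(\bar y(\tau)|D)<0$: once that holds, rescaling $\bar y(\tau)$ by the positive factor $-1/\delta_*(\bar y(\tau)|D)$ produces a point $y\in D_*$ with $A^\top y=0$ and $\delta_*(y|D)=-1$, which is the desired exact certificate.

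First I would record the a priori bounds that strict infeasibility buys. By Lemma \ref{lem:dd-16}, $t_p(z^0)<+\infty$ and $B_{p,\tau_{\xi,\kappa}}<+\infty$; hence along the central path $\tau\le t_p(z^0)$ is bounded by Lemma \ref{lem:dd-con-par}, and since $\tau\ge\tau_{\xi,\kappa}$ by Corollary \ref{cor:bound_tau}, both $\|x\|$ (through $B_{p,\tau_{\xi,\kappa}}$) and $-\langle c,x\rangle\le\|c\|B_{p,\tau_{\xi,\kappa}}$ are bounded. These are exactly the ingredients of the constant $B_{inf}$ in Lemma \ref{lem:dd-17}.

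The heart of the argument is the upper bound on $\delta_*(\bar y(\tau)|D)$. Writing $\bar z:=\Phi_*'(\bar y(\tau))=A\bar x(\tau)+\frac{1}{t(\tau)}z^0\in\inte D$ and using $\bar y(\tau)=\Phi'(\bar z)$, the self-concordant barrier inequality \cite{interior-book}-Theorem 2.4.2 (the same estimate used earlier to show $\sigma_f>0$) yields, together with $A^\top\bar y(\tau)=0$ and the normalization $\langle\bar y(\tau),z^0\rangle=-\tau\xi\vartheta$ from \eqref{eq:dd-outcome-14},
\[
\delta_*(\bar y(\tau)|D)\ \le\ \langle\bar y(\tau),\bar z\rangle+\vartheta\ =\ \frac{1}{t(\tau)}\langle\bar y(\tau),z^0\rangle+\vartheta\ =\ \vartheta\left(1-\frac{\tau\xi}{t(\tau)}\right).
\]
Thus it suffices to show $t(\tau)<\tau\xi$. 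Here Lemma \ref{lem:dd-17} enters: it forces $\bar y(\tau)\to\frac{\tau}{\mu}y$ in the local norm as $\mu\to+\infty$, hence $\bar z=\Phi_*'(\bar y(\tau))\to\Phi_*'(\frac{\tau}{\mu}y)=Ax+\frac{1}{\tau}z^0$, the central-path primal point. Feeding this back into $\bar z=A\bar x(\tau)+\frac{1}{t(\tau)}z^0$ and projecting onto the complement of $\range(A)$ (where $z^0$ has nonzero component, since strict infeasibility forces $z^0\notin\range(A)$, as otherwise $t_p(z^0)=+\infty$) isolates $\frac{1}{t(\tau)}$ and pins $t(\tau)$ to $\tau$ up to an error that vanishes with $\mu$. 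Since $\xi>1$, this gives $t(\tau)<\tau\xi$ once $\mu$ exceeds an explicit threshold; I would also carry along the lower bound $t(\tau)\ge\frac{\tau_{\xi,\kappa}\xi}{\tau_{\xi,\kappa}\xi+1}$ established before Lemma \ref{lem:dd-17} to keep the estimates uniform in $\tau$.

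For the iteration count, Theorem \ref{thm:complexity-result} shows that reaching a central-path point with $\mu\ge\mu^\ast$ costs $O(\sqrt{\vartheta}\ln\mu^\ast)$ iterations; since the threshold $\mu^\ast$ guaranteeing $\delta_*(\bar y(\tau)|D)<0$ is polynomial in $t_p(z^0)$, $B_{p,\tau_{\xi,\kappa}}$, and $\vartheta$ (through $B_{inf}$ and the separation $\tau\xi-t(\tau)$), one obtains $\ln\mu^\ast=O(\ln(t_p(z^0)B_{p,\tau_{\xi,\kappa}}))$ and hence the advertised bound \eqref{eq:prop:strict-infea}. The main obstacle is the middle step: converting the purely \emph{qualitative} convergence $\bar y(\tau)\to\frac{\tau}{\mu}y$ of Lemma \ref{lem:dd-17} into a \emph{quantitative}, explicit-in-$\mu$ separation $t(\tau)<\tau\xi$ with constants controlled by $t_p(z^0)$, $B_{p,\tau_{\xi,\kappa}}$, and $\vartheta$, because the passage from local-norm proximity of the dual points $\bar y(\tau)$ and $\frac{\tau}{\mu}y$ to proximity of the scalars $t(\tau)$ and $\tau$ (via the transverse component of $\Phi_*'$) must be bounded uniformly.
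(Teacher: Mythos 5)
Your overall architecture matches the paper's: run PtPCA until $\mu$ exceeds a threshold of order $B_{inf}$, invoke Lemma \ref{lem:dd-17} to place $\bar y(\tau)$ within local-norm distance $O(1)$ of $\frac{\tau}{\mu}y$, output an exact point of the kernel of $A^\top$, and certify $\delta_*(\cdot|D)<0$ via \cite{interior-book}-Theorem 2.4.2. But there is a genuine gap at precisely the step you flag as the ``main obstacle.'' Your certification of $\delta_*(\bar y(\tau)|D)<0$ is routed through the scalar inequality $t(\tau)<\tau\xi$, which you propose to extract by projecting the identity $\Phi_*'(\bar y(\tau))=A\bar x(\tau)+\frac{1}{t(\tau)}z^0$ onto the complement of $\range(A)$ and comparing with $Ax+\frac{1}{\tau}z^0$. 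Converting the local-norm proximity $\|\bar y(\tau)-\frac{\tau}{\mu}y\|_{\Phi_*''}=O(1/\mu)$ into Euclidean proximity of the transverse components of the gradients, and hence of the scalars $\frac{1}{t(\tau)}$ and $\frac{1}{\tau}$, requires controlling the condition number of $\Phi''$ (equivalently $[\Phi_*'']^{-1}$) along the direction of $z^0$ transverse to $\range(A)$. That quantity is \emph{not} bounded by $t_p(z^0)$, $B_{p,\tau_{\xi,\kappa}}$, and $\vartheta$, so even if the qualitative limit $t(\tau)\to\tau$ holds, the threshold $\mu^\ast$ you would obtain is not of the advertised size, and the complexity bound \eqref{eq:prop:strict-infea} does not follow. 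The gap is not cosmetic: the entire point of the local-norm machinery is to avoid exactly this kind of uncontrolled constant.

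The paper sidesteps the issue by never separating $t(\tau)$ from $\tau$. Its modified output is not $\bar y(\tau)$ but the projection $\hat y$ of $\frac{\tau}{\mu}y$, in the norm $\Phi_*''(\frac{\tau}{\mu}y)$, onto $\{y: A^\top y=0,\ \langle y,z^0\rangle\leq-0.9\,\tau\xi\vartheta\}$; Lemma \ref{lem:dd-17} serves only to guarantee (via the feasible point $\bar y(\tau(\mu))$ and the Dikin ellipsoid) that this projection lies in $\inte D_*$. The support function is then bounded directly: with $u:=Ax+\frac{1}{\tau}z^0$, one writes $\langle\hat y,\Phi_*'(\hat y)\rangle=\langle\hat y,\Phi_*'(\hat y)-u\rangle+\frac{1}{\tau}\langle\hat y,z^0\rangle\leq\sqrt{\vartheta}\,\|\Phi_*'(\hat y)-u\|_{[\Phi_*''(\hat y)]^{-1}}-0.9\,\xi\vartheta$, using $\|\hat y\|_{\Phi_*''(\hat y)}\leq\sqrt{\vartheta}$ and the explicitly imposed constraint $\langle\hat y,z^0\rangle\leq-0.9\,\tau\xi\vartheta$. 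Since the first term is $O(\sqrt{\vartheta})$ while the second is $-0.9\,\xi\vartheta$, choosing $\xi$ appropriately gives $\langle\hat y,\Phi_*'(\hat y)\rangle\leq-\tilde\xi\vartheta$ with $\tilde\xi>1$ and hence $\delta_*(\hat y|D)<0$ --- all with constants depending only on $\xi$, $\kappa$, $\vartheta$. You could rescue your variant by applying this same Cauchy--Schwarz argument directly to $\bar y(\tau)$ (for which $\frac{1}{\tau}\langle\bar y(\tau),z^0\rangle=-\xi\vartheta$ exactly), but as written the detour through $t(\tau)$ leaves the proof incomplete.
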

\begin{proof}
Assume that $\kappa$ is chosen such that for every point $(x,\tau,y)$ $\kappa$-close to the central path, we have $\|\frac{\tau(\mu)}{\mu} y(\mu) - \frac{\tau}{\mu} y\|_{\Phi_*''(\frac{\tau}{\mu} y)}\leq 0.1 $, where $\mu:=\mu(x,\tau,y)$, and $|\frac{\tau(\mu)}{\tau}-1| \leq 0.1$ \cite{karimi_arxiv}.  If the PtPCA algorithm is run until  we have $\mu \geq 10 B_{inf}$,  we get $\|\bar y(\tau(\mu)) - \frac{\tau(\mu)}{\mu} y(\mu)\|_{\Phi_*''(\frac{\tau(\mu)}{\mu} y(\mu))} \leq 0.1$ in view of Lemma \ref{lem:dd-17}. Now, if we project $\frac{\tau}{\mu} y$ on the set $\{y: A^\top y=0, \langle y, z^0 \rangle \leq - 0.9 \tau \xi \vartheta\}$ by the norm defined by $\Phi_*''(\frac{\tau}{\mu} y)$, the projection $\hat y$ must have a distance (in local norm) to $\bar y(\tau(\mu))$ smaller than $1$ and so it lies in $\inte D_*$. We just need to show that $\delta_*(\hat y|D) < 0$. We have (with $u:= Ax + \frac{1}{\tau} z^0$)
	\begin{eqnarray} \label{eq:dd-outcome-22}
		\langle \hat y, \Phi'_*(\hat y) \rangle &=& \langle \hat y, \Phi'_*(\hat y)-   u\rangle + \langle \hat y, Ax + \frac{1}{\tau} z^0 \rangle \nonumber \\
		&\leq &\|\hat y\|_{\Phi''_*(\hat y)} \|\Phi'_*(\hat y)-  u \|_{[\Phi''_*(\hat y)]^{-1}} + \frac{1}{\tau} \langle \hat y, z^0 \rangle,   \ \ \  \text{using $A^\top \hat y =0$} \nonumber \\
		&\leq&  \sqrt{\vartheta} \|\Phi'_*(\hat y)-  u \|_{[\Phi''_*(\hat y)]^{-1}} - 0.9 \xi \vartheta,  \ \ \  \text{using $\|\hat y\|_{\Phi''_*(\hat y)}  \leq \sqrt{\vartheta}$}.
	\end{eqnarray}
Since $(x,\tau,y)$ is $\kappa$-close to the central path, $\|\Phi'_*(\hat y)-  u\|_{[\Phi''_*(\hat y)]^{-1}}$ is smaller than 1, and so if $\xi$ is chosen properly we have $\langle \hat y, \Phi'_*(\hat y) \rangle  \leq -\tilde \xi \vartheta$ for some $\tilde \xi > 1$. By \cite{interior-book}-Theorem 2.4.2 we have $\delta_*(\hat y|D) <0$ as we want.
\end{proof}
The following remark shows that in the special case of conic optimization, the complexity result of Proposition \ref{prop:strict-infea} reduces to the one in  \cite{infea-2}.  
\begin{remark}
Assume that $D=K-b$ where $K$ is a convex cone equipped with a $\vartheta$-LH.s.c.\ barrier $\hat \Phi$ and that the system $Ax+b=z$, $z \in K$, is strictly infeasible. Then, for a given $z^0 \in \inte K$, the infeasibility measure in \cite{infea-2} is defined as
\begin{eqnarray} \label{eq:trict-infea-comp-1}
\rho_d:=\max\{\alpha: z^0-\alpha {\hat z} \in K, 1-\alpha {\hat\tau} \geq 0\},
\end{eqnarray}
where $({\hat z}, {\hat \tau})$ is the optimal solution of 
\begin{eqnarray} \label{eq:trict-infea-comp-2}
\min \{\hat \Phi( z)-\ln( \tau) : Ax+ z-  \tau b= z^0-b, x \in \R^n,  z \in K,  \tau \geq 0\}. 
\end{eqnarray}
It is proved in \cite{infea-2} that $\rho_d \leq 1$. By \cite{infea-2}-Lemma 4, for every feasible solution $(z,\tau)$ of problem \eqref{eq:trict-infea-comp-2}, we have
\begin{eqnarray} \label{eq:trict-infea-comp-3}
\|z-z^0\|^*_{\hat\Phi''(z^0)}+(\tau-1)^2 \leq \frac{\vartheta (\vartheta+1)}{\rho_d}. 
\end{eqnarray}
We claim that this inequality gives a bound for both $t_p(z^0)$ and $B_{p,\tau_{\xi,\kappa}}$ in \eqref{eq:prop:strict-infea}. As $D=K-b$, having a point $x \in \R^n$ and $\tau >0$ such that $Ax+\frac{1}{\tau} z^0 \in D$ implies there exists $z \in K$ such that 
\begin{eqnarray} \label{eq:trict-infea-comp-4}
Ax+\frac{1}{\tau} z^0=z-b \ \  \Rightarrow  \  A(-\tau x)+\tau z -(\tau+1)b=z^0-b. 
\end{eqnarray}
Therefore, by using \eqref{eq:trict-infea-comp-3}, both $t_p(z^0)$ and $B_{p,\tau_{\xi,\kappa}}$ are $O(\frac{\vartheta^2}{\rho_d})$ and the complexity bound in \eqref{eq:prop:strict-infea} reduces to 
\begin{eqnarray} \label{eq:trict-infea-comp-5}
O\left ( \sqrt{\vartheta}  \ln \left(\frac{\vartheta}{\rho_d}\right) \right),
\end{eqnarray}
as in \cite{infea-2}. 
\end{remark}

\subsection{Unboundedness}
We start with a lemma that if the problem is  strictly unbounded, the set of $y$ vectors is bounded. 
\begin{lemma} \label{lem:bound-2}
Assume that problem \eqref{main-p} is strictly unbounded. Then, for each given $t \in (0,t_d(y^0))$, we have $B_{d,t} < +\infty$. 
\end{lemma}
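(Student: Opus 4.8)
The plan is to argue by contradiction, exploiting that the feasible set in the definition of $B_{d,t}$ is the intersection of the cone $D_*$ with an affine slice whose position is controlled by a \emph{bounded} range of $\tau$. First I would record that strict unboundedness implies strong unboundedness (the hypothesis gives an $h$ with $Ah\in\inte(\rec(D))\subseteq\rec(D)$, together with $Ax\in\inte D$ for some $x$), so $\{x:Ax\in D\}$ is nonempty and Lemma \ref{lem:outcome-infeas}(b) yields $\sigma_d>0$; then Lemma \ref{lem:dd-con-par}(b) gives $t_d(y^0)\leq \|y^0\|/\sigma_d<+\infty$. In particular the admissible values $\tau\in(t,t_d(y^0))$ lie in a bounded interval, so the affine right-hand sides $A^\top y^0-(\tau-1)c$ appearing in \eqref{eq:bound-2} form a bounded subset of $\range(A^\top)$.

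Next, suppose for contradiction that $B_{d,t}=+\infty$. Then there is a sequence $\{y^k\}\subset D_*$ with $\|y^k\|\to+\infty$ and parameters $\tau_k\in(t,t_d(y^0))$ satisfying $A^\top y^k=A^\top y^0-(\tau_k-1)c$. Passing to a subsequence, I would set $d:=\lim_k y^k/\|y^k\|$, so $\|d\|=1$. Since $D_*$ is a closed convex cone, each $y^k/\|y^k\|$ lies in $D_*$ and hence $d\in D_*$. Dividing the affine constraint by $\|y^k\|$ and using that the right-hand sides stay bounded while $\|y^k\|\to+\infty$, I get $A^\top d=\lim_k (A^\top y^0-(\tau_k-1)c)/\|y^k\|=0$. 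Thus $d$ is a nonzero element of $D_*$ lying in the kernel of $A^\top$.

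Finally I would invoke strict unboundedness directly: there is $h\in\R^n$ with $Ah\in\inte(\rec(D))$. Since $d\in D_*$ lies, by \eqref{eq:leg-conj-2}, in the polar of $\rec(D)$, and since $A^\top d=0$ gives $\langle d,Ah\rangle=\langle A^\top d,h\rangle=0$, the vector $d$ annihilates the interior point $Ah$ of $\rec(D)$. A standard interiority argument then forces $d=0$: for every $v$ of small enough norm we have $Ah+v\in\rec(D)$, so $\langle d,Ah+v\rangle\leq 0$, i.e. $\langle d,v\rangle\leq 0$ for all such $v$, whence $d=0$. This contradicts $\|d\|=1$ and completes the proof.

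The routine parts are the normalization and the boundedness of the right-hand sides; the conceptual crux is the last step, where strictness of unboundedness (the interiority $Ah\in\inte(\rec(D))$, as opposed to merely $Ah\in\rec(D)$) is exactly what upgrades the polarity inequality $\langle d,Ah\rangle\leq 0$ into the equality that forces $d=0$. This mirrors the role played by the self-concordant-barrier interiority argument \eqref{eq:lem:dd-16-3} in the infeasibility analogue, Lemma \ref{lem:dd-16}, and I expect the only point requiring care is ensuring $\inte(\rec(D))\neq\emptyset$, which is guaranteed precisely by the strict-unboundedness hypothesis. Note that the other part of that hypothesis, $\langle c,h\rangle<0$, is not needed for this boundedness conclusion.
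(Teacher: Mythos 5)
Your proposal is correct and follows essentially the same route as the paper: the paper's proof also takes $h$ with $Ah\in\inte(\rec(D))$, observes that every nonzero $y\in D_*$ satisfies $\langle y,Ah\rangle<0$ so that $D_*\cap\ker(A^\top)=\{0\}$, and concludes boundedness of the constrained set. You merely make explicit two steps the paper leaves implicit — the finiteness of $t_d(y^0)$ (hence boundedness of the right-hand sides) and the normalized-sequence/recession-direction argument turning $D_*\cap\ker(A^\top)=\{0\}$ into boundedness — which is a faithful elaboration rather than a different proof.
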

\begin{proof}
In view of the definition of strict unboundedness, take $h \in \R^n$ such that $Ah \in \inte(\rec(D))$.  By definition of $D_*$ in \eqref{eq:leg-conj-2}, for every $y \in D_*$ we have $y^\top Ah <0$. Hence,  the intersection of $D_*$ with the kernel of $A^\top$ is the zero vector.  Therefore, the set of $y$ vectors in $ D_*$ which satisfy $A^\top y=A^\top y^0 -(\tau-1)  c$ for a $\tau \in (t, t_d(y^0))$ is bounded.
\end{proof}
For the case of strict unboundedness, consider a point $(x(\mu), \tau(\mu), y(\mu))$ on the central path for parameter $\mu$.  For a fixed $\mu>0$, let  $\bar x(\mu)$ be the unique solution of the following problem:
\begin{eqnarray} \label{eq:dd-outcome-26}
	\begin{array}{cr}
		\min & \frac{\mu}{\tau(\mu)}\Phi(Ax)-\langle A^\top y^0+c, x \rangle\\
		 \textup{s.t.} & \langle c,x \rangle \leq \langle c, x(\mu) \rangle,
	\end{array}
\end{eqnarray}
and  $\bar y(\mu) := \frac{\mu}{\tau(\mu)}\Phi'(A\bar x(\mu))$. To show \eqref{eq:dd-outcome-26} has an optimal solution, we must prove that the s.c.\ function $\frac{\mu}{\tau(\mu)}\Phi(Ax)-\langle A^\top y^0+c, x \rangle$ is bounded from below \cite{interior-book}-Theorem 2.2.3. The strict unboundedness implies that there exists $\hat y \in \inte D_*$ such that $A^\top y^0+c=A^\top \hat y$. Then, by Fenchel-Young inequality:
\[
\frac{\mu}{\tau(\mu)}\Phi(Ax)-\langle A^\top y^0+c, x \rangle  \geq -\frac{\mu}{\tau(\mu)}\Phi_*\left(\frac{\tau(\mu)}{\mu} \hat y \right). 
\]
By the first order optimality condition for \eqref{eq:dd-outcome-26}, we have 
\[
A^\top \bar y(\mu) = A^\top y^0+c - \bar t(\mu)c,
\]
for some $\bar t(\mu) \geq 0$. Note that $\bar t(\mu)=0$ if  $\langle c,\bar x(\mu) \rangle < \langle c, x(\mu) \rangle$.  Then, by using property \eqref{property-ineq-1-2}, we have (with $u(\mu):=Ax(\mu)+\frac{1}{\tau(\mu)} z^0$)
\begin{eqnarray} \label{eq:dd-outcome-27}
	\frac{\|u(\mu) - A \bar x (\mu)\|^2_{\Phi''(u(\mu))}}{1+\|u(\mu) - A \bar x (\mu)\|_{\Phi''(u(\mu))}} &\leq& \frac{\tau(\mu)}{\mu}\langle y(\mu)-\bar y(\mu), Ax(\mu)+\frac{1}{\tau(\mu)} z^0 - A \bar x (\mu) \rangle  \nonumber \\
	&=& -\frac{\tau(\mu)(\tau(\mu)-\bar t(\mu))}{\mu} \langle c , x(\mu)-\bar x(\mu) \rangle + \frac{1}{\mu}  \langle y(\mu)-\bar y(\mu), z^0 \rangle  \nonumber  \\
	&\leq&  \frac{1}{\mu}  \langle y(\mu)-\bar y(\mu), z^0 \rangle.
\end{eqnarray}
To explain the last inequality, if  $\langle c , x(\mu)-\bar x(\mu) \rangle=0$, we have equality, otherwise, $\bar t(\mu)=0$ and inequality holds as $\bar x(\mu)$ is feasible for \eqref{eq:dd-outcome-26}. 
Using Lemma \ref{lem:bound-2}, we get our strict unboundedness detector.

\begin{proposition} \label{prop:strict-unb} (Strict primal unboundedness detector) Assume that the primal problem \eqref{main-p} is strictly unbounded. Then, for every $\epsilon>0$, after running PtPCA algorithm at most 
\begin{eqnarray} \label{eq:prop:strict-unb}
O \left ( \sqrt{\vartheta}  \ln \left (\frac{1}{\vartheta \epsilon} t_d(y^0)+ t_d(y^0) B_{d,0} \right )\right)
\end{eqnarray}
iterations, the projection of $u:= Ax+\frac{1}{\tau} z^0$ for the current point $(x,\tau,y)$ with respect to the norm defined by $\Phi''(u)$ into the set $\{z: z=Ax, \  \langle c, x \rangle \leq -1/\epsilon\}$ yields a point $\bar x \in \R^n$ such that  $A \bar x \in \inte(D)$ and $\langle c, \bar x \rangle \leq -\frac{1}{\epsilon}$. 
\end{proposition}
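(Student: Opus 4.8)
The plan is to mirror the structure of the strict infeasibility detector (Proposition \ref{prop:strict-infea}): I would run PtPCA until the central-path parameter $\mu$ is large enough that two things hold simultaneously — the objective along the path has already fallen below the threshold $-1/\epsilon$, and the auxiliary minimizer $\bar x(\mu)$ of \eqref{eq:dd-outcome-26} sits deep inside the unit Dikin ellipsoid of the current iterate — and then realize the \emph{exact} certificate by a single projection. The point of the projection is precisely that the weak detector only delivers $\langle c,x\rangle<-1/\epsilon$ with $Ax+\frac1\tau z^0\in\inte D$ (so $Ax$ itself need not be feasible), whereas here we must produce a genuinely feasible $\bar x$ with $A\bar x\in\inte D$.

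First I would control the reference point $\bar x(\mu)$. Lemma \ref{lem:bound-2} gives $B_{d,0}<+\infty$ under strict unboundedness. Both $y(\mu)$ and $\bar y(\mu)=\frac{\mu}{\tau(\mu)}\Phi'(A\bar x(\mu))$ lie in $D_*$ and satisfy an equation of the form $A^\top(\cdot)=A^\top y^0-(\tau-1)c$ with parameter $\tau(\mu)$, resp.\ $\bar t(\mu)$, in $(0,t_d(y^0))$ (the latter from the first-order condition recorded just before \eqref{eq:dd-outcome-27}), so the definition of $B_{d,0}$ bounds $\|y(\mu)\|,\|\bar y(\mu)\|\le B_{d,0}$. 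Feeding this and Cauchy--Schwarz \eqref{eq:CS} into the right-hand side of \eqref{eq:dd-outcome-27} yields
\[
\frac{\|u(\mu)-A\bar x(\mu)\|^2_{\Phi''(u(\mu))}}{1+\|u(\mu)-A\bar x(\mu)\|_{\Phi''(u(\mu))}}\ \le\ \frac{2B_{d,0}\|z^0\|}{\mu},
\]
so that once $\mu$ exceeds a fixed multiple of $B_{d,0}\|z^0\|$ the local-norm distance $\|u(\mu)-A\bar x(\mu)\|_{\Phi''(u(\mu))}$ is below, say, $0.1$.

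Next I would pass from the exact central-path point to the computed $\kappa$-close iterate $(x,\tau,y)$, exactly as in the proof of Proposition \ref{prop:strict-infea}: choosing $\kappa$ small enough that $\kappa$-closeness \eqref{eq:lem:dg-bound-1-2} forces $u=Ax+\frac1\tau z^0$ to lie within local-norm $0.1$ of $u(\mu)$ (and, by self-concordance, the Hessians $\Phi''(u)$ and $\Phi''(u(\mu))$ to be comparable), the triangle inequality gives $\|u-A\bar x(\mu)\|_{\Phi''(u)}<1$. Meanwhile, by the weak-detector analysis of Section \ref{sec:weak} (inequality \eqref{eq:epsilon-cert-4} and the iteration count \eqref{eq:dd-outcome-25-2}) I run long enough that $\langle c,x(\mu)\rangle\le-1/\epsilon$; since $\bar x(\mu)$ is feasible for \eqref{eq:dd-outcome-26} we get $\langle c,\bar x(\mu)\rangle\le\langle c,x(\mu)\rangle\le-1/\epsilon$, so $A\bar x(\mu)$ lies in the target set $S:=\{z=Ax':\ \langle c,x'\rangle\le-1/\epsilon\}$.

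Finally I would conclude. The projection $A\bar x$ of $u$ onto $S$ in the $\Phi''(u)$-norm is the minimizer over $S$, so $\|u-A\bar x\|_{\Phi''(u)}\le\|u-A\bar x(\mu)\|_{\Phi''(u)}<1$ because $A\bar x(\mu)\in S$; hence $A\bar x$ lies in the open unit Dikin ellipsoid of $u\in\inte D$, which is contained in $\inte D$, giving $A\bar x\in\inte D$, while $\langle c,\bar x\rangle\le-1/\epsilon$ follows from membership in $S$ and the injectivity of $A$. The iteration count comes from combining the two lower bounds on $\mu$ — $\mu\gtrsim B_{d,0}\|z^0\|$ for the ellipsoid step and the threshold requirement from \eqref{eq:dd-outcome-25-2} — with the geometric growth $\mu\ge\exp(\frac{\gamma}{\sqrt\vartheta}N)$ of Theorem \ref{thm:complexity-result}, which converts both into the single bound \eqref{eq:prop:strict-unb}. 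The main obstacle I anticipate is the transfer step: verifying quantitatively that $\kappa$-closeness (a proximity condition stated in the $\Phi_*''$ metric at $\frac{\tau}{\mu}y$) genuinely places $u$ within the required local distance of $u(\mu)$, so that both the Dikin-ellipsoid containment and the feasibility $A\bar x(\mu)\in S$ survive the passage from the ideal central path to the computed iterate.
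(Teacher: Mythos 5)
Your proposal is correct and follows essentially the same route as the paper's proof: the shadow point $\bar x(\mu)$ from \eqref{eq:dd-outcome-26} bounded via \eqref{eq:dd-outcome-27} and $B_{d,0}$, the transfer to the $\kappa$-close iterate, the Dikin-ellipsoid argument for the projection, and the combination with the weak-detector count \eqref{eq:dd-outcome-25-2}. You in fact make explicit a step the paper leaves implicit — that the projection of $u$ onto the target set lands inside the unit Dikin ellipsoid \emph{because} $A\bar x(\mu)$ itself belongs to that set and the projection can only be closer — which is exactly the right way to justify the paper's "using the Dikin ellipsoid property" remark.
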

\begin{proof}
Using inequality \eqref{eq:dd-outcome-27} and the definition of $B_{d,t}$ in \eqref{eq:bound-2}, we have
\begin{eqnarray} \label{eq:dd-outcome-27-2}
\frac{\|u(\mu) - A \bar x (\mu)\|^2_{\Phi''(u)}}{1+\|u(\mu) - A \bar x (\mu)\|_{\Phi''(u)}} \leq  \frac{2}{\mu}  B_{d,0} \|z^0\|.
\end{eqnarray}
Thus, for every scalar $\delta \in (0,1)$, if $\mu \geq \frac{2}{\delta} B_{d,0} \|z^0\|$, then $\|u - A \bar x (\mu)\|_{\Phi''(u)}\leq \delta/(1-\delta)$.  On the other hand, when our current point $(x,\tau,y)$ is $\kappa$-close to the central path, $\|u-u(\mu)\|_{\Phi''(u)}$ is sufficiently smaller than 1, where $u:= Ax +\frac 1\tau z^0$. Therefore, when $\mu$ is large enough, the projection of $u$ with respect to the norm defined by $\Phi''(u)$ into the set $\{z: z=Ax, \  \langle c, x \rangle \leq -\frac{1}{\epsilon}\}$ is in $\inte D$, using the Dikin ellipsoid property of s.c.\ functions.  Also note that after at most the number of iterations given in \eqref{eq:dd-outcome-25-2}, we get a point $x$ with $\langle c,x \rangle \leq -\frac{1}{\epsilon}$. Putting these two together, we get the statement of the proposition.  
\end{proof}
\begin{remark}
Let $D=K-b$ for $K$ a convex cone. Then, $\rec(D)=K$ and we have $D_*=K_*:=-K^*$, where $K^*$ is the dual cone of $K$ we defined before. 
Assume that $K_*$ is equipped with a $\vartheta$-LH.s.c.\ barrier  $\hat \Phi_*$ and that the system $A^\top y=-c$, $y \in K_*$, is strictly infeasible. Then,  for a given $y^0 \in \inte K_*$, the infeasibility measure in \cite{infea-2} is defined as
\begin{eqnarray} \label{eq:trict-infea-comp-1-u}
\rho_p:=\max\{\alpha: y^0-\alpha {\hat y} \in K_*, 1-\alpha {\hat\tau} \geq 0\},
\end{eqnarray}
where $({\hat y}, {\hat \tau})$ is the optimal solution of 
\begin{eqnarray} \label{eq:trict-infea-comp-2-u}
\min \{\hat \Phi_*(y)-\ln( \tau) : A^\top y+ \tau c= A^ \top y^0+c, y \in K_*,  \tau \geq 0\}. 
\end{eqnarray}
It is proved in \cite{infea-2} that $\rho_p \leq 1$. By \cite{infea-2}-Lemma 4, for every feasible solution $(y,\tau)$ of problem \eqref{eq:trict-infea-comp-2-u}, we have
\begin{eqnarray} \label{eq:trict-infea-comp-3-u}
\|y-y^0\|^*_{\hat\Phi_*''(y^0)}+(\tau-1)^2 \leq \frac{\vartheta (\vartheta+1)}{\rho_p}. 
\end{eqnarray}
This inequality shows that $t_d(y^0)$ and $B_{d,0}$ are $O(\frac{\vartheta^2}{\rho_p})$ and so for the last term in the complexity bound \eqref{eq:prop:strict-unb} we have
\begin{eqnarray} \label{eq:trict-infea-comp-5-u}
O \left ( \sqrt{\vartheta}  \ln \left ( t_d(y^0)  B_{d,0} \right )\right)= O\left ( \sqrt{\vartheta}  \ln \left(\frac{\vartheta}{\rho_p}\right) \right).
\end{eqnarray}
Since $D=K-b$ and $\rec(D)=K$, for every $z\in D$ we have $z+b \in \rec(D)$.  We claim that the point $A \bar x$ in the statement of  Proposition \ref{prop:strict-unb} lies in $K$ (as well as $D=K-b$) in at most the same number of iterations as \eqref{eq:trict-infea-comp-5-u}, which gives an exact certificate of unboundedness. To prove this, we can use another shadow sequence $\hat x(\mu)$ as the optimal solution of \eqref{eq:dd-outcome-26} when we replace $\Phi(Ax)=\hat \Phi(Ax+b)$ with $\hat \Phi(Ax)$. Then, we can get a similar inequality as \eqref{eq:dd-outcome-27} and follow the same argument. This means that in the conic case, our detector returns an exact certificate of unboundedness in $O\left ( \sqrt{\vartheta}  \ln \left(\frac{\vartheta}{\rho_p}\right) \right)$ iterations, similar to \cite{infea-2}.
\end{remark}
\section{Ill-conditioned problems} \label{sec:ill}
The study of ``ill-posed" problems in \cite{infea-2} is restricted to a special case that $\hat \tau$ is negative, which provides a weak infeasibility detector. In this section, we see how the PtPCA algorithm performs when the problem instance is ill-conditioned (closed to be ill-posed). 


We say that problem \eqref{main-p} is $\epsilon$-feasible if there exists a point $(x,\tau,y) \in Q_{DD}$ such that $\frac{1}{\epsilon} \leq \tau$. 
Consider the case that both primal and dual problems are feasible, but the duality gap $\Lambda \neq0$. Let $ \bar x \in \mathcal F_p$ and $ \bar y \in \mathcal F_d$, with duality gap equal to $\langle c, \bar x \rangle + \delta_*(\bar y|D)=\bar \Lambda \geq \Lambda$. Using \eqref{eq:dd-outcome-3}, for every point $(x,\tau,y)$ $\kappa$-close to the central path, 
we have
\begin{eqnarray} \label{eq:dd-ill-1}
\frac{\mu}{\tau^2} \leq  \frac{1}{(\xi-1)\vartheta-\kappa \sqrt{\vartheta}} \left(\frac{\xi \vartheta + \langle y^0- \bar y, z^0-A \bar x  \rangle}{\tau_{\xi,\kappa}}+ \bar\Lambda \right).  
\end{eqnarray}
Therefore, we get approximately primal and dual feasible points for large values of $\mu$. 
\begin{lemma}\label{lem:feas-conv-1}
Assume that both primal and dual problems are feasible, i.e., $\mathcal F_p$ and $\mathcal F_d$ defined in \eqref{eq:feas-regions} are nonempty. Let $(x^k,\tau_k,y^k)$ be the sequence of points generated by PtPCA. Then,\\
(a) The PtPCA algorithm returns a pair of $\epsilon$-feasible primal-dual points in polynomial time. \\
(b) Both sequences $\{\langle c, x^k \rangle\}$ and $\{\frac{1}{\tau_k} \delta_* (y^k  |D)\}$ have accumulation points and for every pair of primal-dual feasible points $(x,y)$, accumulation points $ac_{p}$ of $\{\langle c, x^k \rangle\}$ and $ac_{d}$ of $\{\frac{1}{\tau_k} \delta_* (y^k  |D)\}$ satisfy
\begin{eqnarray} \label{eq:dd-ill-4-2}
-  \delta_* ( y |D) \leq ac_{p} \leq - ac_{d} \leq  \langle c,  x \rangle. 
\end{eqnarray}
\end{lemma}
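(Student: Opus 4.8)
The plan is to first pin down the asymptotics of $\tau_k$ from the boundedness of $\mu_k/\tau_k^2$, and then read off all four inequalities from the gap estimate in Lemma \ref{lem:dg-bound-1} combined with weak duality applied through the defining relations of $Q_{DD}$. For part (a), fix any $\bar x \in \mathcal F_p$ and $\bar y \in \mathcal F_d$ (these exist by hypothesis) and set $\bar\Lambda := \langle c,\bar x\rangle + \delta_*(\bar y|D)$, which is finite. Since each iterate is $\kappa$-close to the central path, \eqref{eq:dd-ill-1} applies and yields $\mu_k/\tau_k^2 \le C$, where $C$ is the constant on its right-hand side and $\mu_k := \mu(x^k,\tau_k,y^k)$. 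By Theorem \ref{thm:complexity-result} the algorithm drives $\mu_k \to +\infty$, so $\tau_k \ge \sqrt{\mu_k/C} \to +\infty$; in particular $\tau_k \ge 1/\epsilon$ as soon as $\mu_k \ge C/\epsilon^2$, and Theorem \ref{thm:complexity-result} bounds the number of iterations for this by $O(\sqrt{\vartheta}\ln(C/\epsilon^2))$, which is polynomial. This proves (a), and the fact $\tau_k\to+\infty$ will be used repeatedly below.

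The engine for part (b) is a pair of one-sided estimates obtained by testing the iterates against feasible points. Pairing $Ax^k + \frac{1}{\tau_k}z^0 \in \inte D$ with a dual feasible $\bar y$ (so $A^\top\bar y = -c$, $\bar y \in D_*$) gives $\langle c,x^k\rangle = -\langle\bar y, Ax^k\rangle \ge -\delta_*(\bar y|D) + \frac{1}{\tau_k}\langle\bar y,z^0\rangle$, while $\delta_*(y^k|D) \ge \langle y^k, A\bar x\rangle$ for a primal feasible $\bar x$, together with $A^\top y^k = A^\top y^0 - (\tau_k-1)c$, gives $\frac{1}{\tau_k}\delta_*(y^k|D) \ge -\frac{\tau_k-1}{\tau_k}\langle c,\bar x\rangle + \frac{1}{\tau_k}\langle A^\top y^0,\bar x\rangle$. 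Since $\tau_k \ge \tau_{\xi,\kappa} > 0$ by Corollary \ref{cor:bound_tau}, both right-hand sides are bounded, so each sequence is bounded below. On the other hand, the right-hand inequality of Lemma \ref{lem:dg-bound-1} simplifies, using $\xi-1-\kappa > 0$ (hence $\xi - 1 - \kappa/\sqrt\vartheta > 0$), to
\[
s_k := \langle c,x^k\rangle + \tfrac{1}{\tau_k}\delta_*(y^k|D) \ \le\ -\frac{y_{\tau,0}}{\tau_k} - \Bigl(\xi - 1 - \tfrac{\kappa}{\sqrt\vartheta}\Bigr)\frac{\mu_k\vartheta}{\tau_k^2} \ \le\ -\frac{y_{\tau,0}}{\tau_k},
\]
so $s_k$ is bounded above. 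Combining the individual lower bounds with this upper bound on $s_k$ shows each sequence is also bounded above; hence both are bounded and possess accumulation points.

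For the chain in \eqref{eq:dd-ill-4-2}, fix a feasible pair $(x,y)$. Along a subsequence realizing $ac_p$, the term $\frac{1}{\tau_k}\langle y, z^0\rangle$ in the first estimate (taken with $\bar y = y$) vanishes since $\tau_k\to+\infty$, yielding $-\delta_*(y|D) \le ac_p$; symmetrically the second estimate with $\bar x = x$ gives $-\langle c,x\rangle \le ac_d$, i.e.\ $-ac_d \le \langle c,x\rangle$. For the middle inequality, extract a common subsequence along which both $\langle c,x^k\rangle \to ac_p$ and $\frac{1}{\tau_k}\delta_*(y^k|D) \to ac_d$ (possible by boundedness); since $-y_{\tau,0}/\tau_k \to 0$, the displayed upper bound forces $ac_p + ac_d = \lim s_k \le 0$, that is $ac_p \le -ac_d$. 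Assembling the three pieces gives \eqref{eq:dd-ill-4-2}.

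I expect the middle inequality $ac_p \le -ac_d$ to be the delicate step. When the duality gap is positive the two objective sequences need not converge, so the statement must be understood for accumulation points taken along a common subsequence (the two outer inequalities, by contrast, hold for any accumulation point individually); it is precisely the nonnegative coefficient $(\xi - 1 - \kappa/\sqrt\vartheta)\mu_k\vartheta/\tau_k^2$ in Lemma \ref{lem:dg-bound-1} that makes $\limsup_k s_k \le 0$ and pins the two limits against each other. The only other point needing care is that the $O(1/\tau_k)$ remainders in the testing estimates and in the bound on $s_k$ are genuinely negligible, which is exactly where $\tau_k \to +\infty$ from part (a), rather than the mere lower bound $\tau_k \ge \tau_{\xi,\kappa}$, is indispensable.
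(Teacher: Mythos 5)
Your proof is correct and follows essentially the same route as the paper: part (a) from the bound \eqref{eq:dd-ill-1} together with Theorem \ref{thm:complexity-result}, and part (b) from the two weak-duality testing inequalities (the paper's \eqref{eq:dd-ill-4-12} and \eqref{eq:dd-ill-4-14}) combined with the gap bound of Lemma \ref{lem:dg-bound-1}, passing to the limit via $\tau_k\to+\infty$. Your additions — the explicit boundedness argument for the two sequences and the remark that the middle inequality requires accumulation points along a common subsequence — are useful clarifications of points the paper leaves implicit, not a different method.
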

\begin{proof}
Part (a) is implied by \eqref{eq:dd-ill-1}. For part (b), for every $y \in D_*$ and every $k$, by definition of $\delta_*$, we have 
\begin{eqnarray} \label{eq:dd-ill-4-12}
-\delta_* ( y |D) \leq - \langle  y ,  Ax^k +\frac{1}{\tau_k} z^0 \rangle = \langle c, x^k \rangle-\frac{1}{\tau_k} \langle y,z^0 \rangle,
\end{eqnarray}
where we used $A^\top y = -c$.  The fact that $\tau_k$ tends to $+\infty$ and \eqref{eq:dg-bound-1} imply that for $k$ large enough, we have
\begin{eqnarray} \label{eq:dd-ill-4-13}
\langle c, x^k \rangle  \leq -\frac{1}{\tau_k} \delta_* (y^k  |D). 
\end{eqnarray}
By using the definition of $\delta_*$ and $ \frac{1}{\tau_k} A^\top y^k =-c+\frac{1}{\tau_k} (A^\top y^0+c)$, for every $x$ with $Ax \in D$ we have
\begin{eqnarray} \label{eq:dd-ill-4-14}
- \frac{1}{\tau_k} \delta_* (y^k  |D) \leq - \langle \frac{y^k}{\tau_k},  A  x \rangle = \langle c,  x \rangle -\frac{1}{\tau_k} \langle A^\top y^0+c, x \rangle. 
\end{eqnarray}
By sending $k$ to $+\infty$ and using the fact that $\tau_k$ tends to $+\infty$, we get the desired results.   
\end{proof}
Note that every accumulation point of $\{\langle c, x^k \rangle\}$ gives a lower bound for the objective value of every feasible point. Therefore, we immediately have the following corollary:
\begin{corollary}
Assume that both primal and dual problems are feasible and the sequence $\{x^k\}$ generated by PtPCA has an accumulation point $\bar x$. Then, $\bar x$ is optimal for the primal problem. 
\end{corollary}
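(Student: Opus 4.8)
The plan is to combine the primal feasibility of the accumulation point $\bar x$ with the lower-bound property of the accumulation points of the objective sequence that was already established in Lemma \ref{lem:feas-conv-1}. In short, Lemma \ref{lem:feas-conv-1}(b) tells us that any accumulation point of $\{\langle c, x^k\rangle\}$ is a lower bound for the primal objective over the whole feasible region; it then suffices to show that $\langle c,\bar x\rangle$ is itself such an accumulation point and that $\bar x$ is feasible.

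First I would verify that $\bar x$ is primal feasible, i.e.\ $A\bar x \in D$. Every iterate of PtPCA lies in $Q_{DD}$, so by \eqref{QDD-copy-2} we have $Ax^k + \frac{1}{\tau_k} z^0 \in \inte D$ for all $k$. Under the feasibility hypothesis one has $\tau_k \to +\infty$: indeed, \eqref{eq:dd-ill-1} bounds $\mu/\tau^2$ from above, while Theorem \ref{thm:complexity-result} drives $\mu \to +\infty$, forcing $\tau_k \to +\infty$ (this is the same fact used in the proof of Lemma \ref{lem:feas-conv-1}). Passing to the subsequence $\{x^{k_j}\}$ converging to $\bar x$, the perturbation $\frac{1}{\tau_{k_j}} z^0$ vanishes, so $Ax^{k_j} + \frac{1}{\tau_{k_j}} z^0 \to A\bar x$; since $D$ is closed, $A\bar x \in D$, that is, $\bar x \in \mathcal F_p$.

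Next, by continuity of the linear functional $\langle c,\cdot\rangle$ the same subsequence gives $\langle c, x^{k_j}\rangle \to \langle c,\bar x\rangle$, so $\langle c,\bar x\rangle$ is an accumulation point of $\{\langle c, x^k\rangle\}$. Since $\mathcal F_d$ is nonempty by hypothesis, I would pick any $y \in \mathcal F_d$ and apply the rightmost inequality in \eqref{eq:dd-ill-4-2} to this accumulation point $ac_p = \langle c,\bar x\rangle$, obtaining $\langle c,\bar x\rangle \le \langle c, x\rangle$ for every $x \in \mathcal F_p$. Combined with $\bar x \in \mathcal F_p$, this shows $\bar x$ attains the infimum of the objective over the feasible region, hence $\bar x$ is optimal.

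The argument is essentially immediate once Lemma \ref{lem:feas-conv-1} is in hand, so there is no serious obstacle. The only two points requiring care are the feasibility of $\bar x$, which hinges on $\tau_k \to +\infty$ together with the closedness of $D$, and the correct identification of $\langle c,\bar x\rangle$ as one of the accumulation points to which the lemma's lower bound applies.
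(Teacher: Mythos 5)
Your proposal is correct and follows essentially the same route as the paper, which derives the corollary immediately from Lemma \ref{lem:feas-conv-1}(b) via the observation that every accumulation point of $\{\langle c, x^k\rangle\}$ lower-bounds the objective at every feasible point. You simply make explicit the two details the paper leaves implicit — that $\bar x\in\mathcal F_p$ because $\tau_k\to+\infty$ and $D$ is closed, and that $\langle c,\bar x\rangle$ is itself an accumulation point of the objective sequence — both of which are handled correctly.
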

If both primal and dual problems are feasible, but the duality gap is not zero, we have the following proposition about the behavior of the output sequences of the algorithm. 
\begin{proposition} \label{prop:ill-1}
Assume that both primal and dual problems are feasible and  the duality gap between the primal and dual problems is $\Lambda \neq 0$. Let $(x^k,\tau_k,y^k)$ be the sequence of points generated by PtPCA. Then, we have
\[\limsup_{k\rightarrow+\infty} \max\{\|x^k\|,\|y^k/\tau_k\|\} = +\infty. \]  
\end{proposition}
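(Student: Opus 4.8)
The plan is to argue by contradiction: assume $\limsup_{k\to\infty}\max\{\|x^k\|,\|y^k/\tau_k\|\}<+\infty$, so that both sequences $\{x^k\}$ and $\{y^k/\tau_k\}$ are bounded, and derive a contradiction with $\Lambda\neq 0$. First I would record a weak-duality observation: for any $x\in\mathcal F_p$ and $y\in\mathcal F_d$ we have $\langle c,x\rangle+\delta_*(y|D)=-\langle y,Ax\rangle+\delta_*(y|D)\geq 0$, since $Ax\in D$; hence $\Lambda\geq 0$, and the hypothesis $\Lambda\neq 0$ forces $\Lambda>0$.

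Next I would establish that $\tau_k\to+\infty$. Because both problems are feasible, inequality \eqref{eq:dd-ill-1} applies to each $(x^k,\tau_k,y^k)$ and gives a uniform constant bound on $\mu(x^k,\tau_k,y^k)/\tau_k^2$; since the PtPCA algorithm drives $\mu_k\to+\infty$ (Theorem \ref{thm:complexity-result}), this bound forces $\tau_k^2\to+\infty$, i.e. $\tau_k\to+\infty$. With both $\{x^k\}$ and $\{y^k/\tau_k\}$ bounded I would pass to a subsequence along which $x^k\to x^*$ and $y^k/\tau_k\to\bar y$. Using $Ax^k+\tfrac1{\tau_k}z^0\in D$, $\tfrac1{\tau_k}z^0\to0$, and closedness of $D$ gives $Ax^*\in D$, so $x^*\in\mathcal F_p$; using $y^k/\tau_k\in D_*$, closedness of the cone $D_*$, and $A^\top(y^k/\tau_k)=\tfrac1{\tau_k}A^\top y^0-\tfrac{\tau_k-1}{\tau_k}c\to -c$ gives $\bar y\in D_*$ with $A^\top\bar y=-c$, so $\bar y\in\mathcal F_d$. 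The limiting pair is therefore primal--dual feasible, and by the weak-duality step its gap satisfies $\langle c,x^*\rangle+\delta_*(\bar y|D)\geq\Lambda>0$ (note $\delta_*(\bar y|D)<+\infty$ since $\bar y\in D_*$).

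The two sides then clash. Writing $g_k:=\langle c,x^k\rangle+\tfrac1{\tau_k}\delta_*(y^k|D)$ and invoking \eqref{eq:epsilon-cert-4} together with the standing assumption $(\xi-1)-\kappa/\sqrt\vartheta>0$, the second term on the right of \eqref{eq:epsilon-cert-4} is nonpositive while $-y_{\tau,0}/\tau_k\to0$, so $\limsup_k g_k\leq 0$. On the other hand, since $\tfrac1{\tau_k}\delta_*(y^k|D)=\delta_*(y^k/\tau_k|D)$ and the support function $\delta_*(\cdot|D)$ is lower semicontinuous while $\langle c,\cdot\rangle$ is continuous, I would get $\liminf_k g_k\geq\langle c,x^*\rangle+\delta_*(\bar y|D)\geq\Lambda>0$. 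Along the chosen subsequence $\liminf g_k\leq\limsup g_k$, so $0<\Lambda\leq\liminf_k g_k\leq\limsup_k g_k\leq 0$, a contradiction, which completes the proof.

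The main obstacle is the limit passage in the dual term $\delta_*(y^k/\tau_k|D)$: the support function is only lower semicontinuous, so one must check that this is exactly the direction needed. It is --- lower semicontinuity yields $\liminf_k\delta_*(y^k/\tau_k|D)\geq\delta_*(\bar y|D)$, which is what pushes $\liminf_k g_k$ above $\Lambda$, while the algorithmic gap bound \eqref{eq:epsilon-cert-4} pins $\limsup_k g_k$ at $0$ or below. One could equivalently route the contradiction through Lemma \ref{lem:feas-conv-1}(b), applying \eqref{eq:dd-ill-4-2} to the feasible pair $(x^*,\bar y)$ and combining the resulting $ac_d\geq\delta_*(\bar y|D)$ with $ac_p\leq-ac_d$ to obtain $\langle c,x^*\rangle+\delta_*(\bar y|D)\leq 0$.
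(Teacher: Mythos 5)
Your proposal is correct and follows essentially the same route as the paper's proof: assume both sequences are bounded, extract a primal--dual feasible accumulation pair, and use the central-path duality-gap bound (the paper cites \eqref{eq:dg-bound-1}, you use its consequence \eqref{eq:epsilon-cert-4}) to force that pair to have nonpositive gap, contradicting $\Lambda \neq 0$. You simply make explicit several steps the paper leaves implicit, namely that $\Lambda > 0$ by weak duality, that $\tau_k \to +\infty$ via \eqref{eq:dd-ill-1}, and that the limit passage in $\delta_*(y^k/\tau_k\,|\,D)$ goes through by lower semicontinuity of the support function.
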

\begin{proof}
For the sake of reaching a contradiction, assume that both $\{x^k\}$ and $\{y^k/\tau_k\}$ have accumulation points $\hat x$ and $\hat y$, which are primal and dual feasible, respectively, as $\tau_k$ tends to $+\infty$. Then, because $\tau_k$ tends to $+\infty$, \eqref{eq:dg-bound-1} implies that $\langle c,\hat x \rangle + \delta_*(\hat y|D) = 0$, which is a contradiction as the duality gap is positive. 
\end{proof}
Assume that we run the PtPCA algorithm until we get a point $(x,\tau,y) \in Q_{DD}$ $\kappa$-close to the central path with  $\mu \geq \frac{1}{\vartheta \epsilon^3}$, and the point is not detected as $\epsilon$-solution, or $\epsilon$-certificate of infeasibility or unboundedness. By the argument we had for the weak detector in Section \ref{sec:weak}, $\frac{\vartheta \mu}{\tau^2}$ is not large enough and we can see that $\tau=O(1/\epsilon)$. Therefore, problem \eqref{main-p}  is $O(\epsilon)$-feasible. We can argue that problem \eqref{main-p} is close to be ill-posed, but the exact categorization is impossible in the sense that all the ill-posed statuses we defined may lead to such an outcome. If there exists $\bar x \in \mathcal F_p$ and $\bar y \in \mathcal F_d$ with zero duality gap, then for every such pair by \eqref{eq:dd-outcome-3} we must have 
\begin{eqnarray} \label{eq:ill-cond-1}
\xi \vartheta + \langle y^0- \bar y, z^0-A \bar x  \rangle = O\left(\frac{1}{\epsilon}\right). 
\end{eqnarray}


 \section{Stopping criteria and conclusion} \label{sec:conclude}
Based on the insights we have gained by our performance analyses of the PtPCA algorithm in detecting the possible statuses for a given problem, we can discuss the stopping criteria and returned certificates by this algorithm in a practical setup.  Even though applications of interior-point methods beyond the scope of symmetric cones have been studied \cite{tunccel2001generalization,towards, skajaa-ye,myklebust2014interior, MR1953253}, there is no well-stablished software close to optimization in the Domain-Driven from. Let us review the existing stopping criteria for some well-known optimization over symmetric cones solvers (using the formulation in \eqref{intro-1}). For SDPT3 \cite{SDPT3-user-guide}, the algorithm is stopped for a given accuracy $\epsilon$ if at the current primal-dual point $(\hat z, (\hat v, \hat s))$:
\begin{enumerate}
\item an $\epsilon$-solution is obtained:
\begin{eqnarray*} \label{eq:stop-1}
\max \left\{\frac{|\langle \hat c, \hat z \rangle+ \langle \hat b, \hat v \rangle|}{1+|\langle \hat c, \hat z \rangle|+|\langle \hat b, \hat v \rangle|},  \frac{\|\hat A \hat z - \hat b\|}{1+\|\hat b\|},  \frac{\|\hat A ^\top \hat v+\hat c-\hat s\|}{1+\|\hat c\|}\right\} \leq \epsilon. 
\end{eqnarray*}
\item primal infeasibility is suggested:
\begin{eqnarray*} \label{eq:stop-2}
\frac{-\langle \hat b, \hat v \rangle}{\|\hat A ^\top \hat v-\hat s\|} > \frac{1}{\epsilon}.
\end{eqnarray*}
\item dual infeasibility is suggested:
\begin{eqnarray*} \label{eq:stop-3}
\frac{- \langle \hat c, \hat z \rangle}{\|\hat A  \hat z\|} > \frac{1}{\epsilon}.
\end{eqnarray*}
\item progress is slow, numerical problems are encountered, or the step sizes are small. 
\end{enumerate}
Freund in \cite{behavior} studied the (slightly modified) stopping criteria used by SeDuMi \cite{sedumi}, where an $\epsilon$-solution is suggested when 
\begin{eqnarray*} \label{eq:stop-8}
\frac{\max\{0, \langle \hat c, \hat z \rangle+ \langle \hat b, \hat v \rangle\}}{\max\{1,|\langle \hat c, \hat z \rangle|,|\langle \hat b, \hat v \rangle|\}}+  2\frac{\|\hat A \hat z - \hat b\|_\infty}{1+\|\hat b\|_\infty}+2  \frac{\|\hat A ^\top \hat v+\hat c-\hat s\|_\infty}{1+\|\hat c\|_\infty} \leq \epsilon. 
\end{eqnarray*}
We want to design our stopping criteria based on our analyses of different statuses of a problem in Domain-Driven formulation.  Compared to the existing stopping criteria in the literature, we  make more rigorous decisions on ill-posed problems. We define the following parameters for the current point $(x,\tau,y)$:
\begin{eqnarray} \label{eq:stop-4}
gap:=\frac{|\langle c,x\rangle+\frac{1}{\tau}\delta_*(y|D)|}{1+|\langle c,x\rangle|+|\frac{1}{\tau}\delta_*(y|D)|}, \ \ P_{feas}:= \frac{1}{\tau} \|z^0\|, \ \ D_{feas}:= \frac{\|\frac{1}{\tau} A^\top y + c\|}{1+\|c\|}. 
\end{eqnarray}
Let us consider the main statuses of a given problem:\\
\noindent {\bf Strictly primal-dual feasible or having unstable optima with dual certificate:} We return a point $x$ as an $\epsilon$-solution of the problem with an approximate certificate $y$ if $x$ and $y$ are approximately primal and dual feasible, while their duality gap is close to zero. In Section \ref{sec:solvable}, we proved that when $\mu_k:=\mu(x^k,\tau_k,y^k)$ tends to $+\infty$, $\tau_k$ also increases with a lower bound directly proportional to $\mu_k$ based on \eqref{eq:thm:fes-meas-dd-1} if the primal and dual are strictly feasible, or based on \eqref{eq:dd-outcome-3} if the problem has unstable optima with dual certificate. This implies that we have $P_{feas} \leq \epsilon$ and $D_{feas} \leq \epsilon$ in polynomial time. Also inequalities in \eqref{eq:dg-bound-1} guarantee $\langle c,x^k \rangle +\frac{1}{\tau_k}\delta_*(y^k|D) \leq \epsilon$ in polynomial time. The $gap$ defined in \eqref{eq:stop-4} is scaled, similar to many other practical software, to make the measure for duality gap scale independent. 
For the Domain-Driven algorithm, we say that an $\epsilon$-solution is obtained if 
\begin{eqnarray} \label{eq:stop-5}
\max\{ gap, P_{feas}, D_{feas}\} \leq \epsilon. 
\end{eqnarray}
\noindent {\bf Infeasible:} We studied infeasible statuses in Sections \ref{sec:weak} and \ref{sec:strict}. A point $y \in D_*$ is a certificate of infeasibility if $A^\top y=0$ and $\delta_*(y|D) <0$. We proved that if the problem is strongly or strictly infeasible, $\frac{\tau_k}{\mu_k} y^k$ becomes an $\epsilon$-certificate in polynomial time. We suggest that the problem is infeasible if for the current point $(x,\tau,y)$ we have
\begin{eqnarray} \label{eq:stop-6}
\frac{\tau}{\mu} \|A^\top y\| \leq \epsilon, \ \ \ \frac{\tau}{\mu} \delta_*(y|D) < 0. 
\end{eqnarray}
\noindent {\bf Unbounded:} We studied unbounded statuses in Sections \ref{sec:weak} and \ref{sec:strict}. Note that our approach here is different from many others for conic optimization. Instead of dual infeasibility, we return the unboundedness of the primal problem.  We suggest that the problem is unbounded if 
\begin{eqnarray} \label{eq:stop-7}
\langle c, x \rangle \leq - \frac{1}{\epsilon}, 
\end{eqnarray}
which can be done if the problem is strictly or strongly unbounded. \\
\noindent {\bf Ill-posed:} We studied these problems in Section \ref{sec:ill}. By our results, we know that if both primal and dual problems are feasible, the PtPAC algorithm eventually returns $\epsilon$-feasible solutions. Lemma \ref{lem:feas-conv-1} shows that $-\frac{1}{\tau_k} \delta_* (y^k  |D)$ gives the best estimate of optimal objective value. Our discussion in Section \ref{sec:ill} gives an idea how large $\mu$ should be in ill-posed cases so we can infer useful information about the problem. 

{\bf Stopping Criteria:} For a given tolerance $\epsilon$, run the algorithm until for the current point $(x,\tau,y)$ we have one of \eqref{eq:stop-5}, \eqref{eq:stop-6}, or \eqref{eq:stop-7}, or $\mu(x,\tau,y) \geq \frac{1}{\vartheta \epsilon^3}$. 

\begin{enumerate}
\item If \eqref{eq:stop-5} happens, return $(x,\frac{y}{\tau})$ as an $\epsilon$-solution. 
\item If \eqref{eq:stop-6} happens, return $\frac{\tau}{\mu} y$ as an $\epsilon$-certificate of infeasibility. 
\item If \eqref{eq:stop-7} happens, return $x$ as an $\epsilon$-certificate of unboundedness. 
\item If $\mu(x,\tau,y) \geq \frac{1}{\vartheta \epsilon^3}$ happens, then both primal and dual problems are approximately feasible ($\epsilon$-perturbations of the problems are feasible):
\begin{itemize}
\item $x$ and $\frac{y}{\tau}$ are $\epsilon$-feasible points for the primal and dual problems, respectively. 
\item $-\frac{1}{\tau}\delta_*(y|D)$ is the closest estimate to the optimal objective value. 
\end{itemize}
\end{enumerate}

\renewcommand{\baselinestretch}{1}
\bibliographystyle{siam}
\bibliography{References}

\end{document}